\documentclass[11pt]{amsart}

\usepackage{amsmath,amssymb,amsthm}
\usepackage[margin=1.08in]{geometry}
\usepackage[pdftex,colorlinks,citecolor=black,linkcolor=black,urlcolor=black,
bookmarks=true]{hyperref}
\usepackage{microtype}
\usepackage{setspace}

\newtheorem{theorem}{Theorem}[section]
\newtheorem{proposition}[theorem]{Proposition}
\newtheorem{lemma}[theorem]{Lemma}
\newtheorem{corollary}[theorem]{Corollary}
\numberwithin{equation}{section}
\theoremstyle{definition}
\newtheorem{definition}[theorem]{Definition}
\newtheorem{example}[theorem]{Example}
\theoremstyle{remark}
\newtheorem{remark}[theorem]{Remark}

\title[The Delsarte bound for codes and spheres]
{The Rate of the Delsarte Bound for Binary Codes and Spheres}
\author{Andrew Salmon}
\address{Independent researcher}
\subjclass[2020]{Primary 94B65; Secondary 52C17, 90C05, 31A15, 33E05}
\date{October 13, 2026}

\begin{document}

\begin{abstract}
  We give the asymptotic rate of Delsarte's linear programming bound for binary codes and spherical codes.  This extends previous work of the author which formulated and proved an analogue of the conjecture of Afkhami-Jeddi, Cohn, Hartman, de Laat, and Tajdini for binary codes, obtaining the rate of the Delsarte linear program at distance $1/2-1/\pi$.

  The lower bounds introduce ideas from potential theory into the study of Delsarte's bounds, whereby the measures underlying a system of orthogonal polynomials give an obstacle problem.  We solve the resulting obstacle problem in the case of binary codes and spherical codes, obtaining a closed form for the lower bound in terms of elliptic integrals.  For the upper bounds, we relate the $N$-qubit construction for binary codes used in previous work of the author to the constructions in OpenAI's Chapter 2 for spherical codes and observe that they both involve interlacing spectra.  This leads to a variational problem involving measures constructed from these spectra under the Markov-Krein correspondence.  This variational problem admits an exact solution and matches the lower bound.
\end{abstract}

\maketitle
\begingroup
\small
\tableofcontents
\endgroup

\section{Introduction}

Binary codes are subsets of $\{0,1\}^n$ whose distinct words have Hamming
distance at least $d\in\{1,\ldots,n\}$.
Spherical codes are subsets of $S^{n-1}$ whose distinct points have inner
product at most $s\in[-1,1)$.
Delsarte's linear programming bounds for binary codes~\cite{Delsarte1973} and spherical
codes~\cite{DGS1977} give upper bounds on the size of the largest codebook; see
\cite{DelsarteLevenshtein1998} for the association-scheme framework.
For decades, the best general asymptotic (high-dimensional) upper bounds were due to
McEliece, Rodemich, Rumsey, and Welch~\cite{MRRW1977} and Kabatiansky
and Levenshtein~\cite{KL1978}, respectively, which arise from feasible
solutions to the Delsarte linear program.
Recently, Chapter~2 of OpenAI's \emph{Ten Advances in Mathematics and
Theoretical Computer Science} improved both bounds~\cite{OpenAI2026}.
Related binary coding upper bounds were developed using classical-quantum
channels by Alrabiah and Guruswami~\cite{AG2026}.

The asymptotic limit of linear programming bounds has been the subject of study by many authors over the years since the classic work from the 1970s, and they give the best-known asymptotic bounds for both binary codes and spherical codes.
Barg and Jaffe~\cite{BargJaffe2001} studied the binary
program numerically at block length $1000$, testing the closeness of the
known bounds.  Kalai asked about the asymptotic value~\cite[Questions~3--4]{Kalai2024}.
Samorodnitsky~\cite{Samorodnitsky2001} and Navon and
Samorodnitsky~\cite{NavonSamorodnitsky2005} established lower bounds on its
binary asymptotic rate, and Samorodnitsky~\cite{Samorodnitsky2004} treated the
spherical case, while Torquato and Stillinger~\cite{TorquatoStillinger2006}
adapted Samorodnitsky's method to the sphere packing context.
Afkhami-Jeddi, Cohn, Hartman, de Laat, and Tajdini~\cite{ACHT2020}
studied the sphere-packing linear programming bound numerically via the
modular bootstrap, with extrapolations indicating an exponential improvement
over the Kabatiansky--Levenshtein bound.
They conjectured that the optimal bound for sphere-packing implied by the Cohn--Elkies linear program in dimension $n$ is $(e/(2\pi))^{n/2+o(n)}$~\cite[Conjecture~3.2]{ACHT2020}.  This was subsequently proved in~\cite[Chapter~1, Theorem~1.1]{OpenAI2026}.
Here, we determine the exact asymptotic rates of the
binary and spherical Delsarte programs.

Write $D_2(n,d)$ and $D_{\mathbb S}(n,s)$ for the optima of the binary and
spherical Delsarte programs defined in Section~\ref{sec:programs}.  These linear programs
bound the sizes of the corresponding codes.  For $0<\delta\leq1$ and
$-1\leq s<1$, put
\begin{align*}
 \underline R_D(\delta)
   &=\liminf_{n\to\infty}\frac1n\log_2D_2(n,\lceil\delta n\rceil),&
 \overline R_D(\delta)
   &=\limsup_{n\to\infty}\frac1n\log_2D_2(n,\lceil\delta n\rceil),\\
 \underline R_D^{\mathbb S}(s)
   &=\liminf_{n\to\infty}\frac1n\ln D_{\mathbb S}(n,s),&
 \overline R_D^{\mathbb S}(s)
   &=\limsup_{n\to\infty}\frac1n\ln D_{\mathbb S}(n,s).
\end{align*}

The answer in both cases is parametrized by the complete elliptic integrals
of the first and second kinds,
\[
 K(\kappa)=\int_0^{\pi/2}\frac{dt}{\sqrt{1-\kappa^2\sin^2t}},
 \qquad
 E(\kappa)=\int_0^{\pi/2}\sqrt{1-\kappa^2\sin^2t}\,dt.
\]
For $0<\kappa<1$, put $\kappa'=\sqrt{1-\kappa^2}$,
$K=K(\kappa)$, $E=E(\kappa)$, $K'=K(\kappa')$, and $E'=E(\kappa')$;
the primes denote the complementary modulus.  Define
\[
 \delta_+(\kappa)=\frac12-\frac E\pi,
 \qquad
 \delta_-(\kappa)=\frac12-\frac{E-\kappa'^2K}{\pi\kappa},
 \qquad
 s(\kappa)=\frac{K-E}{\kappa K},
\]
and
\[
 B(\kappa)=\frac14\ln\kappa+\frac{\kappa'^2KK'}{2\pi}.
\]
The binary and spherical rate functions are the functions $\mathcal J$ and
$F_{\mathrm S}$ given by
\begin{align}
 \mathcal J\bigl(\delta_\pm(\kappa)\bigr)
   &=\frac12\pm\frac{B(\kappa)}{\ln2},\label{eq:binary-profile}\\
 F_{\mathrm S}\bigl(s(\kappa)\bigr)
   &=\frac12+\frac12\ln\frac K\pi+\frac14\ln\kappa
     +\frac{K\bigl((1+\kappa^2)K'-2E'\bigr)}{2\pi}.\label{eq:spherical-profile}
\end{align}
The branches $\delta_+$ and $\delta_-$ cover
$(0,\frac12-\frac1\pi)$ and $(\frac12-\frac1\pi,\frac12)$,
respectively, meeting at $\mathcal J(\frac12-\frac1\pi)=\frac12$.
The parameter $s(\kappa)$ covers $(0,1)$.

\begin{theorem}[Main result]\label{thm:main}
For $0<\delta<\frac12$ and $0<s<1$,
\begin{align*}
 \underline R_D(\delta)=\overline R_D(\delta)
   &=R_D(\delta)=\mathcal J(\delta),\\
 \underline R_D^{\mathbb S}(s)=\overline R_D^{\mathbb S}(s)
   &=R_D^{\mathbb S}(s)=F_{\mathrm S}(s).
\end{align*}
\end{theorem}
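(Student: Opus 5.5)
The proof splits into two matching halves: a lower bound on the optimum valid for every $n$ up to $e^{o(n)}$, and an explicit sequence of dual-feasible (Delsarte) certificates whose value matches it. Both halves are organized around one limiting object, the equilibrium-type measure associated with the orthogonal polynomials of the scheme. These are the Krawtchouk polynomials $K_k(x)$ with $k/n\to$ fixed, rescaled to $x/n\in[0,1]$, for binary codes. For spheres they are the Gegenbauer polynomials $P_k^{((n-3)/2)}$ with $k/n$ fixed. I would first record the standard large-$n$ asymptotics: $\frac1n\ln|K_k(\xi n)|$ and the corresponding Gegenbauer expressions converge to logarithmic potentials of the zero-counting measures. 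These measures are constrained above by the orthogonality measure (binomial, or $(1-t^2)^{(n-3)/2}dt$), and that upper constraint is exactly where the obstacle problem comes from.

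\emph{Lower bound.} I would pass to the primal Delsarte program, namely nonnegative distance distributions $A_i$ supported on $\{0\}\cup[d,n]$ with nonnegative MacWilliams transform. The key step is to build feasible primal points from measures: take a probability measure $\mu$ on the forbidden-free region whose "dual" transform is nonnegative, e.g. products of squares of orthogonal polynomials in the spirit of Samorodnitsky and Navon--Samorodnitsky. Its value is then governed, to exponential order, by the minimum of a logarithmic energy over measures lying below the obstacle. I would then solve this obstacle problem explicitly. The support splits into a saturated band, where the measure equals the obstacle, and a free band on an interval $[a,b]$ whose density has a square-root-of-quartic form. Writing the density through $\sqrt{(x-a)(x-b)(x-c)(x-e)}$ produces elliptic integrals with modulus $\kappa$. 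The endpoint conditions (total mass, and $a$ matching $\delta$ or $s$) become $\delta_\pm(\kappa)$ and $s(\kappa)$, and the energy evaluates to $\tfrac12\pm B(\kappa)/\ln2$ and $F_{\mathrm S}$ via Legendre's relation $EK'+E'K-KK'=\pi/2$. The two branches $\delta_\pm$ should correspond to whether the free band sits to the left or right of the saturated region, with the transition at $\delta=\tfrac12-\tfrac1\pi$, i.e. $\kappa\to1$ on the $\delta_+$ side. This is consistent with the previously established rate $1/2$ at that distance.

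\emph{Upper bound.} For the upper bound I would use the $N$-qubit / interlacing-spectra constructions. The binary construction from my earlier work and the spherical construction of OpenAI's Chapter~2 both produce a dual-feasible polynomial $f=\sum f_k K_k$ with $f\le0$ on $[d,n]$ and $f_k\ge0$. In both, the roots of $f$ interlace with a second spectrum. The Markov--Krein correspondence turns the pair of interlacing spectra into a probability measure $\tau$, and $f(0)/f_0$ becomes an explicit functional of $\tau$ and its Markov--Krein transform. Optimizing over $\tau$ gives a variational problem. I would show that its Euler--Lagrange equation is the same quartic-square-root equation as before, so its extremal value coincides with the obstacle-problem energy. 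Finally, finite-$n$ discretization (choosing interlacing integer-point spectra approximating $\tau$) loses only $o(n)$ in the exponent. This gives $\overline R_D\le\mathcal J\le\underline R_D$, and the same for spheres.

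The main obstacle I expect is the exact identification of the two variational values. Solving the obstacle problem in closed form is an elliptic-integral computation, but proving that the Markov--Krein variational optimum equals it requires a duality or complementary-slackness argument at the continuum level. Concretely, one needs that the optimal $\tau$'s transform saturates the obstacle exactly on the band where the equilibrium measure does. I would attack this first by checking that the candidate $\tau$ built from the elliptic solution satisfies the variational inequalities, with the sign conditions verified via monotonicity in $\kappa$ of the relevant potentials.
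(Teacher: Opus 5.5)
The two-sided architecture is right, and the upper-bound ingredients you name (the $N$-qubit and moving-subspace constructions, interlacing spectra, Markov--Krein) are the ones the paper uses. But the lower bound, which carries most of the proof, has no working mechanism. You propose to build primal feasible distance distributions from measures. You then assert that their total mass is governed, to exponential order, by a constrained logarithmic-energy minimum. Neither step is supplied. You do not say which primal points are sharp, why their MacWilliams transforms stay nonnegative, or why their value should be an energy. The constructions you cite as models give strictly weaker rates: at $\delta=\frac12-\frac1\pi$ they give $0.4147$ and $0.3880$ instead of $\frac12$. The obstacle itself is also misplaced. An upper constraint on zero distributions comes from \emph{discrete} orthogonality: the constraint is the lattice counting measure, and the binomial weight enters as an external field. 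Gegenbauer zeros obey no such constraint, so for spheres your ``saturated band'' has nothing to come from.

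The paper's lower bound is instead a dual-side argument against every feasible $g$ at once. The normalized log-modulus $v_g$ of $\widehat g(\lambda)=\int g(t)e^{n\lambda t}\,d\nu(t)$ is subharmonic. It is bounded by $\mathsf h$ (from the support of $\nu$) and by $R(g)+\Xi_n$ (from $a_j\ge0$ and the bounds on $|\beta_j|$), so it lies below the Perron envelope $V_{n,R}$. A tilted Plancherel/Cauchy--Schwarz estimate then bounds $\int_{t>s}|g|\,d\nu$ by $M(g)e^{n(q_{n,R}(\sigma)-s\sigma)}$. This contradicts $\int g^+\,d\nu>M(g)/2$ unless $R(g)>R$. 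The obstacle problem solved by elliptic functions lives in the complex $\lambda$-plane, with contact set on the imaginary axis and threshold $c(R)=\mathcal W'(0+)$.

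On the upper side, the interlacing is between the spectra of a fixed-size matrix $A$ and its compression $B$, not between roots of $f$ and a second spectrum. The rate is the Gaussian entropy cost $\operatorname{tr}f_\star(A)-\operatorname{tr}f_\star(B)$, a functional your proposal never identifies. The minimizers exhibited are atomic lattice measures, evaluated through $\operatorname{dn}$ and theta products. Because both bounds are computed in closed form and coincide, no continuum duality between the two variational problems is needed. The step you flag as the main obstacle is not where the difficulty lies.
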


\begin{remark}[The limit of the channel bounds]\label{rem:barg-channel-limit}
Barg~\cite[Section~8.1.3]{Barg2026} asks for the limit of
\[
 \mathcal R_\ell(\delta)=\inf\{\chi(\sigma_0,\sigma_1):
 \sigma_0,\sigma_1\text{ are output-symmetric states on }\mathbb C^\ell,
 \ p_{\mathrm e}(\sigma_0,\sigma_1)<\delta\}.
\]
Here output symmetry means that a unitary exchanges the two density
matrices, $p_{\mathrm e}$ is the error probability of the pretty-good
measurement with equal priors, and
$\chi=S_2((\sigma_0+\sigma_1)/2)-(S_2(\sigma_0)+S_2(\sigma_1))/2$ is the Holevo information,
where $S_2(\sigma)=-\operatorname{tr}(\sigma\log_2\sigma)$.
These are the bounds from the pretty good criterion of
Alrabiah and Guruswami~\cite[Theorem~1]{AG2026}.
Theorem~\ref{thm:main} and the constructions in Section~\ref{sec:upper-bounds}
answer Barg's question:
\[
 \lim_{\ell\to\infty}\mathcal R_\ell(\delta)
 =\mathcal J(\delta)=R_D(\delta),\qquad 0<\delta<\tfrac12.
\]
To see this, a classical-quantum channel with Holevo information $\chi$ and decoding error $p_{\mathrm e}$ is, in the language used by~\cite{Salmon2026HalfRate}, a normalized Horn pair, which can be asymptotically realized as feasible solutions of the Delsarte program~\cite[Theorem~2.20]{Salmon2026HalfRate}.
Therefore, the pretty good criterion's bound~\cite[Theorem~1]{AG2026} is never stronger than Delsarte, as observed in~\cite[Section~4.1]{Salmon2026HalfRate}.
Conversely, the proof of Theorem~\ref{thm:horn-upper} shows that the $N$-qubit construction, which is a particular choice of classical-quantum channel constructed in~\cite[Section~4.2]{Salmon2026HalfRate} achieves the optimal rate of Delsarte's program.
\end{remark}

\begin{remark}[Earlier results at special values]\label{rem:earlier-results}
At $\delta_\pi=\frac12-\frac1\pi$, Theorem~\ref{thm:main} gives
$R_D(\delta_\pi)=\frac12$, the main result of
\cite{Salmon2026HalfRate}.  In the spherical case, the closed form gives
\[
 F_{\mathrm S}(s)
 =\frac12\ln\frac{e}{\pi(1-s)}+o(1)
 \qquad(s\uparrow1).
\]
This is the spherical limit corresponding to the Cohn--Elkies~\cite{CohnElkies2003}
sphere-packing exponent in \cite[Chapter~1]{OpenAI2026}.
Chapter~2 of the same work recovers the Cohn-Elkies upper bound from the moving-subspace construction~\cite[Chapter~2, Theorem~8.3]{OpenAI2026}.  Here, we show that one can also recover the lower bound as well.
\end{remark}

\begin{remark}[Kissing numbers]\label{rem:kissing}
Put $r_{\mathrm{LP}}=R_D^{\mathbb S}(\frac12)$, in nats per
dimension.  At the kissing angle $s=\frac12$, the bounds on the
Delsarte rate are
\begin{center}
\begin{tabular}{@{}ll@{}}
Source & Bound on $r_{\mathrm{LP}}$ \\
\hline
Kabatiansky--Levenshtein~\cite{KL1978}
  & $r_{\mathrm{LP}}\leq 0.277913\ldots$ \\
OpenAI, Chapter~2, level two~\cite{OpenAI2026}
  & $r_{\mathrm{LP}}\leq0.27491$ \\
Theorem~\ref{thm:main}
  & $r_{\mathrm{LP}}=0.2749027991\ldots$ \\
Combination of~\cite{CohnZhao2014,OpenAI2026}
  & $r_{\mathrm{LP}}\geq\frac12\ln\frac{2e}{\pi}
     =0.2742086473\ldots$ \\
Samorodnitsky~\cite[Corollary~1.3]{Samorodnitsky2004}
  & $r_{\mathrm{LP}}\geq 0.211039\ldots$
\end{tabular}
\end{center}
Theorem~\ref{thm:main} gives the exponent of the Delsarte bound on the
kissing number in $\mathbb R^n$.  The $0.274208\ldots$ lower bound is the Cohn--Zhao transfer~\cite[Theorem~3.4]{CohnZhao2014}
of the Cohn--Elkies LP exponent in~\cite[Chapter~1]{OpenAI2026}.

Kominers~\cite[Theorem~1 and Corollary~4]{Kominers2026} obtains the upper
exponent $\frac12\ln(2e/\pi)=0.2742086473\ldots$ for lattice kissing
numbers and for the average contact degree of finite packings of congruent
balls, using the auxiliary functions of~\cite[Chapter~1]{OpenAI2026}.
Thus the Kominers bound has a strictly smaller exponent than the Delsarte bound.
\end{remark}

\subsection*{Numerical comparisons}

Tables~\ref{tab:binary-rates} and~\ref{tab:spherical-rates} compare the
rates of the Delsarte linear programs and compare them with earlier bounds above and below, in bits per coordinate for binary
codes and nats per dimension for spherical codes, rounded to eight decimal
places.  The finite-level upper bounds are evaluated
numerically.  Code and numerical data reproducing both tables are included
as ancillary material with the arXiv submission.
The Gilbert--Varshamov and Shannon rows give achievable code
rates, while the Samorodnitsky and Navon--Samorodnitsky rows give lower bounds on
the Delsarte LP.

For binary codes, the OpenAI row takes the minimum of the whole-cube and
constant-weight bounds of~\cite[Chapter~2]{OpenAI2026}; the GJL row gives
the first honeycomb level of~\cite{GJL2026} on its own.
The remaining bounds are from~\cite{MRRW1977,Samorodnitsky2001,NavonSamorodnitsky2005}.  The spherical lower bound is
\cite[Corollary~1.3]{Samorodnitsky2004}.

\begin{table}[htbp]
\centering
\footnotesize
\setlength{\tabcolsep}{3pt}
\begin{tabular}{@{}lrrrrrr@{}}
$\delta$ & $0.05$ & $0.10$ & $\frac12-\frac1\pi$ & $0.25$ & $0.35$ & $0.45$ \\
\hline
MRRW2 & 0.82513681 & 0.69274074 & 0.50177980 & 0.35371054 & 0.15813294 & 0.02526613 \\
OpenAI & 0.82511480 & 0.69255746 & 0.50055474 & 0.35037908 & 0.15750558 & 0.02525738 \\
GJL, first level & 0.82513681 & 0.69274074 & 0.50034777 & 0.34963011 & 0.15745771 & 0.02525727 \\
\hline
Exact LP rate & 0.82511283 & 0.69252461 & 0.50000000 & 0.34959895 & 0.15745731 & 0.02525727 \\
\hline
Samorodnitsky & 0.78591946 & 0.62646625 & 0.41465946 & 0.27165039 & 0.11203244 & 0.01624584 \\
Navon--Samorodnitsky & 0.49405402 & 0.48547530 & 0.38797620 & 0.28411932 & 0.13471088 & 0.02275530 \\
Gilbert--Varshamov & 0.71360304 & 0.53100441 & 0.31623954 & 0.18872188 & 0.06593194 & 0.00722555
\end{tabular}
\caption{Binary rates in bits per coordinate.  The exact LP rate is
$R_D(\delta)$ from Theorem~\ref{thm:main}.}
\label{tab:binary-rates}
\end{table}

\begin{table}[htbp]
\centering
\footnotesize
\setlength{\tabcolsep}{3pt}
\begin{tabular}{@{}lrrrrrr@{}}
$s$ & $0.10$ & $0.25$ & $0.50$ & $0.75$ & $0.90$ & $0.95$ \\
\hline
Kabatiansky--Levenshtein & 0.01759504 & 0.08393636 & 0.27791336 & 0.62448696 & 1.08263232 & 1.42920591 \\
OpenAI, first level & 0.01758891 & 0.08370759 & 0.27492039 & 0.62092418 & 1.07906955 & 1.42564314 \\
\hline
Exact LP rate & 0.01758891 & 0.08370753 & 0.27490280 & 0.62078246 & 1.07892760 & 1.42550119 \\
\hline
Samorodnitsky & 0.01131011 & 0.05810281 & 0.21103985 & 0.52416443 & 0.96696417 & 1.30900350 \\
Shannon & 0.00502517 & 0.03226926 & 0.14384104 & 0.41333929 & 0.83036560 & 1.16395145
\end{tabular}
\caption{Spherical rates in nats per dimension.  The exact LP rate is
$R_D^{\mathbb S}(s)$ from Theorem~\ref{thm:main}.}
\label{tab:spherical-rates}
\end{table}

\subsection*{Use of Generative Artificial Intelligence}

The author and GPT-5.6 Sol formulated the upper bounds for spheres and
binary codes during the writing of~\cite{Salmon2026HalfRate}.
It seemed possible that these bounds might be sharp, although the author was initially discouraged by the complicated-looking measures $\mathfrak{m}$ having a less ``canonical'' looking structure than the arcsine law that appears at $1/2-1/\pi$.  However, upon exploration, several
structural features supported the possibility of a sharp bound: the spherical and binary
constructions have the same form, with bosonic and fermionic realizations,
respectively, and there were two special points at which sharpness was
known.  Moreover, the Cohn--Zhao construction gives a rate close to the
optimal one while being provably suboptimal at large angles.

The author initially worked with Sol and Astra models on incremental
strengthenings of the Navon--Samorodnitsky argument~\cite{NavonSamorodnitsky2005}.
Several variants of the Navon--Samorodnitsky idea for both binary codes and spherical codes improve the large-distance region, and it seems plausible that a sufficiently powerful and clever version of those arguments could be extended to give the lower bound result.
The argument here, however, originated from the
gradual realization (on a human timescale) that in the binary
case, given the upper bounds, it suffices to prove
\[
 R_D(\delta_+(\kappa))+R_D(\delta_-(\kappa))\ge1,
\]
with the explicit relation between the two sides of $1/2 - 1/\pi$
encoded by $\delta_+(\kappa)$ and $\delta_-(\kappa)$, and that this could
be approached by extending the techniques of~\cite[Section~3]{Salmon2026HalfRate}.

Opus 5.5 identified the compatibility of the MacWilliams transform with the predicted relation.
This suggests two natural variants of the half-disk Dirichlet problem in which we replace the imaginary side with a shorter segment or replace the outer circle with two arcs.
The relevant Riemann surface gluing along these cuts is genus 1, which gives hope for an exact connection via elliptic functions.
From there, Opus, with light direction from the author,
recast the half-rate result as a thin obstacle problem, showing that a natural ``tilted mass concentration'' idea could be made to work for arbitrary distances, and in doing so identified the right generalization of~\cite[Lemma~3.3]{Salmon2026HalfRate}.

The draft was prepared with Codex.

\section{The Delsarte programs}\label{sec:programs}

Let $G$ be a compact group, $H$ a closed subgroup, and $X=G/H$,
with invariant probability measure $\mu_X$.
Let $\tau:X\times X\to[-1,1]$ be a continuous, symmetric,
$G$-invariant relation with $\tau(x,x)=1$, whose value determines the
$G$-orbit of an ordered pair.  Transitivity implies that the distribution
$\nu$ of $\tau(x,y)$ for $y\sim\mu_X$ is independent of the fixed point $x$.

For $s<1$, the \emph{threshold graph} $\mathcal G_X(s)$ has vertex set
$X$ and joins distinct vertices $x,y$ when $\tau(x,y)>s$.  Its independent
sets are the subsets whose distinct pair relations are at most
$s$.  Write $\alpha(\mathcal G_X(s))$ for the largest size of such a set.

The polynomial kernels come from orthogonal projections.  In the binary
and spherical cases below,
$L^2(X,\mu_X)$ is an orthogonal sum of finite-dimensional $G$-invariant
spaces $\mathcal H_j$, indexed by degrees $j$, with
$\mathcal H_0$ the constants.  Put $d_j=\dim\mathcal H_j$ and choose an
orthonormal basis $e_{j,1},\ldots,e_{j,d_j}$.  The
kernel
\begin{equation}\label{eq:homogeneous-projection-kernel}
 p_j(\tau(x,y))=
 \frac1{d_j}\sum_{r=1}^{d_j}e_{j,r}(x)\overline{e_{j,r}(y)}
\end{equation}
is independent of the basis and invariant under the diagonal
$G$-action, hence depends only on $\tau(x,y)$.  Its diagonal is constant
by transitivity; integrating the diagonal shows that the normalization
in \eqref{eq:homogeneous-projection-kernel} gives $p_j(1)=1$.
These are real polynomials in the two cases we consider and are the normalized
Krawtchouk and Gegenbauer polynomials displayed below.  Orthogonality of
the projection spaces gives their orthogonality for $\nu$, and $p_0=1$.

Positive semidefiniteness is immediate:
\eqref{eq:homogeneous-projection-kernel} gives the Mercer decomposition.  Moreover, a polynomial is positive semidefinite as a kernel if and only if its Fourier coefficients are nonnegative, by Bochner's theorem.

\begin{definition}[Delsarte program]\label{def:polynomial-program}
For the threshold graph $\mathcal G_X(s)$, an admissible auxiliary
function is
\[
  f(t)=\sum_{j}a_jp_j(t),\qquad
  a_0=1,\quad a_j\geq0.
\]
The coefficient sum is finite.  It is feasible at threshold $s$ if
$f(t)\leq0$ for $t\in\operatorname{supp}\nu\cap[-1,s]$.  The Delsarte value
of the threshold graph is $D_X(s)=\inf f(1)$ over feasible $f$.  The positive-kernel
bound~\cite{Delsarte1973,DGS1977} gives
\[
  \alpha(\mathcal G_X(s))\leq D_X(s).
\]
\end{definition}

\begin{definition}[Binary Hamming scheme]\label{def:binary-program}
Let $n\geq1$ and $H_n=\{0,1\}^n$, with transitive group
$(\mathbb Z/2\mathbb Z)^n\rtimes S_n$ and uniform probability measure.
If $d_H(x,y)$ is Hamming distance,
put $\tau(x,y)=1-2d_H(x,y)/n$.  The relation distribution gives mass
$2^{-n}\binom ni$ to $1-2i/n$, for $0\leq i\leq n$.  Its orthogonal
polynomials are the normalized Krawtchouk polynomials
\[
  B_j^{(n)}(1-2i/n)=\frac{K_j^{(n)}(i)}{\binom nj}
  \qquad(0\leq i,j\leq n),
\]
where
\[
  K_j^{(n)}(i)=\sum_{k=0}^{j}(-1)^k\binom{i}{k}\binom{n-i}{j-k}
  \qquad(0\leq i,j\leq n).
\]
For minimum distance $1\leq d\leq n$, put $s_{n,d}=1-2d/n$.  Then
$\mathcal G_{H_n}(s_{n,d})$ joins words whose Hamming distance is less
than $d$.  Thus the binary Delsarte value is
\[
  D_2(n,d)=D_{H_n}(s_{n,d}).
\]
If $A_2(n,d)$ is the largest size of a binary code of length $n$ and
minimum distance $d$, then
$A_2(n,d)=\alpha(\mathcal G_{H_n}(s_{n,d}))\leq D_2(n,d)$.
\end{definition}

\begin{definition}[Sphere]\label{def:spherical-program}
Let $n\geq3$ and $S^{n-1}\subset\mathbb R^n$ be the unit sphere,
with transitive group $O(n)$ and normalized surface measure.  Here
$\tau(x,y)=\langle x,y\rangle$, and the relation distribution has density proportional to
$(1-t^2)^{(n-3)/2}$ on $[-1,1]$.  If
$P_j^{((n-3)/2,(n-3)/2)}$ is the Jacobi polynomial of degree $j$, the
normalized orthogonal polynomial is
\[
  G_j^{(n)}(t)=
  \frac{P_j^{((n-3)/2,(n-3)/2)}(t)}
       {P_j^{((n-3)/2,(n-3)/2)}(1)}.
\]
These are the normalized Gegenbauer polynomials.  For $-1\leq s<1$,
$\mathcal G_{S^{n-1}}(s)$ joins distinct points with inner product greater
than $s$, and the spherical Delsarte value is
\[
  D_{\mathbb S}(n,s)=D_{S^{n-1}}(s).
\]
If $A_{\mathbb S}(n,s)$ is the largest size of a subset of $S^{n-1}$
whose distinct inner products are at most $s$, then
$A_{\mathbb S}(n,s)=\alpha(\mathcal G_{S^{n-1}}(s))\leq D_{\mathbb S}(n,s)$.
\end{definition}

From now on, we work in either the binary Hamming scheme or the sphere
defined above.  The relation distribution $\nu$ is the orthogonality measure.

\begin{definition}[Support function]\label{def:support-function}
For $\lambda=\sigma+\mathrm i\eta\in\mathbb C$, the support function of
the orthogonality measure is
\[
 \mathsf h(\lambda)=\sup_{t\in\operatorname{supp}\nu}\Re(\lambda t)
   =|\sigma|.
\]
The last equality holds in both cases because the support is contained
in $[-1,1]$ and contains both endpoints.
\end{definition}

\begin{definition}\label{def:normalized-log-modulus}
Let $g$ be integrable with respect to $\nu$, and suppose
\[
 M(g)=\int|g(t)|\,d\nu(t)>0.
\]
Put
\[
 \widehat g(\lambda)=\int g(t)e^{n\lambda t}\,d\nu(t).
\]
This function is entire: since $\nu$ is supported on $[-1,1]$ and
$g$ is integrable, differentiation under the integral is justified
locally uniformly in $\lambda$.  Define its normalized log-modulus by
\[
 v_g(\lambda)=\frac1n\ln\frac{|\widehat g(\lambda)|}{M(g)}.
\]
We use the convention $\ln0=-\infty$.
The log-modulus of an entire function is subharmonic, so $v_g$ is
subharmonic on $\mathbb C$.
\end{definition}

\begin{lemma}\label{lem:support-bound}
For every such $g$ and every $\lambda\in\mathbb C$,
\[
 v_g(\lambda)\leq\mathsf h(\lambda).
\]
In particular, $v_g(\mathrm i\eta)\leq0$ for every $\eta\in\mathbb R$.
\end{lemma}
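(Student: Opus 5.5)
The plan is a direct triangle-inequality estimate on the integral defining $\widehat g$, followed by taking logarithms. Fix $\lambda=\sigma+\mathrm i\eta$. Since $\nu$ is supported on $\operatorname{supp}\nu\subset[-1,1]$, for every $t$ in the support we have
\[
 |e^{n\lambda t}|=e^{n\Re(\lambda t)}\leq e^{n\,\mathsf h(\lambda)},
\]
by the definition of $\mathsf h$ as a supremum over $\operatorname{supp}\nu$ (Definition~\ref{def:support-function}). The integral only sees $t\in\operatorname{supp}\nu$, so no care is needed outside the support.

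Next, pull the modulus inside the integral:
\[
 |\widehat g(\lambda)|\leq\int|g(t)|\,|e^{n\lambda t}|\,d\nu(t)
 \leq e^{n\mathsf h(\lambda)}\int|g(t)|\,d\nu(t)=e^{n\mathsf h(\lambda)}M(g).
\]
Because $M(g)>0$, we may divide by it, take $\frac1n\ln$, and use monotonicity of the logarithm. With the convention $\ln0=-\infty$, the case $\widehat g(\lambda)=0$ is trivially included. This gives $v_g(\lambda)\leq\mathsf h(\lambda)$.

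For the final claim, note that $\mathsf h(\mathrm i\eta)=|\Re(\mathrm i\eta)|=0$, using the identity $\mathsf h(\lambda)=|\sigma|$ from Definition~\ref{def:support-function}. Equivalently, $|e^{n\mathrm i\eta t}|=1$ for real $t$. There is no real obstacle here; the only point needing attention is that the bound is taken over the support of $\nu$ rather than all of $[-1,1]$, which is exactly how $\mathsf h$ is defined.
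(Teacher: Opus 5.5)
Your proposal is correct and follows essentially the same route as the paper: the triangle inequality with $|e^{n\lambda t}|=e^{n\Re(\lambda t)}\le e^{n\mathsf h(\lambda)}$ on $\operatorname{supp}\nu$ gives $|\widehat g(\lambda)|\le M(g)e^{n\mathsf h(\lambda)}$. Dividing by $M(g)$, taking normalized logarithms, and using $\mathsf h(\mathrm i\eta)=0$ then finishes the argument.
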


\begin{proof}
The triangle inequality and the definition of the support function give
\[
 |\widehat g(\lambda)|
 \leq\int |g(t)|e^{n\Re(\lambda t)}\,d\nu(t)
 \leq M(g)e^{n\mathsf h(\lambda)}.
\]
Divide by $M(g)$ and take logarithms.  The last assertion
follows from $\mathsf h(\mathrm i\eta)=0$.
\end{proof}

For the corresponding normalized orthogonal polynomials, define the
Laplace transforms of the signed measures $p_j\,d\nu$, evaluated at
$n\lambda$, by
\[
 \beta_j(\lambda)=\int p_j(t)e^{n\lambda t}\,d\nu(t),
 \qquad
 \Xi_n(\lambda)=\frac1n\ln\sup_j|\beta_j(\lambda)|.
\]
The following proposition shows, among other things, that $\Xi_n$ is
well-defined.

\begin{proposition}\label{lem:polynomial-support-bound}
For every degree $j$ and every $\lambda\in\mathbb C$,
\[
 |\beta_j(\lambda)|\leq e^{n\mathsf h(\lambda)}.
\]
Consequently, $\Xi_n(\lambda)\leq\mathsf h(\lambda)$.
\end{proposition}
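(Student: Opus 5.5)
The plan is to apply the triangle inequality exactly as in Lemma~\ref{lem:support-bound}, with the extra observation that the normalized orthogonal polynomials are bounded by $1$ on the support of $\nu$. If $\sup_{t\in\operatorname{supp}\nu}|p_j(t)|\le 1$, then
\[
 |\beta_j(\lambda)|\le\int|p_j(t)|\,e^{n\Re(\lambda t)}\,d\nu(t)
 \le\int e^{n\mathsf h(\lambda)}\,d\nu(t)=e^{n\mathsf h(\lambda)},
\]
because $\nu$ is a probability measure and $\Re(\lambda t)\le\mathsf h(\lambda)$ on its support by Definition~\ref{def:support-function}. The bound is uniform in $j$. Hence $\sup_j|\beta_j(\lambda)|\le e^{n\mathsf h(\lambda)}<\infty$, so $\Xi_n$ is well defined, with value in $[-\infty,\mathsf h(\lambda)]$, and $\Xi_n(\lambda)\le\mathsf h(\lambda)$. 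In the binary case only finitely many $j$ occur. In the spherical case the supremum runs over all $j$, and the uniform bound is exactly what makes it finite.

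The only real step is therefore the pointwise bound $|p_j(t)|\le1$ on $\operatorname{supp}\nu$. I would prove it uniformly in both settings from the projection-kernel formula \eqref{eq:homogeneous-projection-kernel}, not from special-function identities. Fix $x,y\in X$ with $\tau(x,y)=t$; every $t$ in the support arises this way. By Cauchy--Schwarz in $\mathbb C^{d_j}$,
\[
 |p_j(t)|=\frac1{d_j}\Bigl|\sum_r e_{j,r}(x)\overline{e_{j,r}(y)}\Bigr|
 \le\frac1{d_j}\Bigl(\sum_r|e_{j,r}(x)|^2\Bigr)^{1/2}\Bigl(\sum_r|e_{j,r}(y)|^2\Bigr)^{1/2}.
\]
Each factor equals $\sqrt{d_j\,p_j(1)}=\sqrt{d_j}$, using the constant diagonal and the normalization $p_j(1)=1$ already noted in Section~\ref{sec:programs}. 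This gives $|p_j(t)|\le1$.

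One small care point is that the support of $\nu$ must consist of values actually attained by $\tau$. In the binary case the support is the finite set $\{1-2i/n\}$, all realized. On the sphere, $\tau$ is continuous and surjective onto $[-1,1]$, so every support point is attained. The bound then holds everywhere on $\operatorname{supp}\nu$, not only $\nu$-almost everywhere, although almost everywhere would suffice for the integral estimate. I expect no genuine obstacle; the only place needing attention is the use of Cauchy--Schwarz with the normalization $p_j(1)=1$, which I would fall back on instead of citing the classical bounds for Krawtchouk and Gegenbauer polynomials.
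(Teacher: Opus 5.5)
Your proposal is correct and follows essentially the same route as the paper: the triangle inequality against the probability measure $\nu$, combined with $|p_j(t)|\le1$ on $\operatorname{supp}\nu$. The paper obtains that bound from the two-point principal minor of the positive semidefinite kernel $p_j(\tau(x,y))$ with unit diagonal. Your Cauchy--Schwarz estimate on the Mercer sum is the same fact written out explicitly.
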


\begin{proof}
Every $t\in\operatorname{supp}\nu$ is realized as $\tau(x,y)$ in the
binary and spherical settings.  The positive semidefinite kernel
$p_j(\tau(x,y))$ has diagonal entries $p_j(1)=1$, so its two-point
principal matrix gives $|p_j(t)|\leq1$.  Since $\nu$ is a probability
measure, the triangle inequality gives
\[
 |\beta_j(\lambda)|
 \leq\int |p_j(t)|e^{n\Re(\lambda t)}\,d\nu(t)
 \leq e^{n\mathsf h(\lambda)}.
\]
Taking the supremum over $j$ proves the bound on $\Xi_n$.
\end{proof}

The function $\Xi_n$ is continuous and subharmonic.  Indeed, orthogonality
and $|p_j|\le1$ give
$|\beta_j(\lambda)|\le\sum_{k\ge j}(n|\lambda|)^k/k!$,
so the tail tends to zero locally uniformly.  The transforms cannot all
vanish at one point: their vanishing would imply that integration against
$e^{n\lambda t}d\nu(t)$ annihilates every polynomial, and hence, by
polynomial approximation, also $e^{-n\lambda t}$, a contradiction.
Thus locally their supremum is a positive maximum of finitely many
moduli.  Its logarithm is continuous and subharmonic.

\begin{example}[Binary codes]\label{ex:binary-transforms}
For the binomial measure and normalized Krawtchouk polynomials,
\begin{equation}\label{eq:binary-transforms}
 \beta_j(\lambda)=(\cosh\lambda)^{n-j}(\sinh\lambda)^j
 \qquad(0\leq j\leq n).
\end{equation}
This factorization reflects the independent coordinates of the binary
cube: the exponential contributes $\cosh\lambda$ on each coordinate,
and a coordinate appearing in a character contributes $\sinh\lambda$.
Consequently,
\begin{equation}\label{eq:binary-Xi}
 \Xi_n(\lambda)=\ln\max\{|\cosh\lambda|,|\sinh\lambda|\}.
\end{equation}
The supremum is attained at $j=0$ or $j=n$, and the expression is
independent of $n$.
\end{example}

\begin{example}[Spheres]\label{ex:spherical-transforms}
Put $\alpha_n=(n-2)/2$.  For the spherical measure and normalized
Gegenbauer polynomials, the classical Gegenbauer integral gives
\begin{equation}\label{eq:spherical-transforms}
 \beta_j(\lambda)=\Gamma(\alpha_n+1)
   \left(\frac2{n\lambda}\right)^{\alpha_n}
   I_{j+\alpha_n}(n\lambda),
\end{equation}
where $I_a$ is the modified Bessel function of the first kind and
$\Gamma$ is the gamma function; see~\cite[\S18.17(v)]{DLMF} and
analytic continuation.  The product is interpreted by its entire
continuation, so there is no singularity at $\lambda=0$.
For $\lambda\ne0$,
\begin{equation}\label{eq:spherical-Xi}
 \Xi_n(\lambda)=\frac1n\ln\left[
 \Gamma(\alpha_n+1)
 \left(\frac2{n|\lambda|}\right)^{\alpha_n}
 \sup_{j\geq0}|I_{j+\alpha_n}(n\lambda)|\right].
\end{equation}
At the origin, orthogonality gives $\beta_0(0)=1$ and
$\beta_j(0)=0$ for $j\geq1$, so $\Xi_n(0)=0$.
For real $x\ne0$, monotonicity of $I_a(n|x|)$ in its nonnegative
order~\cite[\S10.37]{DLMF} shows that the supremum occurs at $j=0$:
\begin{equation}\label{eq:spherical-Xi-real}
 \Xi_n(x)=\frac1n\ln\left[
 \Gamma(\alpha_n+1)
 \left(\frac2{n|x|}\right)^{\alpha_n}
 I_{\alpha_n}(n|x|)\right].
\end{equation}
On the imaginary axis this gives
\begin{equation}\label{eq:spherical-axis-transforms}
 |\beta_j(\mathrm i\eta)|=\Gamma(\alpha_n+1)
   \left(\frac2{n|\eta|}\right)^{\alpha_n}
   |J_{j+\alpha_n}(n|\eta|)|\qquad(\eta\ne0),
\end{equation}
where $J_a$ is the Bessel function of the first kind.
Thus, for $\eta\ne0$,
\begin{equation}\label{eq:spherical-Xi-imaginary}
 \Xi_n(\mathrm i\eta)=\frac1n\ln\left[
 \Gamma(\alpha_n+1)
 \left(\frac2{n|\eta|}\right)^{\alpha_n}
 \sup_{j\geq0}|J_{j+\alpha_n}(n|\eta|)|\right].
\end{equation}
For complex arguments the maximizing order need not be $j=0$;
the supremum over orders is retained in \eqref{eq:spherical-Xi}.
\end{example}

For an admissible auxiliary function $g$, define
\[
 R(g)=\frac1n\ln\frac{g(1)}{M(g)}.
\]
Here $g(1)>0$ and $M(g)>0$ because the constant term of $g$ is $1$.
The natural logarithmic rate of the Delsarte bound obtained using $g$
as a feasible solution is $n^{-1}\ln g(1)$.  The quantity $R(g)$ is
its $L^1$-normalized form.  Since $M(g)\geq|\int g\,d\nu|=1$, we have
$R(g)\leq n^{-1}\ln g(1)$.

\begin{lemma}\label{lem:envelope-bound}
For every admissible auxiliary function $g$ and every $\lambda\in\mathbb C$,
\[
 v_g(\lambda)\leq\Xi_n(\lambda)+R(g).
\]
\end{lemma}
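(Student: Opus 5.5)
Since $g=\sum_j a_jp_j$ is a finite combination, the plan is to expand $\widehat g$ linearly in the transforms $\beta_j$ and bound it coefficientwise. By linearity of the integral,
\[
 \widehat g(\lambda)=\sum_j a_j\int p_j(t)e^{n\lambda t}\,d\nu(t)=\sum_j a_j\beta_j(\lambda).
\]
Every $a_j$ is nonnegative, so the triangle inequality gives
\[
 |\widehat g(\lambda)|\le\sum_j a_j|\beta_j(\lambda)|\le\Bigl(\sup_j|\beta_j(\lambda)|\Bigr)\sum_j a_j=e^{n\Xi_n(\lambda)}\sum_j a_j .
\]

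Next I identify $\sum_j a_j$ with $g(1)$. This uses the normalization $p_j(1)=1$, which follows from \eqref{eq:homogeneous-projection-kernel}, so that $g(1)=\sum_j a_jp_j(1)=\sum_j a_j$. The bound then reads $|\widehat g(\lambda)|\le g(1)e^{n\Xi_n(\lambda)}$. Dividing by $M(g)>0$, taking logarithms and dividing by $n$ gives
\[
 v_g(\lambda)=\frac1n\ln\frac{|\widehat g(\lambda)|}{M(g)}\le\Xi_n(\lambda)+\frac1n\ln\frac{g(1)}{M(g)}=\Xi_n(\lambda)+R(g).
\]

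If $\widehat g(\lambda)=0$, the left side is $-\infty$ and the inequality is trivial. $\Xi_n(\lambda)$ is finite everywhere: it is bounded above by Proposition~\ref{lem:polynomial-support-bound}, and it is never $-\infty$ because the transforms do not vanish simultaneously, as shown in the paragraph after that proposition. So no separate edge cases arise. The main, though mild, point to check is the use of $a_j\ge0$. Without it the coefficient sum would not equal $g(1)$ after taking absolute values, and this is exactly where admissibility enters.
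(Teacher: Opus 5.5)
Your proposal is correct and follows the paper's proof essentially verbatim: expand $\widehat g=\sum_j a_j\beta_j$, use $a_j\ge0$ with the triangle inequality, identify $\sum_j a_j=g(1)$ via $p_j(1)=1$, then divide by $M(g)$ and take normalized logarithms. Your remarks on the case $\widehat g(\lambda)=0$ and on the finiteness of $\Xi_n$ are harmless extras that the inequality itself does not require.
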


\begin{proof}
Write $g=\sum_j a_jp_j$, with $a_j\geq0$.  By linearity,
$\widehat g=\sum_j a_j\beta_j$.  Since $p_j(1)=1$, we have
$\sum_j a_j=g(1)$, and hence
\[
 |\widehat g(\lambda)|
 \leq\sum_j a_j|\beta_j(\lambda)|
 \leq g(1)\sup_j|\beta_j(\lambda)|.
\]
Divide by $M(g)$ and take normalized logarithms.
\end{proof}

\section{From an obstacle problem to Delsarte lower bounds}\label{sec:barrier}

Write
$u^*(\lambda)=\limsup_{\zeta\to\lambda}u(\zeta)$ for upper
semicontinuous envelope.

\subsection{A finite-dimensional lower bound}

For $R\geq0$, define the finite-dimensional obstacle and its Perron
solution by
\[
 \psi_{n,R}=\min\{\mathsf h,R+\Xi_n\},\qquad
 V_{n,R}=\left(\sup\{v\text{ subharmonic on }\mathbb C:
                         v\leq\psi_{n,R}\}\right)^*.
\]
The preceding bounds give $v_g\leq V_{n,R}$ whenever $R(g)\leq R$.
To treat the two arguments together, write
$d\nu=\omega_n\,d\rho_n$.  In the binary case, $\rho_n$ is counting
measure on $\{t_i=1-2i/n:0\le i\le n\}$ and
$\omega_n(t_i)=2^{-n}\binom ni$; in the spherical case, $d\rho_n=dt$
and $\omega_n=d\nu/dt$ on $[-1,1]$.
Let $L_n=\rho_n([-1,1])$, and use the Fourier domain and normalization
$c_n$ in the following table:
\[
\begin{array}{c|c|c|c|c}
 X&d\rho_n&\mathcal I_n&L_n&c_n\\ \hline
 H_n&\text{counting measure}&[0,\pi]&n+1&1/\pi\\[2pt]
 S^{n-1}&dt&\mathbb R&2&n/(2\pi)
\end{array}
\]
The binary Fourier domain is one period of the transform modulus; the
spherical Fourier domain is the whole line.  The constants $c_n$ are the
Plancherel normalizations for the frequency convention $e^{\mathrm i n\eta t}$.

For $\sigma>0$, define
\begin{equation}\label{eq:finite-fourier-factor}
 Q_{n,R}(\sigma)^2=L_nc_n\int_{\mathcal I_n}
              e^{2nV_{n,R}(\sigma+\mathrm i\eta)}\,d\eta.
\end{equation}
Here $L_nc_n=(n+1)/\pi$ for binary codes and $L_nc_n=n/\pi$ for spheres.
We use its normalized logarithm
\[
 q_{n,R}(\sigma)=\frac1n\ln Q_{n,R}(\sigma),
\]
allowing the value $+\infty$.

\begin{lemma}[Tilted mass concentration]\label{lem:mass-concentration}
At fixed dimension, for every admissible $g$ with $R(g)\leq R$,
every $\sigma>0$, and every $-1<b<1$,
\begin{equation}\label{eq:finite-mass-concentration}
 \int_{t>b}|g(t)|\,d\nu(t)
 \leq M(g)e^{n(q_{n,R}(\sigma)-b\sigma)}.
\end{equation}
\end{lemma}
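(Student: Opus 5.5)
The plan is to pass to an exponentially tilted measure and then apply Cauchy--Schwarz together with Plancherel. Write $d\nu=\omega_n\,d\rho_n$ and consider the function $G_\sigma(t)=g(t)\omega_n(t)e^{n\sigma t}$ on $\operatorname{supp}\rho_n\subset[-1,1]$. On the region $t>b$ we have $e^{n\sigma t}\ge e^{n\sigma b}$, which gives
\[
 \int_{t>b}|g|\,d\nu\le e^{-nb\sigma}\int |g(t)|e^{n\sigma t}\omega_n(t)\,d\rho_n(t)
 =e^{-nb\sigma}\|G_\sigma\|_{L^1(\rho_n)}.
\]
It therefore suffices to prove $\|G_\sigma\|_{L^1(\rho_n)}\le M(g)Q_{n,R}(\sigma)$.

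\textbf{Step 1: Cauchy--Schwarz.} Since $\rho_n([-1,1])=L_n$, Cauchy--Schwarz gives $\|G_\sigma\|_{L^1(\rho_n)}\le L_n^{1/2}\|G_\sigma\|_{L^2(\rho_n)}$.

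\textbf{Step 2: Plancherel.} The Fourier transform of $G_\sigma\,d\rho_n$ at frequency $n\eta$ is $\int g(t)e^{n(\sigma+\mathrm i\eta)t}\,d\nu(t)=\widehat g(\sigma+\mathrm i\eta)$. In the binary case the support is $t_i=1-2i/n$, so the transform is a trigonometric polynomial in $\eta$ with period $\pi$. Parseval over one period then gives $\sum_i|G_\sigma(t_i)|^2=\frac1\pi\int_0^\pi|\widehat g(\sigma+\mathrm i\eta)|^2\,d\eta$. In the spherical case, Plancherel with frequency variable $n\eta$ gives $\int|G_\sigma|^2\,dt=\frac n{2\pi}\int_{\mathbb R}|\widehat g(\sigma+\mathrm i\eta)|^2\,d\eta$. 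In both cases this reads
\[
 \|G_\sigma\|_{L^2(\rho_n)}^2=c_n\int_{\mathcal I_n}|\widehat g(\sigma+\mathrm i\eta)|^2\,d\eta ,
\]
which matches the constants in the table.

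\textbf{Step 3: the obstacle bound.} By definition, $|\widehat g(\lambda)|=M(g)e^{nv_g(\lambda)}$. Lemma~\ref{lem:support-bound} and Lemma~\ref{lem:envelope-bound} give $v_g\le\mathsf h$ and $v_g\le R(g)+\Xi_n\le R+\Xi_n$, so $v_g\le\psi_{n,R}$. Since $v_g$ is subharmonic on $\mathbb C$, it lies in the Perron family, hence $v_g\le V_{n,R}$. Combining the three steps,
\[
 \|G_\sigma\|_{L^1(\rho_n)}^2\le L_nc_nM(g)^2\int_{\mathcal I_n}e^{2nV_{n,R}(\sigma+\mathrm i\eta)}\,d\eta=M(g)^2Q_{n,R}(\sigma)^2 .
\]
Taking square roots and inserting this into the first display yields \eqref{eq:finite-mass-concentration}. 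If $Q_{n,R}(\sigma)=+\infty$ there is nothing to prove.

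The main obstacle is Step 2 in the spherical case. There one must check that $G_\sigma\in L^2(dt)$, so that Plancherel applies. This holds because $\omega_n\propto(1-t^2)^{(n-3)/2}$ is bounded for $n\ge3$ and $g$ is a polynomial. One also has to track the normalization of the frequency variable $n\eta$, which is exactly what produces $c_n=n/(2\pi)$. The binary periodicity also needs care: the exponents $n\mathrm i\eta t_i$ differ by multiples of $2\mathrm i\eta$, so the period is indeed $\pi$ and the interval $[0,\pi]$ is correct. The remaining steps are formal.
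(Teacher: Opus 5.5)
Your proposal is correct and follows essentially the same route as the paper: tilt by $e^{n\sigma t}$ on $t>b$, apply Cauchy--Schwarz against $\rho_n$, use Plancherel with the constants $c_n$, and finish with $v_g\le V_{n,R}$. The paper justifies Plancherel by noting that the transform is square-integrable (from $v_g\le V_{n,R}$ and $Q_{n,R}<\infty$); you instead show directly that $G_\sigma$ is bounded and compactly supported, which is also fine. That boundedness needs only $|g|\le g(1)$ on $[-1,1]$, which follows from $|p_j|\le1$, so it does not require $g$ to be a polynomial.
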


This lemma has the form of Chernoff's bound, although $M$ here is now a mass.
We give a direct proof below.

\begin{proof}
If $Q_{n,R}(\sigma)=+\infty$, the estimate is immediate.  Otherwise,
form the density of the tilted signed measure with respect to $\rho_n$:
\[
 r_\sigma(t)=e^{n\sigma t}g(t)\omega_n(t),\qquad
 \widehat g(\sigma+\mathrm i\eta)
 =\int r_\sigma(t)e^{\mathrm i n\eta t}\,d\rho_n(t).
\]
In the spherical case, extend $r_\sigma$ by zero outside $[-1,1]$.
The bound $v_g\leq V_{n,R}$ and finiteness of $Q_{n,R}$ make the
transform square-integrable.  Plancherel's theorem then gives
\begin{equation}\label{eq:shared-plancherel}
 \|r_\sigma\|_{L^2(\rho_n)}^2
 =c_n\int_{\mathcal I_n}|\widehat g(\sigma+\mathrm i\eta)|^2\,d\eta.
\end{equation}
Cauchy--Schwarz, Plancherel, and the bound $v_g\leq V_{n,R}$ give the conclusion
\begin{align*}
 \int_{t>b}|g(t)|\,d\nu(t)
 &\le e^{-nb\sigma}\|r_\sigma\|_{L^1(\rho_n)}\\
 &\le e^{-nb\sigma}\sqrt{L_n}\|r_\sigma\|_{L^2(\rho_n)}\\
 &\le e^{-nb\sigma}M(g)
   \left(L_nc_n\int_{\mathcal I_n}
         e^{2nV_{n,R}(\sigma+\mathrm i\eta)}\,d\eta\right)^{1/2}\\
 &=M(g)e^{n(q_{n,R}(\sigma)-b\sigma)}.
\end{align*}
\end{proof}

The sign condition forces more than half of the absolute mass into
the cap $t>s$.

\begin{lemma}[Positive mass]\label{lem:positive-mass}
For an admissible $g$, write $g^+(t)=\max\{g(t),0\}$.  Then
\[
 \int g^+\,d\nu=\frac{M(g)+1}{2}>\frac{M(g)}2.
\]
If $g$ is feasible at threshold $s$, then $g^+$ vanishes on
$\operatorname{supp}\nu\cap[-1,s]$.
\end{lemma}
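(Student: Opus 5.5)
The plan is to compare $\int g\,d\nu$ with $\int|g|\,d\nu$ and use orthogonality. Since $g=\sum_j a_jp_j$ with $a_0=1$, and each $p_j$ with $j\ge1$ is orthogonal to $p_0=1$ under the probability measure $\nu$, we get $\int g\,d\nu=a_0=1$. Write $g^-=\max\{-g,0\}$, so that $g=g^+-g^-$ and $|g|=g^++g^-$. Integrating these two identities against $\nu$ gives
\[
 \int g^+\,d\nu-\int g^-\,d\nu=1,\qquad
 \int g^+\,d\nu+\int g^-\,d\nu=M(g).
\]
Adding them and dividing by two yields $\int g^+\,d\nu=(M(g)+1)/2$. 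This exceeds $M(g)/2$ because $1>0$. The only point to check is that $M(g)$ is finite, which holds because $g$ is a finite combination of polynomials and $\nu$ is a probability measure on $[-1,1]$.

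For the second claim, feasibility at threshold $s$ means $g(t)\le0$ for $t\in\operatorname{supp}\nu\cap[-1,s]$. There $g^+(t)=\max\{g(t),0\}=0$, which is exactly the assertion.

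Neither step is a real obstacle; the only care needed is to use $\int p_j\,d\nu=\int p_jp_0\,d\nu=0$ for $j\ge1$, which is the orthogonality recorded in Section~\ref{sec:programs}. Together with Lemma~\ref{lem:mass-concentration}, this lemma will then be used with $b=s$. It shows that more than half of the absolute mass sits in the cap $t>s$, which forces $M(g)/2<M(g)e^{n(q_{n,R}(\sigma)-s\sigma)}$.
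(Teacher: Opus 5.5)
Your proof is correct and follows the paper's argument. Orthogonality gives $\int g\,d\nu=a_0=1$, integrating $g^+=(|g|+g)/2$ (which is what your addition of $g=g^+-g^-$ and $|g|=g^++g^-$ amounts to) yields $(M(g)+1)/2$, and the vanishing on $\operatorname{supp}\nu\cap[-1,s]$ is immediate from feasibility. One caveat: the paper's phrase ``the coefficient sum is finite'' may allow infinitely many nonzero $a_j$ with $\sum_j a_j<\infty$. In that reading, justify finiteness of $M(g)$ and termwise integration by $|p_j|\le1$ on $\operatorname{supp}\nu$, which gives $|g|\le g(1)$ and uniform convergence there, rather than by $g$ being a finite combination.
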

\begin{proof}
Orthogonality and the normalization of the constant term give
$\int g\,d\nu=1$.  Since $g^+=(|g|+g)/2$, integration yields
\[
 \int g^+\,d\nu=\frac12\left(\int|g|\,d\nu+\int g\,d\nu\right)
 =\frac{M(g)+1}{2}>\frac{M(g)}2.
\]
Finally, feasibility gives $g(t)\leq0$ on
$\operatorname{supp}\nu\cap[-1,s]$, so $g^+$ vanishes there.
\end{proof}

In the binary case, $t>s$ means Hamming distance less than $d$.
The corresponding Krawtchouk mass-concentration statement is
\cite[Theorem~3.4]{Salmon2026HalfRate}; its application to positive
mass in the Delsarte program appears in the proof of
\cite[Proposition~3.8]{Salmon2026HalfRate}.

\begin{theorem}[Finite-dimensional Delsarte lower bound]
\label{thm:finite-barrier}
Let $R\geq0$ and $-1<s<1$.  If, for some $\sigma>0$,
\begin{equation}\label{eq:finite-barrier-criterion}
 q_{n,R}(\sigma)+\frac{\ln2}{n}\leq s\sigma,
\end{equation}
then
\[
 \frac1n\ln D_X(s)\geq R.
\]
In particular, in the binary and spherical settings respectively,
\[
 \frac1n\log_2 D_2(n,d)\geq\frac R{\ln2}
 \quad\text{when }s=1-2d/n,
 \qquad \frac1n\ln D_{\mathbb S}(n,s)\geq R.
\]
\end{theorem}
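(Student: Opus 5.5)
The argument is by contradiction, combining Lemma~\ref{lem:mass-concentration} with Lemma~\ref{lem:positive-mass}. Suppose $\frac1n\ln D_X(s)<R$. By the definition of $D_X(s)$ as an infimum, there is then a feasible admissible $g$ at threshold $s$ with $\frac1n\ln g(1)<R$. Since $M(g)\ge1$, we have $R(g)\le\frac1n\ln g(1)<R$. The remark preceding Lemma~\ref{lem:mass-concentration} gives $v_g\le V_{n,R}$, so Lemma~\ref{lem:mass-concentration} applies to $g$ at every $\sigma>0$ and every $b\in(-1,1)$.

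Take $b=s$ and $\sigma$ as in \eqref{eq:finite-barrier-criterion}. Feasibility makes $g^+$ vanish on $\operatorname{supp}\nu\cap[-1,s]$. Lemma~\ref{lem:positive-mass} therefore gives
\[
 \frac{M(g)}2<\int g^+\,d\nu=\int_{t>s}g^+\,d\nu\le\int_{t>s}|g|\,d\nu .
\]
Lemma~\ref{lem:mass-concentration} bounds the right-hand side by $M(g)e^{n(q_{n,R}(\sigma)-s\sigma)}$. By \eqref{eq:finite-barrier-criterion} this is at most $M(g)e^{-\ln2}=M(g)/2$. This contradicts the strict inequality, so $\frac1n\ln D_X(s)\ge R$. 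Note that $q_{n,R}(\sigma)$ is finite here, since \eqref{eq:finite-barrier-criterion} holds with a finite right-hand side. The strict inequality in Lemma~\ref{lem:positive-mass}, coming from the $+1$, is exactly what allows a non-strict criterion.

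One point needs care. The relation $v_g\le V_{n,R}$ requires $R(g)\le R$. This holds because $R(g)\le R$ makes $R(g)+\Xi_n\le R+\Xi_n$, and Lemma~\ref{lem:support-bound} gives $v_g\le\mathsf h$. Hence $v_g\le\psi_{n,R}$. Since $v_g$ is subharmonic, it lies below the Perron envelope, which is then regularized by the upper semicontinuous envelope. Also, $\int_{t>s}$ should be read over $\operatorname{supp}\nu$, consistent with the lemma's statement for $b=s\in(-1,1)$.

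For the specializations, in the binary case $s=s_{n,d}=1-2d/n$ and $D_2(n,d)=D_{H_n}(s_{n,d})$. Dividing $\frac1n\ln D_2\ge R$ by $\ln2$ gives the $\log_2$ form. In the spherical case $D_{\mathbb S}(n,s)=D_{S^{n-1}}(s)$ directly. There is no real obstacle: the only subtle step is checking the hypothesis $v_g\le V_{n,R}$ for near-optimal feasible $g$, which the infimum argument supplies.
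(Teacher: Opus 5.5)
Your proposal is correct and is essentially the paper's argument: the paper shows that any feasible $g$ with $R(g)\le R$ would violate Lemma~\ref{lem:positive-mass} via \eqref{eq:finite-mass-concentration} and \eqref{eq:finite-barrier-criterion}, so every feasible $g$ has $g(1)=M(g)e^{nR(g)}>e^{nR}$, and then it takes the infimum. You run the same steps as a contradiction starting from a feasible $g$ with $g(1)<e^{nR}$, which is equivalent.
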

\begin{proof}
Suppose that $g$ is feasible at $s$ and $R(g)\leq R$.
The finite mass estimate \eqref{eq:finite-mass-concentration} and
\eqref{eq:finite-barrier-criterion} give
\[
 \int_{t>s}|g|\,d\nu
 \leq M(g)e^{n(q_{n,R}(\sigma)-s\sigma)}
 \leq\frac{M(g)}2.
\]
This contradicts Lemma~\ref{lem:positive-mass}, since $g^+$ is
supported on $t>s$.  Thus every feasible $g$ has $R(g)>R$.
As $M(g)\geq|\int g\,d\nu|=1$, we have
$g(1)=M(g)e^{nR(g)}>e^{nR}$.  Taking the infimum proves the claim.
\end{proof}

\subsection{The asymptotic lower bound}

Put
\[
 \Theta=\bigl(\limsup_{n\to\infty}\Xi_n\bigr)^*.
\]
Since the subharmonic functions $\Xi_n$ are locally uniformly bounded
above by $\mathsf h$,
\cite[Theorem~2.4.7(2)]{DinhSibony2005} gives that
$\Theta$ is subharmonic and $\Theta\leq\mathsf h$; also $\Theta(0)=0$
since $\Xi_n(0)=\mathsf h(0)=0$.
The limiting function has an explicit form in both settings.

\begin{example}[Binary codes]
Since \eqref{eq:binary-Xi} is independent of $n$ and continuous,
\begin{equation}\label{eq:binary-Theta}
 \Theta(\lambda)=\ln\max\{|\cosh\lambda|,|\sinh\lambda|\}.
\end{equation}
In particular, $\Theta(\mathrm i\eta)=
\ln\max\{|\cos\eta|,|\sin\eta|\}$.
\end{example}

Computing the limit in the spherical case is a bit more involved.

\begin{lemma}[Locally uniform spherical asymptotics]
\label{lem:spherical-Xi-convergence}
In the spherical setting, $\Xi_n(\lambda)$ converges locally uniformly on
$\{\operatorname{Re}\lambda>0\}$ to
\[
 \frac12\left(
 \operatorname{Re}\sqrt{1+4\lambda^2}-1
 -\ln\left|\frac{1+\sqrt{1+4\lambda^2}}2\right|\right),
\]
where the square root has positive real part.
\end{lemma}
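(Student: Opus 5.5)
The plan is to compute the limit directly from the Bessel formula \eqref{eq:spherical-Xi}, using Debye's uniform asymptotic expansion for $I_\nu(\nu z)$. The expansion is uniform in a closed subsector of $\{|\arg z|<\pi/2\}$ for large $\nu$, including $z\to0$:
\[
 I_\nu(\nu z)=\frac{e^{\nu\eta(z)}}{\sqrt{2\pi\nu}\,(1+z^2)^{1/4}}\bigl(1+O(\nu^{-1})\bigr),
 \qquad \eta(z)=\sqrt{1+z^2}+\ln\frac{z}{1+\sqrt{1+z^2}}.
\]
Fix a compact set $\Lambda\subset\{\Re\lambda>0\}$; then $|\arg\lambda|\le\pi/2-\epsilon$ on $\Lambda$. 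Write $\nu=\alpha_n+j$ and $z=n\lambda/\nu$. Since $\nu\ge\alpha_n=(n-2)/2$, the variable $z$ stays in the fixed subsector, with $|z|\le 2|\lambda|(1+O(1/n))$. The Gamma prefactor and the power $(2/(n|\lambda|))^{\alpha_n}$ do not depend on $j$, so the whole question is the exponential rate $\frac1n\ln|I_{\alpha_n+j}(n\lambda)|$, as a function of $j$, uniformly in $\lambda\in\Lambda$.

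\emph{Step 1 (the $j=0$ term).} Take $\nu=\alpha_n$ and $z=2\lambda(1+O(1/n))$. Combine Debye with Stirling's formula, $\frac1n\ln\Gamma(n/2)=\frac12(\ln\frac n2-1)+o(1)$. The normalized logarithm of the $j=0$ term then becomes
\[
 \tfrac12\Bigl[\ln\tfrac n2-1+\ln\tfrac{2}{n|\lambda|}+\Re\sqrt{1+4\lambda^2}+\ln|2\lambda|-\ln\bigl|1+\sqrt{1+4\lambda^2}\bigr|\Bigr]+o(1).
\]
Here the $\ln n$ and $\ln|\lambda|$ terms cancel. What remains is exactly the claimed limit, uniformly on $\Lambda$, since $\eta$ is continuous there and $1+4\lambda^2$ stays away from $(-\infty,0]$. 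This gives the lower bound for $\Xi_n$.

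\emph{Step 2 (the supremum is attained at $j=0$ to exponential order).} The key observation is monotonicity of the Debye rate in the order. Treat $\nu$ as continuous and put $\Phi(\nu)=\Re[\nu\,\eta(n\lambda/\nu)]$. Using $\eta'(z)=\sqrt{1+z^2}/z$, we get
\[
 \Phi'(\nu)=\Re\bigl[\eta(z)-z\eta'(z)\bigr]=\ln\Bigl|\frac{z}{1+\sqrt{1+z^2}}\Bigr|.
\]
This is negative. Indeed, with $s=\sqrt{1+z^2}$ we have $|z|^2=|s-1||s+1|$, and $|s-1|<|s+1|$ because $\Re s>0$. Hence $\Phi$ is decreasing in $\nu$, and the leading exponential factor is largest at $j=0$.

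To turn this into a rigorous bound, I split the orders into two ranges.
\begin{itemize}
\item For $\alpha_n\le\nu\le Cn$, the uniform Debye expansion with $z$ in a compact subset of the sector gives $|I_\nu(n\lambda)|\le e^{\Phi(\alpha_n)+o(n)}$ uniformly. The prefactors $\sqrt{2\pi\nu}\,|1+z^2|^{1/4}$ are only polynomial in $n$ and do not affect the rate.
\item For $\nu>Cn$, a crude series bound suffices: $|I_\nu(w)|\le (|w|/2)^\nu e^{|w|^2/4}/\Gamma(\nu+1)$. With $C$ large this is exponentially smaller than the $j=0$ term, uniformly in $j$.
\end{itemize}
Combining the two ranges, $\sup_j|I_{\alpha_n+j}(n\lambda)|=|I_{\alpha_n}(n\lambda)|\,e^{o(n)}$ uniformly on $\Lambda$. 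Together with Step 1, this proves the locally uniform convergence.

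The main obstacle is Step 2. It requires the Debye expansion to hold uniformly in both $\nu\in[\alpha_n,Cn]$ and complex $z$ in a sector, including near $z=0$. It also needs the sign computation for $\Phi'$ to be made uniform, with $\Phi'\le0$, so that no intermediate order $j\sim\mu n$ overtakes $j=0$. The paper notes that for complex arguments the maximizing order need not be exactly $j=0$; the monotonicity argument shows it is $j=0$ to exponential order, which is all the limit requires. If the uniform Debye statement near $z=0$ proves delicate, I would instead handle small $|z|$ directly with the series bound above, since there the rate is dominated by $(|z|e/2)^\nu$-type decay.
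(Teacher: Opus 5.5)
Your overall route is the paper's: the uniform Debye expansion for $I_{\alpha_n+j}(n\lambda)$, Stirling for the Gamma factor, and monotonicity of the exponent in the order. Your $\Phi'(\nu)=\ln|z/(1+\sqrt{1+z^2})|<0$ is exactly the paper's $\partial F/\partial a<0$. Step~1 and the range $\alpha_n\le\nu\le Cn$ are fine.

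\textbf{The gap is the tail estimate for $\nu>Cn$, which fails as written.} With $w=n\lambda$, the factor $e^{|w|^2/4}=e^{n^2|\lambda|^2/4}$ is superexponential in $n$. At $\nu\approx Cn$ the logarithm of your bound is
\[
 Cn\ln\frac{e|\lambda|}{2C}+\frac{n^2|\lambda|^2}{4}+O(\ln n).
\]
This is positive of order $n^2$, while $\ln|I_{\alpha_n}(n\lambda)|=O(n)$. Your bound only beats the $j=0$ term once $\nu$ is of order $n^2/\ln n$, so the orders $Cn<\nu\lesssim n^2/\ln n$ are left uncovered. For these orders $z=n\lambda/\nu$ is as small as $O(\ln n/n)$, which is exactly the regime the split was meant to avoid. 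Your fallback for small $|z|$ has the same defect: $e^{|w|^2/4}=e^{\nu^2|z|^2/4}$ is not $e^{o(\nu)}$ unless $\nu|z|^2\to0$, so it is not of $(e|z|/2)^\nu$ type.

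The repair is one line. Since $\Gamma(\nu+k+1)\ge(\nu+1)^k\Gamma(\nu+1)$, the power series gives
\[
 |I_\nu(w)|\le\frac{(|w|/2)^\nu}{\Gamma(\nu+1)}
 \exp\Bigl(\frac{|w|^2}{4(\nu+1)}\Bigr).
\]
For $|w|=n|\lambda|$ and $\nu\ge Cn$, this is at most $(e|\lambda|/(2C))^\nu e^{n|\lambda|^2/(4C)}$. For $C$ large, depending on the compact set, this is uniformly exponentially smaller than the $j=0$ term.

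Alternatively, do what the paper does and skip the split. Debye's expansion is uniform on the closed sector $|\arg z|\le\pi/2-\epsilon$, including $z\to0$, where $p=(1+z^2)^{-1/2}\to1$. So it covers every $j\ge0$ at once, including $j/n\to\infty$, with uniformly bounded prefactors. Monotonicity in the order then bounds every $|\beta_j(\lambda)|$ by a constant times the $j=0$ exponential.
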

\begin{proof}
To justify the supremum over degrees, fix a compact set
$\mathcal K\subset\{\operatorname{Re}\lambda>0\}$ and write
$a_{n,j}=(j+\alpha_n)/n$.  For $a>0$ define
\[
 Z(a,\lambda)=\sqrt{a^2+\lambda^2}
 +a\log\frac{\lambda}{a+\sqrt{a^2+\lambda^2}},\qquad
 F(a,\lambda)=-\frac12(1+\ln|\lambda|)+\operatorname{Re}Z(a,\lambda),
\]
with branches continued from positive real $\lambda$.
The uniform Debye expansion~\cite[Eq.~10.41.3 and \S10.41(iii)]{DLMF}
gives
\begin{equation}\label{eq:uniform-bessel-debye}
 I_{na}(n\lambda)=
 \frac{e^{nZ(a,\lambda)}}{\sqrt{2\pi n}\,(a^2+\lambda^2)^{1/4}}
 \bigl(1+O_{\mathcal K}(n^{-1})\bigr),\qquad a=a_{n,j},
\end{equation}
uniformly for $\lambda\in\mathcal K$ and all $j\geq0$ as $n\to\infty$.
Indeed, the scaled argument $\lambda/a$ remains in a fixed sector
$|\arg z|\leq\pi/2-\varepsilon$, where that expansion is uniform,
and the order $na=j+\alpha_n$ is at least $\alpha_n\sim n/2$.
This includes $j/n\to\infty$, when $\lambda/a\to0$.

The prefactor $(a^2+\lambda^2)^{-1/4}$ is uniformly bounded for
$a\geq\alpha_n/n$ and $\lambda\in\mathcal K$.  Combining
\eqref{eq:uniform-bessel-debye} with Stirling's formula therefore gives
\[
 |\beta_j(\lambda)|\leq C_{\mathcal K}e^{nF(a_{n,j},\lambda)},\qquad
 \frac1n\ln|\beta_0(\lambda)|
   =F(a_{n,0},\lambda)+O_{\mathcal K}(n^{-1}),
\]
for all sufficiently large $n$, with $C_{\mathcal K}$ independent of $j$.
Moreover,
\[
 \frac{\partial F}{\partial a}
 =-\operatorname{Re}\log
       \frac{a+\sqrt{a^2+\lambda^2}}{\lambda}<0.
\]
The strict sign follows because the logarithm is the principal
$\operatorname{arcsinh}(a/\lambda)$, whose real part is positive in this
half-plane.  The upper bound for every $j$ and the lower bound from $j=0$
now give
\[
 \Xi_n(\lambda)=F(a_{n,0},\lambda)+O_{\mathcal K}(n^{-1})
 \longrightarrow F(\tfrac12,\lambda)
\]
locally uniformly in the right half-plane.  Evaluating $F(\tfrac12,\lambda)$
gives the stated formula.

\end{proof}

\begin{example}[Spheres]\label{ex:spherical-Theta}
For $\operatorname{Re}\lambda>0$, put
$w(\lambda)=\sqrt{1+4\lambda^2}$, with the branch having positive
real part.  Then
\begin{equation}\label{eq:spherical-Theta}
 \Theta(\lambda)=\frac12\left(
   \operatorname{Re}w(\lambda)-1
   -\ln\left|\frac{1+w(\lambda)}2\right|\right).
\end{equation}
On the left half-plane use $\Theta(-\lambda)=\Theta(\lambda)$;
on the imaginary axis use the continuous boundary values.  Explicitly,
\begin{equation}\label{eq:spherical-Theta-axis}
 \Theta(\mathrm i\eta)=
 \begin{cases}
 \displaystyle\frac12\left(\sqrt{1-4\eta^2}-1
       -\ln\frac{1+\sqrt{1-4\eta^2}}2\right),
       & |\eta|\leq\frac12,\\[4pt]
 \displaystyle-\frac12(1+\ln|\eta|),&|\eta|\geq\frac12.
 \end{cases}
\end{equation}
In particular, $\Theta(0)=0$.

Lemma~\ref{lem:spherical-Xi-convergence} gives the formula in the right
half-plane, and symmetry gives the left half-plane.  Let $\widetilde\Theta$ denote the
continuous extension of the displayed formula to the imaginary axis,
with values \eqref{eq:spherical-Theta-axis}.  We have $\Theta=\widetilde\Theta$ off
that axis.  At a point on the axis, upper semicontinuity gives
$\Theta\geq\widetilde\Theta$, while the submean inequality on shrinking disks gives
$\Theta\leq\widetilde\Theta$: their area integrals agree since the axis has zero area.
Thus equality holds everywhere.
\end{example}

\begin{lemma}[Spherical decay]\label{lem:spherical-perron-tails}\label{lem:spherical-tails}
Fix $R\geq0$ and $\sigma>0$.  There is a constant $C_{R,\sigma}$,
independent of $n$ and $\eta$, such that
\begin{equation}\label{eq:spherical-pointwise-tail}
 V_{n,R}(\sigma+\mathrm i\eta)
 \leq C_{R,\sigma}-\frac14\ln|\eta|
 \qquad(n\geq4,\ |\eta|\geq1).
\end{equation}
\end{lemma}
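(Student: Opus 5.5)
Since $\psi_{n,R}$ is continuous, the Perron envelope satisfies $V_{n,R}\le\psi_{n,R}$ pointwise. So $V_{n,R}\le\mathsf h=|\operatorname{Re}\lambda|$ on $\mathbb C$, and $V_{n,R}(\mathrm i\eta)\le R+\Xi_n(\mathrm i\eta)$ on the imaginary axis. The plan is to prove a decay estimate for $\Xi_n$ on the imaginary axis, uniform in $n\ge4$. We then transport it to the vertical line $\operatorname{Re}\lambda=\sigma$ by comparing $V_{n,R}$ with a harmonic majorant in the right half-plane. Only the axis bound needs the explicit Bessel structure; the transport step uses nothing beyond $V_{n,R}\le\mathsf h$ and subharmonicity.

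\emph{Step 1 (axis bound).} Start from \eqref{eq:spherical-Xi-imaginary}. Use Landau's uniform bound $\sup_{\nu\ge0}|J_\nu(x)|\le b\,x^{-1/3}$ for $x>0$, with an absolute constant $b$. Together with Stirling's bound $\Gamma(\alpha_n+1)\le C\sqrt{\alpha_n}\,(\alpha_n/e)^{\alpha_n}$ and $\alpha_n/n\le\tfrac12$, this gives
\[
 \Xi_n(\mathrm i\eta)\le\frac{\alpha_n}{n}\bigl(\ln\tfrac{2\alpha_n}{n}-1-\ln|\eta|\bigr)+\frac{C'\ln n}{n}
 \le C''-\frac{\alpha_n}{n}\ln|\eta|.
\]
For $|\eta|\ge1$ we have $\ln|\eta|\ge0$, and $\alpha_n/n=(n-2)/(2n)\ge\tfrac14$ for $n\ge4$. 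This is exactly where the exponent $\tfrac14$ and the restriction $n\ge4$ come from. Hence $\Xi_n(\mathrm i\eta)\le C_1-\tfrac14\ln|\eta|$ for $|\eta|\ge1$, and $\Xi_n(\mathrm i\eta)\le0$ for all $\eta$ by Proposition~\ref{lem:polynomial-support-bound}. So on the axis $V_{n,R}(\mathrm i\eta)\le f(\eta)$, where
\[
 f(\eta)=\min\bigl\{0,\;R+C_1-\tfrac14\ln\max\{|\eta|,1\}\bigr\},
\]
which does not depend on $n$.

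\emph{Step 2 (harmonic majorant).} On the right half-plane $\{\operatorname{Re}\lambda>0\}$, put $u(\lambda)=\operatorname{Re}\lambda+P[f](\lambda)$, where $P[f]$ is the Poisson integral of $f$. This is well defined, since $|f|$ grows only logarithmically. A direct estimate of the half-plane Poisson kernel at $\sigma+\mathrm i\eta$ gives $P[f](\sigma+\mathrm i\eta)\le C_2(R,\sigma)-\tfrac14\ln|\eta|$ for $|\eta|\ge1$. The kernel has width $\sigma$ around $\eta$, and $\ln|\eta'|$ is nearly constant on $|\eta'-\eta|\le|\eta|/2$; the remaining mass of the kernel is $O(\sigma/|\eta|)$ against a logarithmic integrand.

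\emph{Step 3 (Phragm\'en--Lindel\"of).} The function $w=V_{n,R}-u$ is subharmonic on the open half-plane. Its upper boundary limits on the axis are at most $V_{n,R}(\mathrm i\eta)-f(\eta)\le0$, using upper semicontinuity and continuity of $u$ up to the boundary. It also satisfies $w\le-P[f]=O(\ln(2+|\lambda|))$, because $V_{n,R}\le\operatorname{Re}\lambda$. The Phragm\'en--Lindel\"of principle for a half-plane allows growth $o(|\lambda|)$, so $w\le0$. Evaluating at $\sigma+\mathrm i\eta$ gives \eqref{eq:spherical-pointwise-tail} with $C_{R,\sigma}=\sigma+C_2$.

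I expect Step 1 to be the main obstacle: the $J$-bound must be uniform in the order $j+\alpha_n$, which is unbounded, and in $n$. Landau's inequality handles this cleanly. If one prefers to avoid it, the fallback is the crude estimate $|J_\nu(x)|\le1$. That estimate already gives decay coefficient $\alpha_n/n-o(1)$ when the logarithmic errors are absorbed into the constant, which suffices once $n$ is large. Small $n\ge4$ then require a separate check, using the $x^{-1/2}$ decay of $J_\nu$ uniformly in $\nu$ on $x\ge1$.
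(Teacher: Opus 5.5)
Your argument is correct, but it follows a different route from the paper's. The paper never leaves the line $\operatorname{Re}\lambda=\sigma$. Schl\"afli's integral gives $|J_v(z)|\le e^{|\operatorname{Im}z|}+1/(\pi\operatorname{Re}z)$ for $\operatorname{Re}z>0$, uniformly in $v\ge0$. The $I$--$J$ connection formula turns this into $|I_v(n(\sigma+\mathrm i\eta))|\le 2e^{n\sigma}$ for $|\eta|\ge1$. Stirling for the prefactor in \eqref{eq:spherical-Xi}, together with $|\sigma+\mathrm i\eta|\ge|\eta|$ and the pointwise bound $V_{n,R}\le R+\Xi_n$, then finishes. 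No subharmonicity of $V_{n,R}$ and no potential theory are used.

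You instead estimate the obstacle only on the imaginary axis, where real-argument Bessel bounds suffice. You then push the decay into the half-plane with a Poisson majorant and Phragm\'en--Lindel\"of, using $V_{n,R}\le\mathsf h$ and subharmonicity of $V_{n,R}$. The paper's route is shorter and is purely an estimate on $\Xi_n$. Yours avoids complex-argument Bessel estimates and proves something structural: logarithmic decay of the obstacle on the axis alone forces the same decay of the Perron solution on every vertical line, since the values of $\Xi_n$ off the axis are never used. Both give $C_{R,\sigma}=R+\sigma+O(1)$.

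Two small corrections.

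\emph{Landau's inequality is unnecessary, and the worry in your last paragraph is unfounded.} The factor $|\eta|^{-\alpha_n}$ comes entirely from the explicit prefactor $(2/(n|\eta|))^{\alpha_n}$ in \eqref{eq:spherical-Xi-imaginary}. Since $\frac1n\ln[\Gamma(\alpha_n+1)(2/n)^{\alpha_n}]$ is bounded uniformly in $n$, the classical bound $|J_\nu(x)|\le1$ ($\nu\ge0$, $x$ real) already gives $\Xi_n(\mathrm i\eta)\le C_1-\frac{\alpha_n}{n}\ln|\eta|$ with the exact coefficient, for every $n\ge4$. There is no $o(1)$ loss and no separate small-$n$ check; this is the exact analogue of the paper's $|I_v|\le 2e^{n\sigma}$.

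\emph{You should take $C_1\ge0$ (or at least $C_1\ge-R$) when defining $f$.} Otherwise $f(\eta)=R+C_1<0$ for $|\eta|<1$, but $V_{n,R}(0)=0$. Indeed, $v_g$ with $g=1$ is a subharmonic minorant of $\psi_{n,R}$ with $v_g(0)=0$, so the boundary comparison in Step~3 would fail near $\eta=0$. Enlarging $C_1$ fixes this at no cost.
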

\begin{proof}
Schl\"afli's integral~\cite[Eq.~10.9.6]{DLMF} gives, for real $v\geq0$
and $\operatorname{Re}z>0$,
\[
 J_v(z)=\frac1\pi\int_0^\pi\cos(z\sin t-vt)\,dt
       -\frac{\sin\pi v}{\pi}\int_0^\infty e^{-z\sinh t-vt}\,dt.
\]
The first term has modulus at most $e^{|\operatorname{Im}z|}$;
using $\sinh t\geq t$ bounds the second by $1/(\pi\operatorname{Re}z)$.
Apply the connection formula between $I_v$ and $J_v$
\cite[Eq.~10.27.6]{DLMF} with $z=n|\eta|-\mathrm i n\sigma$, using
conjugation when $\eta<0$.  Uniformly in $v\geq0$ and $|\eta|\geq1$,
\[
 |I_v(n(\sigma+\mathrm i\eta))|
 \leq e^{n\sigma}+\frac1{\pi n|\eta|}\leq2e^{n\sigma}.
\]
Applying Stirling's formula to the Gamma factor in
\eqref{eq:spherical-Xi} gives
\[
 \frac1n\ln\left[\Gamma(\alpha_n+1)
       \left(\frac2n\right)^{\alpha_n}\right]
 =-\frac12+O\left(\frac{\ln n}{n}\right).
\]
Since $\alpha_n/n=(n-2)/(2n)\geq1/4$ for $n\geq4$ and
$|\sigma+\mathrm i\eta|\geq|\eta|$, the bound
$V_{n,R}\leq R+\Xi_n$ gives the assertion.
\end{proof}

For $R\geq0$, define the obstacle
\[
 \psi_R(\lambda)=\min\{\mathsf h(\lambda),R+\Theta(\lambda)\}.
\]
Consider the obstacle problem of finding its largest subharmonic minorant:
\begin{equation}\label{eq:perron-problem}
 \mathcal V(R)=\{v\text{ subharmonic on }\mathbb C:
       v\leq\psi_R\},
 \qquad
 V_R=\bigl(\sup_{v\in\mathcal V(R)}v\bigr)^*.
\end{equation}
This construction, due to Perron, is classical in potential theory; see
\cite[Chapters~2 and~4]{Ransford1995}.  Here the obstacle comes from the two
bounds on the Laplace transform.
Since $\mathsf h$ is continuous, $V_R\leq\mathsf h$.
Define the slope
\[
 m_R(\sigma)=\sup_{\eta\in\mathbb R}V_R(\sigma+\mathrm i\eta),
 \qquad c(R)=\inf_{\sigma>0}\frac{m_R(\sigma)}\sigma.
\]

\begin{theorem}[Delsarte rate lower bound]\label{thm:barrier}
Let $R\geq0$ and $-1<s<1$.  If $c(R)<s$, then
\[
 \liminf_{n\to\infty}\frac1n\ln D_X(s)\geq R,
\]
where $X=H_n$ in the binary case and $X=S^{n-1}$ in the spherical case.
In the notation of the introduction,
\[
 \underline R_D\bigl(\tfrac{1-s}{2}\bigr)\geq\frac R{\ln2}
 \qquad\text{and}\qquad
 \underline R_D^{\mathbb S}(s)\geq R,
\]
respectively.
\end{theorem}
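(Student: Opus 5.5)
The plan is to deduce the statement from the finite-dimensional criterion, Theorem~\ref{thm:finite-barrier}. Since $c(R)<s$, fix $\sigma>0$ with $m_R(\sigma)<s\sigma$, and put $\varepsilon=(s\sigma-m_R(\sigma))/3$. It suffices to show that
\[
 \limsup_{n\to\infty}q_{n,R}(\sigma)\le m_R(\sigma).
\]
Then for large $n$ we have $q_{n,R}(\sigma)+\ln2/n\le s\sigma$, and Theorem~\ref{thm:finite-barrier} gives $\frac1n\ln D_X(s)\ge R$ for all large $n$. The binary and spherical forms follow by substituting $s=1-2d/n$ and converting logarithms. In the binary case, $d=\lceil\delta n\rceil$ gives $s_{n,d}\le s$ only up to $O(1/n)$. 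I would absorb this by choosing $s'<s$ with $c(R)<s'$, using monotonicity of $D_X$ in the threshold.

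The core step is a convergence statement for the finite Perron solutions: $\limsup_n V_{n,R}\le V_R$, uniformly on the vertical line $\Re\lambda=\sigma$ (at least on compact pieces). I would first take $W=(\limsup_n V_{n,R})^*$. Because the $V_{n,R}$ are subharmonic and locally uniformly bounded above by $\mathsf h$, the same result \cite[Theorem~2.4.7]{DinhSibony2005} makes $W$ subharmonic. Next, $V_{n,R}\le\min\{\mathsf h,R+\Xi_n\}$, and $\mathsf h$ is continuous, so
\[
 W\le\min\{\mathsf h,\,R+(\limsup\Xi_n)^*\}=\psi_R .
\]
Hence $W\in\mathcal V(R)$ and $W\le V_R$. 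Upgrading this pointwise $\limsup$ to a uniform statement on a compact set $\{\sigma+\mathrm i\eta:|\eta|\le T\}$ should follow from Hartogs' lemma: a sequence of subharmonic functions bounded above, whose $\limsup$ is at most a continuous function, has $\sup_K V_{n,R}\le \max_K V_R+\varepsilon$ eventually. For this I need $m_R$ (or $V_R$ on the line) to be upper semicontinuous and bounded, which it is, since $V_R$ is usc and $V_R\le\mathsf h$. Hartogs' lemma requires a continuous majorant, so I would apply it with a continuous function slightly above $V_R$ on $K$, obtained as a decreasing continuous approximation of the usc function. This yields $V_{n,R}(\sigma+\mathrm i\eta)\le m_R(\sigma)+\varepsilon$ for $|\eta|\le T$ and $n$ large.

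The remaining issue, which I expect to be the main obstacle, is controlling the integral in \eqref{eq:finite-fourier-factor}. In the binary case $\mathcal I_n=[0,\pi]$ is compact, so $Q_{n,R}^2\le \frac{n+1}{\pi}\cdot\pi\, e^{2n(m_R+\varepsilon)}$ and $q_{n,R}\le m_R+\varepsilon+O(\ln n/n)$. In the spherical case the domain is $\mathbb R$. Here I would split at $|\eta|=T$ and use Lemma~\ref{lem:spherical-tails}: for $|\eta|\ge T$ it gives
\[
 e^{2nV_{n,R}}\le e^{2nC_{R,\sigma}}|\eta|^{-n/2},
\]
which is integrable for $n\ge4$, with tail integral at most $e^{2nC}T^{1-n/2}$. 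Choosing $T$ large, namely $\frac12\ln T> 2C-2(m_R+\varepsilon)$, makes the tail exponentially smaller than $e^{2n(m_R+\varepsilon)}$. The compact part contributes at most $2T\cdot\frac n\pi e^{2n(m_R+\varepsilon)}$. Altogether $q_{n,R}(\sigma)\le m_R(\sigma)+\varepsilon+o(1)$, and since $\varepsilon$ was fixed with $m_R+2\varepsilon<s\sigma$, the criterion holds for large $n$. The delicate point is the Hartogs step: it needs the uniform upper bound from $\mathsf h$, and it needs the comparison with $\psi_R$ to pass through the usc envelope correctly.
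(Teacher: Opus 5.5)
Your proposal is correct and follows the paper's proof essentially step for step. You pick $\sigma$ with $m_R(\sigma)<s\sigma$ and show $\limsup_n q_{n,R}(\sigma)\le m_R(\sigma)$: maximality of the Perron solution gives $(\limsup_n V_{n,R})^*\le V_R$, Hartogs' lemma then controls a compact segment, and Lemma~\ref{lem:spherical-tails} handles the spherical tail, after which you invoke Theorem~\ref{thm:finite-barrier} with the strict gap absorbing $\ln2/n$ and the binary rounding. One simplification: in the binary case $\Xi_n=\Theta$ for every $n$, so $V_{n,R}=V_R$ and no Hartogs argument is needed there.
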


\begin{lemma}
\label{lem:fourier-factor-asymptotics}
For fixed $R\geq0$ and $\sigma>0$, in either setting,
\[
 \limsup_{n\to\infty}q_{n,R}(\sigma)\leq m_R(\sigma).
\]
\end{lemma}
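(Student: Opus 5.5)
The plan is to reduce the statement to a uniform upper bound on $V_{n,R}$ along the vertical line $\operatorname{Re}\lambda=\sigma$, and to obtain that bound by comparing the limit of the $V_{n,R}$ with the Perron solution $V_R$. First note that $L_nc_n$ grows polynomially in $n$ (it equals $(n+1)/\pi$ or $n/\pi$), so it contributes $O(\ln n/n)$ to $q_{n,R}$. The claim therefore follows once we show: for every $\varepsilon>0$,
\[
 \frac1{2n}\ln\int_{\mathcal I_n}e^{2nV_{n,R}(\sigma+\mathrm i\eta)}\,d\eta\le m_R(\sigma)+\varepsilon
\]
for all large $n$. We may assume $m_R(\sigma)<\infty$; this is automatic, since $V_R\le\mathsf h$ gives $m_R(\sigma)\le\sigma$.

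\emph{Step 1 (limit function is a competitor).} Put $W=(\limsup_n V_{n,R})^*$. Each $V_{n,R}$ is subharmonic and satisfies $V_{n,R}\le\psi_{n,R}\le\mathsf h$, so the family is locally uniformly bounded above. By \cite[Theorem~2.4.7(2)]{DinhSibony2005}, $W$ is subharmonic. Pointwise,
\[
 \limsup_n V_{n,R}\le\min\{\mathsf h,\ R+\limsup_n\Xi_n\}.
\]
Taking upper semicontinuous envelopes, and using that the envelope of a minimum is at most the minimum of the envelopes and that $\mathsf h$ is continuous, we get $W\le\min\{\mathsf h,R+\Theta\}=\psi_R$. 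Hence $W\in\mathcal V(R)$, and so $W\le V_R$. In particular $W(\sigma+\mathrm i\eta)\le m_R(\sigma)$ for all $\eta$.

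\emph{Step 2 (Hartogs' lemma on compact segments).} Fix $T>0$ and the compact segment $K_T=\{\sigma+\mathrm i\eta:|\eta|\le T\}$. We apply Hartogs' lemma to the subharmonic sequence $V_{n,R}$, which is locally bounded above and has regularized limsup $W$. On $K_T$ we have $W<m_R(\sigma)+\varepsilon$, a constant and hence continuous function. Hartogs' lemma then gives
\[
 \sup_{K_T}V_{n,R}<m_R(\sigma)+\varepsilon\qquad\text{for all large }n.
\]
In the binary case $\mathcal I_n=[0,\pi]$ is already compact, so taking $T=\pi$ finishes the proof: the integral is at most $\pi e^{2n(m_R(\sigma)+\varepsilon)}$.

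\emph{Step 3 (spherical tails).} In the spherical case $\mathcal I_n=\mathbb R$, and the tail is controlled by Lemma~\ref{lem:spherical-tails}. For $|\eta|\ge1$ and $n\ge4$ it gives $e^{2nV_{n,R}}\le e^{2nC_{R,\sigma}}|\eta|^{-n/2}$, so
\[
 \int_{|\eta|>T}e^{2nV_{n,R}(\sigma+\mathrm i\eta)}\,d\eta\le \frac{4}{n-2}\,e^{2nC_{R,\sigma}}\,T^{1-n/2}.
\]
Its normalized logarithm is at most $2C_{R,\sigma}-\tfrac12\ln T+o(1)$. Choosing $T$ so large that $C_{R,\sigma}-\tfrac14\ln T<m_R(\sigma)$ makes this tail negligible. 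Combining it with Step 2 on $K_T$ gives the bound, and since $\varepsilon$ is arbitrary, the lemma follows.

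\emph{Main obstacle.} The delicate point is Step 1: checking that the regularized limit really lies below the limiting obstacle $\psi_R$. This relies on $\Theta$ being defined as the envelope of $\limsup\Xi_n$ and on the continuity of $\mathsf h$. A second point is that Hartogs' lemma must be applied in the form comparing with a continuous majorant of $W$ (here a constant), because $V_R$ itself need not be continuous.
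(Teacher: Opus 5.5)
Your proof is correct and follows the paper's argument: show that $(\limsup_n V_{n,R})^*$ lies below $\psi_R$ and hence below $V_R$ by maximality, apply Hartogs' lemma on a compact vertical segment, and control the spherical tail with Lemma~\ref{lem:spherical-tails}. The only difference is in the binary case, where the paper skips Hartogs' lemma because $\Xi_n$ is independent of $n$, so $V_{n,R}=V_R$ directly and the bound is immediate on $[0,\pi]$.
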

\begin{proof}
For any compact interval $I$ of positive length, bounding the integrand
by its supremum gives
\[
 \frac1{2n}\ln\int_I e^{2nV_{n,R}(\sigma+\mathrm i\eta)}\,d\eta
 \leq\sup_{\eta\in I}V_{n,R}(\sigma+\mathrm i\eta)
       +\frac{\ln|I|}{2n}.
\]
The prefactor $L_nc_n$ in \eqref{eq:finite-fourier-factor} contributes
only $O(\ln n/n)$.  For binary codes, take $I=[0,\pi]$ and use
$V_{n,R}=V_R$ to obtain the claim.

For spheres, fix any real $a>m_R(\sigma)$.
Let $C=C_{R,\sigma}$ be the constant in
Lemma~\ref{lem:spherical-perron-tails}, and choose $A\geq1$ so that
$C-(\ln A)/4<a$.  Integrating the pointwise decay bound gives
\begin{equation}\label{eq:spherical-tail-majorant}
 \int_{|\eta|>A}e^{2nV_{n,R}(\sigma+\mathrm i\eta)}\,d\eta
 \leq\frac{2e^{2nC}}{n/2-1}\,A^{1-n/2}
 \qquad(n\geq4).
\end{equation}
Taking logarithms and dividing by $2n$ shows that, for all sufficiently
large $n$,
\begin{equation}\label{eq:spherical-perron-tail}
 \int_{|\eta|>A}e^{2nV_{n,R}(\sigma+\mathrm i\eta)}\,d\eta
 \leq e^{2na}.
\end{equation}
The subharmonic upper limit
\[
 \overline V=(\limsup_n V_{n,R})^*\leq\min\{\mathsf h,R+\Theta\}
\]
satisfies $\overline V\leq V_R$ by maximality of the Perron solution.
By upper semicontinuity, $V_R<a$ on a neighborhood of the compact segment
$\sigma+\mathrm i[-A,A]$.  Hartogs' lemma~\cite[Theorems~2.4.7--2.4.9]{DinhSibony2005},
applied there, gives
$\sup_{|\eta|\leq A}V_{n,R}(\sigma+\mathrm i\eta)\leq a$
for all sufficiently large $n$ (also if $\overline V\equiv-\infty$).
Splitting the integral at $A$
bounds it by $(2A+1)e^{2na}$.  Hence
$\limsup_n q_{n,R}(\sigma)\leq a$, and letting $a\downarrow m_R(\sigma)$
proves the assertion.
\end{proof}

\begin{proof}[Proof of Theorem~\ref{thm:barrier}]
Since $c(R)<s$, choose $\sigma>0$ with $m_R(\sigma)<s\sigma$.
Lemma~\ref{lem:fourier-factor-asymptotics} implies that, for all
sufficiently large $n$,
\[
 q_{n,R}(\sigma)+\frac{\ln2}{n}\leq s\sigma.
\]
The strict gap absorbs the asymptotic error, $\ln2/n$, and the
$O(1/n)$ rounding of the binary threshold.
Theorem~\ref{thm:finite-barrier} therefore gives
$\frac1n\ln D_X(s)\geq R$ for all sufficiently large $n$.
Taking the lower limit proves the claim.
\end{proof}

\begin{remark}[Dropping subharmonic gives Gilbert--Varshamov]
\label{rem:gv-obstacle}
If the subharmonic hypothesis in \eqref{eq:perron-problem} is dropped,
the envelope is simply $\psi_R$.  Since $\mathsf h(\sigma+\mathrm i\eta)=\sigma$
for $\sigma>0$ and $\Theta(\sigma+\mathrm i\eta)\le\Theta(\sigma)$,
its slope is
\[
 c_{\mathrm{rel}}(R)=\inf_{\sigma>0}
 \min\left\{1,\frac{R+\Theta(\sigma)}\sigma\right\}.
\]
The inequality for $\Theta$ follows from the absolute-value bound on
its Laplace coefficients; equality holds on the real axis.
For $0<s<1$, therefore,
\[
 c_{\mathrm{rel}}(R)<s
 \quad\Longleftrightarrow\quad
 R<\sup_{\sigma>0}\{s\sigma-\Theta(\sigma)\}.
\]
For binary codes $\Theta(\sigma)=\ln\cosh\sigma$, and this Legendre
transform is
$\ln2-H_{\mathrm{bin}}((1-s)/2)$, where
$H_{\mathrm{bin}}(p)=-p\ln p-(1-p)\ln(1-p)$.
Thus this relaxation gives the Gilbert--Varshamov exponent
$1-H_{\mathrm{bin}}(\delta)/\ln2$ in bits, where $\delta=(1-s)/2$.
For spheres,
\[
 \Theta(\sigma)=\sup_{-1<t<1}
 \left\{\sigma t+\tfrac12\ln(1-t^2)\right\},
\]
so the transform is $-\tfrac12\ln(1-s^2)$, the spherical
cap-volume (Shannon) exponent in nats.
Because $V_R\le\psi_R$, these relaxed slopes already give lower bounds
through Theorem~\ref{thm:barrier}, by approaching the strict rate threshold
from below.
\end{remark}

\section{The elliptic solution of the obstacle problem}\label{sec:elliptic-obstacle}

The previous section established an obstacle problem, phrased as maximization over
subharmonic functions less than or equal to $\psi_R=\min\{\mathsf h,R+\Theta\}$.  In this section, we solve this obstacle problem and give
an expression in terms of elliptic integrals.  To do this,
we construct a real potential $\mathcal W$,
normalized by $\mathcal W(0)=0$, and prove that it equals $V_R$.

\subsection{The solution to the obstacle problem}

The obstacle problem from the previous section lives in the ``$\lambda$-plane'',
the space underlying Laplace transformations of feasible functions for the Delsarte
program.  By conformal invariance, we will conformally map a strip in the $\lambda$-plane to a ``$w$ half-plane'',
the right half-plane, where the binary and spherical code obstacle problems can be solved in a
uniform way.

\begin{definition}[Coordinates and obstacles]\label{def:elliptic-coordinates}
Use $w=\lambda$ for spheres and $w=\sinh(2\lambda)$ on the binary
half-strip $\Re\lambda>0$, $|\Im\lambda|<\pi/4$.  Write $W$ for the
potential in the $w$-coordinate, so that
$\mathcal W(\lambda)=W(\lambda)$ for spheres and
$\mathcal W(\lambda)=W(\sinh(2\lambda))$ for binary codes before
reflection.  The binary obstacles in this coordinate are
\[
 H(w)=\tfrac12\Re\operatorname{arcsinh}w,\qquad
 T(w)=\tfrac12\ln\left|\frac{1+\sqrt{1+w^2}}2\right|;
\]
these are $\mathsf h$ and $\Theta$, respectively, after the change of
variables.  Branches are continued from the positive real axis.
For spheres $\Theta(\mathrm iy)=-\tfrac12-\tfrac12\ln y$ when
$y\ge1/2$; for binary codes $T(\mathrm iy)=\tfrac12\ln(y/2)$ when
$y\ge1$.
\end{definition}

The contact set must be supported on the imaginary axis.  Consider the case of spherical codes first.  Since $\mathsf h$ and $\Theta$ are both harmonic on the right half-plane, $-\psi_R$ is subharmonic there.  By the strong maximum principle, either $V_R < \psi_R$ on the whole half-plane, or they coincide.  Since they cannot coincide, we cannot have any contact on the right half-plane, similarly the left half-plane, and the contact set must be supported on the imaginary axis.  The same logic works for binary codes in the $w$-coordinate as well.  This suggests the following proposal for contact set.

\begin{definition}[Contact set]\label{def:elliptic-contacts}
Choose $0<\kappa<1$, with $K,E,K',E'$ as in the introduction, and put
$\ell=\pi K'/K$.  For $0<a<b$ with $a/b=\kappa$, the proposed contact set
in the $w$-plane is
\begin{equation}\label{eq:spherical-contact-guess}
 C_{a,b}=\mathrm i[-a,a]\cup\{\mathrm iy:|y|\ge b\}.
\end{equation}
We will choose $a$ and $b$ as follows:
\[
\begin{array}{c|cc|c}
 \text{branch}&a&b&\text{free boundary endpoints}\\ \hline
 \text{spherical}&\kappa K/\pi&K/\pi&\mathrm ia,\ \mathrm ib\\
 \text{binary }+&1&1/\kappa&\mathrm ib\\
 \text{binary }-&\kappa&1&\mathrm ia
\end{array}
\]
The binary endpoint at $w=\mathrm i$ is fixed by the corner $\lambda=\mathrm i\pi/4$ of the half-strip.
\end{definition}

For ease of reference, we will refer to the segments $\mathrm i[a,b]$ and $\mathrm i[-b,-a]$ as the gaps.

\begin{definition}[Elliptic potential]
\label{def:elliptic-potential}
In the right half-plane define
\[
 P(w)=\sqrt{w^2+a^2}\sqrt{w^2+b^2},\qquad
 \rho(t)=\frac1{\sqrt{(t^2-a^2)(t^2-b^2)}}\quad(t>b),
\]
where both square roots in $P$ are positive on the positive real axis.
The substitution $t=b/\sin\theta$ gives
\begin{equation}\label{eq:elliptic-density-mass}
 \int_b^\infty\rho(t)\,dt
 =\frac1b\int_0^{\pi/2}\frac{d\theta}{\sqrt{1-\kappa^2\sin^2\theta}}
 =\frac Kb.
\end{equation}
Set
\[
 I(w)=\int_b^\infty\frac{\rho(t)}{t^2+w^2}\,dt.
\]
We now define the harmonic primitive $W$ as follows:
\begin{equation}\label{eq:elliptic-primitive}
 W(w)=\Re\int_0^w\Phi(z)\,dz,\qquad
 \Phi_{\mathrm S}(w)=\frac{P(w)I(w)}\pi,\qquad
 \Phi_{\mathrm B}(w)=\frac{P(w)}\pi
       \left(\frac{K}{b(1+w^2)}-I(w)\right).
\end{equation}
$W$ is normalized by $W(0)=0$.  It is path independent in the
right half-plane, and continuous on its finite boundary.  Reflect evenly
across the imaginary axis in the spherical case.  In the binary case
pull back by $w=\sinh(2\lambda)$, then reflect across the axis and the
horizontal sides of the half-strip.  Denote the resulting functions of
$\lambda$ by $\mathcal W_{\mathrm S}$ and $\mathcal W_{\mathrm B}$.
The binary extension is $\mathrm i\pi/2$-periodic.
\end{definition}

Consider the involution $\iota(w)=ab/w$ of $\mathbb{P}^1$
that preserves $C_{a,b}\cup\{\infty\}$, exchanging the central
contact segment with the outer rays and the upper and lower gaps with
their endpoints reversed.  This involution lifts to the smooth
compactification of $Y^2=(w^2+a^2)(w^2+b^2)$, which we write as $E_{a,b}$.
\[
 \iota_E:E_{a,b}\longrightarrow E_{a,b},\qquad
 (w,Y)\longmapsto\left(\frac{ab}{w},\frac{ab}{w^2}Y\right).
\]
With origin $O=(0,ab)$, the same map is subtraction in the elliptic
group law: $\iota_E(p)=T-p$, where $T$ is the point over infinity
with $Y/w^2\to1$ and $2T=O$.

On the sheet $Y=P(w)$, with boundary values from the right half-plane,
write $\alpha=\Phi(w)\,dw$.
Let $\gamma$ be the lift of the lower gap from $(-\mathrm ia,0)$
to $(-\mathrm ib,0)$ on the sheet $Y=P(w)$, using boundary values
from the right half-plane.
Since $W(-\mathrm ia)=0$ and $W(-\mathrm ib)=W(\mathrm ib)$,
\[
 W(\mathrm ib)=\Re\int_\gamma\alpha.
\]

The central segment has $W=0$ because $\Phi$ is real there, and
$W(\overline w)=W(w)$ by conjugation symmetry.

The differential $\alpha$ combines rational functions and an integral $I$
related to elliptic integrals of the third kind.  However, the symmetry under
$\iota_E$ will allow us to cancel the contribution of $I$.

\begin{proposition}
\label{prop:elliptic-gap-inversion}
For each of the spherical and binary branches,
\[
 -W(\mathrm ib)=\Re\int_\gamma\iota_E^*\alpha.
\]
\end{proposition}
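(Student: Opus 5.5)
The plan is to move the pullback onto the image path by change of variables, then read off the answer from the boundary values of $W$ on the upper gap. Concretely, I would first use $\int_\gamma\iota_E^*\alpha=\int_{\iota_E\circ\gamma}\alpha$. Then I would identify $\iota_E\circ\gamma$ as a path on $E_{a,b}$: both where it lies over the $w$-line and on which sheet.

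\textbf{Locating the image path.} Over the $w$-line, $\iota(w)=ab/w$ sends $-\mathrm ia\mapsto \mathrm ib$ and $-\mathrm ib\mapsto\mathrm ia$. Since $\iota(-\mathrm iy)=\mathrm i\,ab/y$, the lower gap traversed from $-\mathrm ia$ to $-\mathrm ib$ is mapped onto the upper gap traversed from $\mathrm ib$ down to $\mathrm ia$. Moreover, $\Re(ab/w)=ab\,\Re w/|w|^2$, so $\iota$ preserves the open right half-plane. Hence boundary values taken from $\Re w>0$ go to boundary values taken from $\Re w>0$.

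\textbf{Preserving the sheet.} Next I would check that $\iota_E$ maps the sheet $Y=P(w)$ over the right half-plane to itself. Squaring gives $(ab/w^2)^2P(w)^2=(a^2b^2/w^2+a^2)(a^2b^2/w^2+b^2)=P(ab/w)^2$, so $P(ab/w)=\pm (ab/w^2)P(w)$. The sign is constant on the connected right half-plane. At positive real $w$ both sides are positive, so the sign is $+$. Thus $\iota_E(w,P(w))=(ab/w,P(ab/w))$ throughout $\Re w>0$, and by continuity also for the boundary values on the gaps. It follows that $\iota_E\circ\gamma$ is the lift of the upper gap from $\mathrm ib$ to $\mathrm ia$ on the sheet $Y=P$, with right-half-plane boundary values. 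Along this lift, $\alpha=\Phi(w)\,dw$, with $\Phi$ exactly the function defining $W$.

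\textbf{Evaluating the integral.} Since $W=\Re\int_0^w\Phi$ is path independent in the right half-plane and continuous up to the finite boundary, we get
\[
 \Re\int_{\iota_E\circ\gamma}\alpha=W(\mathrm ia)-W(\mathrm ib).
\]
Finally, $\mathrm ia$ is an endpoint of the central segment, where $W=0$, so the right side is $-W(\mathrm ib)$, which is the claim. The same argument applies to $\Phi_{\mathrm S}$ and $\Phi_{\mathrm B}$. The extra term $K/(b(1+w^2))$ in the binary case is rational, so it does not affect the sheet bookkeeping.

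\textbf{Main obstacle.} The delicate step is justifying integration along the boundary, so that the boundary line integral equals the difference of boundary values of $W$.

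On the open gaps $\mathrm i(a,b)$ there is no difficulty. The function $I$ is analytic there, because its singularities occur where $t^2+w^2=0$ with $t>b$, i.e.\ on $|\Im w|>b$. Also $P$ extends continuously there.

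At the endpoint $\mathrm ib$, the density behaves like $\rho(t)\sim c\,(t-b)^{-1/2}$, which makes $I(w)$ blow up like $|w-\mathrm ib|^{-1/2}$. This is compensated by $P(w)\sim c'|w-\mathrm ib|^{1/2}$, so $\Phi$ stays bounded. At $\mathrm ia$, $\Phi$ is bounded (in fact it vanishes there).

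With $\Phi$ bounded near both endpoints, I would approximate the gap by paths in $\Re w>0$ converging to it and apply dominated convergence. This gives the boundary identity $\Re\int\Phi\,dw=W(\mathrm ia)-W(\mathrm ib)$ along the gap.
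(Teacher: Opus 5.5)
Your argument is the paper's own: change variables to $\iota_E\circ\gamma$, identify it as the lift of the upper gap on the same sheet traversed from $\mathrm ib$ to $\mathrm ia$, and read off $W(\mathrm ia)-W(\mathrm ib)=-W(\mathrm ib)$. Your checks that $\iota$ preserves the right half-plane and that $P(ab/w)=(ab/w^2)P(w)$ there supply what the paper's one-line identification of the image path leaves implicit.

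One correction to your endpoint discussion: in the binary branches the term $KP(w)/(\pi b(1+w^2))$ makes $\Phi_{\mathrm B}$ blow up like $|w-\mathrm i|^{-1/2}$ at $w=\mathrm i$. That point is the endpoint $\mathrm ia$ for the plus branch and $\mathrm ib$ for the minus branch (the corner $\lambda=\mathrm i\pi/4$), so $\Phi$ is not bounded near both endpoints as you claim. The singularity is integrable, however, so dominated convergence with the majorant $C|w-\mathrm i|^{-1/2}$, or simply the continuity of $W$ up to the finite boundary, still gives the boundary identity.
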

\begin{proof}
The image $\iota_E\circ\gamma$ is the lift of the upper gap on the
same sheet, oriented from $(\mathrm ib,0)$ to $(\mathrm ia,0)$.
Thus the change of variables formula gives
\[
 \Re\int_\gamma\iota_E^*\alpha
 =\Re\int_{\iota_E\circ\gamma}\alpha
 =W(\mathrm ia)-W(\mathrm ib)=-W(\mathrm ib).
\]
\end{proof}

The above proposition reduces the computation to the integral of
$\alpha-\iota_E^*\alpha$.  Lemma~\ref{lem:elliptic-differential-inversion}
gives a simple rational expression for this differential in both
spherical and binary cases.

\begin{lemma}\label{lem:elliptic-differential-inversion}
On the chosen sheet, and on the gaps by boundary limits,
\[
\begin{aligned}
 \alpha_{\mathrm S}-\iota_E^*\alpha_{\mathrm S}
 &=\left(\frac{Y}{w^2}-\frac1{2w}\right)dw,\\
 \alpha_{\mathrm B\pm}-\iota_E^*\alpha_{\mathrm B\pm}
 &=\pm\frac{Kb\kappa'^2}{\pi}\frac{dw}{Y}
   +\frac12\frac{dw}{w}.
\end{aligned}
\]
In particular, these differences extend to meromorphic differentials
on $E_{a,b}$.
\end{lemma}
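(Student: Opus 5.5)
The plan is to compute $\iota_E^*\alpha$ directly from the formulas in Definition~\ref{def:elliptic-potential}, and to reduce everything to the Stieltjes-type integral $I$ together with its image under $t\mapsto ab/t$. First I record the pullbacks of the basic quantities. Write $w'=ab/w$. Then
\[
 dw'=-\frac{ab}{w^2}\,dw,\qquad Y'=\frac{ab}{w^2}\,Y,\qquad \frac{dw'}{Y'}=-\frac{dw}{Y}.
\]
In particular the holomorphic differential is anti-invariant, which will produce the $\pm\,dw/Y$ term in the binary case. I will check once that $\iota$ preserves the closed right half-plane and the sheet $Y=P(w)$. It maps the positive real axis to itself with $P>0$, so $P(w')=abP(w)/w^2$ there, and this identity propagates by continuity, including to the gap boundary values.

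Next I transform $I(w')$ by the substitution $t=ab/u$, which carries $(b,\infty)$ onto $(0,a)$. One has
\[
 \rho(ab/u)=\frac{u^2}{ab\sqrt{(a^2-u^2)(b^2-u^2)}},\qquad t^2+w'^2=\frac{a^2b^2(u^2+w^2)}{u^2w^2}.
\]
This gives $I(w')=\frac{w^2}{a^2b^2}J(w)$ with $J(w)=\int_0^a \frac{u^2\,du}{(u^2+w^2)r(u)}$, where $r(u)=\sqrt{(a^2-u^2)(b^2-u^2)}$. Multiplying the pieces gives
\[
 \iota_E^*\bigl(P I\,dw\bigr)=-\frac{Y}{w^2}J(w)\,dw .
\]
Using $\frac{u^2}{w^2(u^2+w^2)}=\frac1{w^2}-\frac1{u^2+w^2}$, I obtain
\[
 \alpha_{\mathrm S}-\iota_E^*\alpha_{\mathrm S}=\frac{Y}{\pi}\Bigl(\int_b^\infty\frac{dt}{(t^2+w^2)\sqrt{(t^2-a^2)(t^2-b^2)}}-\int_0^a\frac{du}{(u^2+w^2)r(u)}+\frac1{w^2}\int_0^a\frac{du}{r(u)}\Bigr)dw.
\]
The same substitution applied to \eqref{eq:elliptic-density-mass} shows that $\int_0^a du/r(u)=K/b$.

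The key step, and the one I expect to be the main obstacle, is the evaluation of the difference of the two Cauchy integrals. My approach is to view $1/\bigl((t^2+w^2)\sqrt{(t^2-a^2)(t^2-b^2)}\bigr)$ as a meromorphic function of $t$ on the real elliptic curve, cut along $[-a,a]$ and $|t|\ge b$. Integrating around a contour enclosing the cuts, the boundary values on the two cuts differ in sign by exactly the pattern $\int_b^\infty-\int_0^a$, up to the factor $i$ from the gap. Since the integrand decays at $\infty$, the contour integral equals the sum of the residues at $t=\pm\mathrm iw$, namely $\pm\pi/(w\,P(w))$ times a fixed constant.

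This should leave $\frac{Y}{\pi}\bigl(\frac{\pi}{wY}c+\frac{K}{bw^2}\bigr)$. At this point I must reconcile the result with the claimed $\frac{Y}{w^2}-\frac1{2w}$. The branch and sign bookkeeping of the residue ($c=-\tfrac12$) is delicate, as is the question whether the $K/b$ period cancels against a boundary contribution at $t=0$, which comes from the half-line integrals being taken from $0$. If the direct residue argument gets muddled, I will fall back on checking that both sides are meromorphic with matching principal parts at $w=0$ and $w=\infty$ and agree on the positive axis, and then conclude by Liouville's theorem on $E_{a,b}$.

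For the binary branches, $\Phi_{\mathrm B}=\frac{P}{\pi}\bigl(\frac{K}{b(1+w^2)}\bigr)-\Phi_{\mathrm S}$, so the $I$-part contributes minus the spherical answer. The remaining rational piece $\frac{K}{\pi b}\frac{Y\,dw}{1+w^2}$ pulls back to $-\frac{K}{\pi b}\frac{Y\,dw}{w^2+a^2b^2}$. In the $+$ branch ($a=1$) and the $-$ branch ($b=1$), one of $a,b$ equals $1$, so $1+w^2$ is a factor of $Y^2$. The difference $\frac{Y}{1+w^2}+\frac{Y}{w^2+a^2b^2}$ then collapses to a constant multiple of $dw/Y$ plus a multiple of $Y/w^2$. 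That multiple of $Y/w^2$ cancels the spherical $K/(bw^2)$ term, and the surviving coefficient is $\pm Kb\kappa'^2/\pi$ once $b^2-a^2=b^2\kappa'^2$ is substituted, with the sign set by which endpoint equals $1$. Meromorphy on $E_{a,b}$ is then immediate, since both final expressions are rational in $(w,Y)$ times $dw$.
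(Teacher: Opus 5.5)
Your argument is correct in substance, and it takes a different route from the paper at the one nontrivial step.

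\textbf{How the two routes differ.} The paper never leaves the ray $(b,\infty)$. For real $w>0$ it writes $I(w)+\frac{a^2b^2}{w^4}I(ab/w)-\frac{K}{bw^2}$ as the single integral $\int_b^\infty\frac{(a^2b^2-t^4)\rho(t)}{(t^2+w^2)(w^2t^2+a^2b^2)}\,dt$. It observes that the integrand is $\frac{1}{wP(w)}\frac{d}{dt}\arctan\bigl(tP(w)\rho(t)/w\bigr)$, reads off $-\frac{\pi}{2wP(w)}$ from the endpoint values $\frac{\pi}{2}$ and $0$, and then continues analytically.

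Your substitution $t=ab/u$ turns the same identity into $I(w)-\int_0^a\frac{du}{(u^2+w^2)r(u)}=-\frac{\pi}{2wP(w)}$. This is a relation between the two real half-periods of the third-kind integrand $\frac{dt}{(t^2+w^2)S(t)}$, and you prove it by residues at $t=\pm\mathrm iw$. The paper's version is a short real computation with no branches to track. Yours explains the constant as a residue (a reciprocity law) and works directly for every $w$ with $\Re w>0$, at the price of branch bookkeeping.

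\textbf{The branch bookkeeping.} It comes out as you guessed. Take $S$ on $\mathbb C\setminus\bigl([-a,a]\cup\{t\in\mathbb R:|t|\ge b\}\bigr)$ to be positive on the upper edge of $t>b$. Then:
\begin{itemize}
\item $S$ is odd;
\item $S$ equals $+\sqrt{(t^2-a^2)(t^2-b^2)}$ on the upper edges of both outer rays, and $-r(t)$ on the upper edge of $[-a,a]$;
\item $S(\mathrm iw)=-P(w)$.
\end{itemize}
The boundary integral is $4\bigl(I(w)-\int_0^a\frac{du}{(u^2+w^2)r(u)}\bigr)$. The two residues sum to $\frac{1}{\mathrm iwS(\mathrm iw)}$, so the contour integral is $-\frac{2\pi}{wP(w)}$ and $c=-\frac12$.

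There is no extra contribution at $t=0$: the central integrand is even, and the branch-point singularities are integrable. The $K/(bw^2)$ term does not cancel either. It becomes exactly the $Y/w^2$ of the statement once you use the spherical scale $b=K/\pi$. Keep the general-$b$ form $\bigl(\frac{KY}{\pi bw^2}-\frac1{2w}\bigr)dw$, because the binary branches need it.

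The Liouville fallback is unnecessary. As phrased it presupposes the result: agreement on the positive axis already is the identity, and meromorphy on $E_{a,b}$ is part of the conclusion.

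\textbf{A slip in the binary step.} The correct pullback is
\[
\iota_E^*\Bigl(\frac{Y\,dw}{1+w^2}\Bigr)=-\frac{a^2b^2\,Y\,dw}{w^2(w^2+a^2b^2)},
\]
not $-\frac{Y\,dw}{w^2+a^2b^2}$: the factors $\frac{ab}{w^2}\cdot\frac{ab}{w^2}$ from $Y$ and $dw$ were dropped. With your version, $Y\bigl(\frac1{1+w^2}+\frac1{w^2+a^2b^2}\bigr)=\frac{2w^2+a^2+b^2}{Y}$ whenever one of $a,b$ equals $1$. That expression is not a constant times $1/Y$ plus a multiple of $Y/w^2$. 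The computation would then produce $\frac{K}{\pi b}\,d(Y/w)+\frac{dw}{2w}$ rather than the stated formula.

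Use $\frac{a^2b^2}{w^2(w^2+a^2b^2)}=\frac1{w^2}-\frac1{w^2+a^2b^2}$ instead. The rational part then contributes $\frac{KY}{\pi b}\bigl(\frac1{w^2}+\frac1{1+w^2}-\frac1{w^2+a^2b^2}\bigr)dw$. The $\frac1{w^2}$ piece cancels the spherical $K/(bw^2)$ term. The remaining factor satisfies $\frac1{1+w^2}-\frac1{w^2+a^2b^2}=\pm\frac{b^2\kappa'^2}{Y^2}$ for $a=1$, resp.\ $b=1$. This is exactly the decomposition you described, so with the corrected pullback your proof closes.
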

\begin{proof}
The integral representation of $I$ gives the identity
\[
 I(w)+\frac{a^2b^2}{w^4}I\!\left(\frac{ab}{w}\right)
 =\frac{K}{bw^2}-\frac{\pi}{2wP(w)}.
\]
Indeed, for real $w>0$, combining the integrands and differentiating
the arctangent gives
\begin{align*}
 &I(w)+\frac{a^2b^2}{w^4}I\!\left(\frac{ab}{w}\right)
       -\frac{K}{bw^2}\\
 &\quad=\int_b^\infty
 \frac{(a^2b^2-t^4)\rho(t)}
      {(t^2+w^2)(w^2t^2+a^2b^2)}\,dt\\
 &\quad=\frac1{wP(w)}\int_b^\infty\frac{d}{dt}
       \arctan\!\left(\frac{tP(w)\rho(t)}w\right)dt
 =-\frac{\pi}{2wP(w)}.
\end{align*}
The identity extends to the right half-plane by analytic continuation.
Since $\iota_E^*dw=-ab\,dw/w^2$, pulling back
$\alpha_{\mathrm S}=YI(w)\,dw/\pi$ yields
\[
 \alpha_{\mathrm S}-\iota_E^*\alpha_{\mathrm S}
 =\left(\frac{KY}{\pi bw^2}-\frac1{2w}\right)dw.
\]
The spherical scale $b=K/\pi$ gives the stated formula.

For binary codes, $\alpha_{\mathrm B}=KY\,dw/(\pi b(1+w^2))
-\alpha_{\mathrm S}$, so the same calculation gives
\[
 \alpha_{\mathrm B}-\iota_E^*\alpha_{\mathrm B}
 =\frac{KY}{\pi b}
   \left(\frac1{1+w^2}-\frac1{w^2+a^2b^2}\right)dw
   +\frac12\frac{dw}{w}.
\]
Using $Y^2=(w^2+a^2)(w^2+b^2)$, this simplifies to the
stated formulas when $a=1$ for the plus branch and $b=1$ for the
minus branch, since $b^2-a^2=b^2\kappa'^2$.
\end{proof}

\begin{proposition}[Boundary data]\label{prop:elliptic-boundary}
The potential $W$ vanishes on the central segment.  It has zero normal
derivative on the free gaps, and its values on the rays are
\begin{equation}\label{eq:elliptic-ray-data}
 W(\mathrm iy)=
 \begin{cases}
 W(\mathrm ib)-\tfrac12\ln(y/b),&\text{spherical},\\
 W(\mathrm ib)+\tfrac12\ln(y/b),&\text{binary},
 \end{cases}\qquad y\ge b.
\end{equation}
\end{proposition}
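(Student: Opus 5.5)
The plan is to read everything off the boundary behavior of the analytic function $\Phi$ on the imaginary axis. Write $w=\mathrm iy$ approached from the right half-plane. Since $W=\Re\int_0^w\Phi$, the tangential derivative along the axis is $\partial_yW(\mathrm iy)=\Re(\mathrm i\Phi(\mathrm iy))=-\Im\Phi(\mathrm iy)$. The normal (horizontal) derivative is $\partial_xW=\Re\Phi(\mathrm iy)$. So the three claims reduce to three facts about boundary values on $\mathrm i\mathbb R$:
\begin{itemize}
\item[(i)] $\Phi$ is real on $\mathrm i(-a,a)$;
\item[(ii)] $\Phi$ is purely imaginary on the gaps;
\item[(iii)] $\Im\Phi(\mathrm iy)=\pm\frac1{2y}$ on $y>b$, with the sign depending on the branch.
\end{itemize}
Integrating (i) from $W(0)=0$ gives vanishing on the central segment. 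Integrating (iii) from $W(\mathrm ib)$ gives \eqref{eq:elliptic-ray-data}. The lower half follows from $W(\overline w)=W(w)$.

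First I determine the branch of $P$ on the axis. For $w=\mathrm iy$ we have $w^2+a^2=a^2-y^2$ and $w^2+b^2=b^2-y^2$.
\begin{itemize}
\item For $|y|<a$ both factors are positive, so $P$ is real and positive.
\item On the gaps $a<y<b$ one factor is negative. Continuing from the right half-plane makes $\sqrt{w^2+a^2}$ equal to $\pm\mathrm i\sqrt{y^2-a^2}$, so $P=\mathrm i\cdot(\text{real})$.
\item For $y>b$ both factors are negative, so $P=-\sqrt{(y^2-a^2)(y^2-b^2)}$, real. The sign comes from the two quarter-turns; I will check it by following $\arg$ along a small arc.
\end{itemize}
Next, the Stieltjes-type integral $I(w)=\int_b^\infty\rho(t)\,dt/(t^2-y^2)$ is real and finite for $|y|<b$, which covers the central segment and the gaps. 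Also $K/(b(1+w^2))=K/(b(1-y^2))$ is real. In the binary branches the pole at $y=1$ sits at an endpoint, namely $a=1$ or $b=1$. There $P$ vanishes to order $1/2$, so the singularity is integrable; I will check that $\Phi$ stays integrable at $\mathrm i$.

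With these, (i) holds because $\Phi$ is a product of real factors. (ii) holds because $\Phi$ is $P\cdot(\text{real})$ with $P$ imaginary, so the normal derivative $\Re\Phi$ vanishes on the gaps.

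The main step is (iii) on the rays $y>b$. There $t^2+w^2=t^2-y^2$ vanishes at $t=y$, so $I$ must be taken as a boundary value. By Sokhotski--Plemelj, $I(\mathrm iy+0)=\mathrm{p.v.}\!\int_b^\infty\rho(t)\,dt/(t^2-y^2)\pm\mathrm i\pi\rho(y)/(2y)$. The sign comes from $w=\epsilon+\mathrm iy$ with $\epsilon\downarrow0$, where $t^2+w^2\approx t^2-y^2+2\mathrm i\epsilon y$. Multiplying by the real $P=\mp1/\rho(y)$ makes the imaginary part of $P I/\pi$ equal to $\pm\frac1{2y}$. Every other term ($P\cdot\mathrm{p.v.}$ and $PK/(b(1-y^2))$) is real.

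The spherical and binary signs differ because $\Phi_{\mathrm B}$ carries $-I$. This matches $-\frac12\ln$ versus $+\frac12\ln$, which agrees with $\Theta(\mathrm iy)=-\tfrac12\ln y+\text{const}$ and $T(\mathrm iy)=\tfrac12\ln(y/2)$. As a consistency check on the signs, I will confirm they match Lemma~\ref{lem:elliptic-differential-inversion}'s residue $\frac12\,dw/w$.

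The obstacle I expect is purely bookkeeping of signs: the branch of $P$ beyond $b$ and the Plemelj orientation must combine to give exactly the stated signs. I will fix both by a local expansion near $\mathrm ib$ and near $y\to\infty$. At infinity, $I\sim K/(bw^2)$ and $P\sim w^2$ for the spherical branch, which gives $\Phi_{\mathrm S}\to K/(\pi b)=1$. That real constant contributes nothing to $\partial_yW$, which is consistent.

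Finally, continuity of $W$ up to the boundary gives continuity across the endpoints $\mathrm ia$ and $\mathrm ib$. This holds because $\Phi$ has only square-root singularities there, and additionally at $\mathrm i$ in the binary case. That justifies integrating the tangential derivative piecewise.
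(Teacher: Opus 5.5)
Your proposal is correct, but on the rays it takes a different route from the paper. On the central segment and the gaps it coincides with the paper's argument: $P$, $I$ and $K/(b(1-y^2))$ are real for $|y|<a$, and on the gaps $P$ is purely imaginary with a real cofactor.

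\textbf{Your route on the rays.} You compute the boundary value of $I$ directly. Since $t^2+w^2=t^2-y^2+2\mathrm i\epsilon y+O(\epsilon^2)$, Sokhotski--Plemelj gives
\[
I(\mathrm iy+0)=\operatorname{pv}\int_b^\infty\frac{\rho(t)\,dt}{t^2-y^2}-\frac{\mathrm i\pi\rho(y)}{2y}.
\]
Combined with $P(\mathrm iy+0)=-1/\rho(y)$, this yields $\Im\Phi_{\mathrm S}(\mathrm iy+0)=1/(2y)$ and $\Im\Phi_{\mathrm B}(\mathrm iy+0)=-1/(2y)$, exactly as needed. Both signs you left open are minus, so your $\pm$ and $\mp$ should not be read as linked.

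\textbf{The paper's route.} The paper instead uses the inversion identity of Lemma~\ref{lem:elliptic-differential-inversion}. The map $w\mapsto ab/w$ sends the ray into the central segment, where $\Phi$ is already known to be real. Also, $Y\,dw/w^2$ and $dw/Y$ have real coefficients on the ray. So only the terms proportional to $dw/w$ contribute to $\Im\Phi$.

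\textbf{What each buys.} Your argument is local and self-contained. It needs no functional equation, and it shows why the constant $1/(2y)$ appears: $\rho(y)P(\mathrm iy+0)=-1$ on the cut. The price is a routine principal-value justification. This is harmless, since $\rho$ is smooth on $(b,\infty)$ and the boundary limit is locally uniform there. The paper's argument avoids principal values altogether and recycles an identity it needs anyway for evaluating $W(\mathrm ib)$. It also ties the sign difference between the spherical and binary rays to the sign of the logarithmic term in that identity, rather than to the $-I$ in $\Phi_{\mathrm B}$.

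One small precision on your continuity remark. At $\mathrm ib$ it is the product $PI$ that stays bounded: the inverse square-root blow-up of $I$ is cancelled by the zero of $P$. The only unbounded, but integrable, singularity is the $(w^2+1)^{-1/2}$ behaviour at $w=\mathrm i$ in the binary branches, which you already flagged.
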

\begin{proof}
On the central segment $\Phi$ is real, so the tangential derivative
of $W$ is zero; normalization gives $W=0$.  On the gaps $\Phi$ is
purely imaginary, giving the zero normal derivative needed for harmonic
reflection.

For $w=\mathrm iy$ with $y>b$, inversion sends $w$ to
$-\mathrm iab/y$ on the central segment, so $\Phi(ab/w)$ is real.
Moreover, $Y=P(\mathrm iy+0)$ is real, and
\[
 \iota_E^*\alpha=-\frac{ab}{w^2}\Phi(ab/w)\,dw.
\]
The pullback has a real coefficient on the ray, as do $Y\,dw/w^2$
and $dw/Y$.  Taking imaginary parts in
Lemma~\ref{lem:elliptic-differential-inversion} therefore leaves only
the terms proportional to $dw/w$, giving
\[
 \Im\Phi_{\mathrm S}(\mathrm iy+0)=\frac1{2y},\qquad
 \Im\Phi_{\mathrm B}(\mathrm iy+0)=-\frac1{2y}.
\]
Since $dW(\mathrm iy)/dy=\Re(\mathrm i\Phi)=-\Im\Phi$,
integration from $b$ gives \eqref{eq:elliptic-ray-data}.
\end{proof}

\begin{lemma}[Growth at infinity]\label{lem:elliptic-growth}
Uniformly in the closed right half-plane, as $|w|\to\infty$,
\begin{equation}\label{eq:elliptic-derivative-asymptotics}
 \Phi_{\mathrm S}(w)=1-\frac1{2w}+O(w^{-2}),\qquad
 \Phi_{\mathrm B}(w)=\frac1{2w}+O(w^{-2}),
\end{equation}
and hence
\begin{equation}\label{eq:elliptic-growth}
 W_{\mathrm S}(w)=\Re w-\tfrac12\ln|w|+O(1),\qquad
 W_{\mathrm B}(w)=\tfrac12\ln|w|+O(1).
\end{equation}
\end{lemma}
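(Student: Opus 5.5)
The plan is to expand $P(w)$ and $I(w)$ separately at infinity and then multiply.

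\emph{Expansion of $P$.} In the closed right half-plane with $|w|$ large, the branch fixed on the positive real axis gives
\[
 P(w)=w^2\sqrt{1+a^2/w^2}\sqrt{1+b^2/w^2}=w^2+\tfrac12(a^2+b^2)+O(w^{-2}),
\]
uniformly for $|w|\geq 2b$. The boundary values on the imaginary rays $|y|\ge b$ come as limits from the right and agree with this expansion.

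\emph{Expansion of $I$.} Write
\[
 \frac{1}{t^2+w^2}=\frac1{w^2}-\frac{t^2}{w^2(t^2+w^2)}.
\]
This gives $I(w)=K/(bw^2)-w^{-2}J(w)$, where $J(w)=\int_b^\infty t^2\rho(t)\,dt/(t^2+w^2)$. The weight $t^2\rho(t)\sim 1$ is not integrable, so $J$ is not $O(w^{-2})$. Instead, subtract the tail: use $t^2\rho(t)=1+O(t^{-2})$ and compute
\[
 \int_b^\infty\frac{dt}{t^2+w^2}=\frac1w\Bigl(\frac\pi2-\arctan\frac bw\Bigr)=\frac{\pi}{2w}-\frac{b}{w^2}+O(w^{-4}).
\]
This holds with the principal branch, uniformly in $\Re w\geq0$, including the boundary limits. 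The remainder is bounded by $\int_b^\infty O(t^{-2})\,dt/|t^2+w^2|$.

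The main obstacle is uniformity near the imaginary axis, where $t^2+w^2$ vanishes at $t=|y|$. I would split the integral at $t=|w|/2$ and $t=2|w|$. On the middle range, the factor $t^{-2}\lesssim|w|^{-2}$ combines with the boundary-value (Plemelj) estimate, in which the principal value and the $\pi\mathrm i$ residue term are each $O(1/|w|)$. Together these give $O(|w|^{-3})$, hence $J(w)=\pi/(2w)+O(w^{-2})$.

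\emph{Assembling $\Phi_{\mathrm S}$.} Substituting,
\[
 I(w)=\frac{K}{bw^2}-\frac{\pi}{2w^3}+O(w^{-4}).
\]
Multiplying by $P/\pi$,
\[
 \Phi_{\mathrm S}=\frac{K}{\pi b}-\frac1{2w}+O(w^{-2})=1-\frac1{2w}+O(w^{-2}),
\]
since $b=K/\pi$ on the spherical branch.

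\emph{Assembling $\Phi_{\mathrm B}$.} Here
\[
 \frac{K}{b(1+w^2)}-I(w)=\frac{K}{bw^2}+O(w^{-4})-\frac{K}{bw^2}+\frac{\pi}{2w^3}+O(w^{-4}).
\]
Multiplying by $P/\pi$ gives $\Phi_{\mathrm B}=1/(2w)+O(w^{-2})$.

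\emph{Growth of $W$.} Integrate $\Phi$ along a path from $0$ to $w_0$, $|w_0|=r_0$ fixed, and then radially out to $w$. On the bounded part, $\Phi$ is integrable: $P$ is continuous, and $I$ has at worst integrable singularities at $\pm\mathrm ib$ on the boundary. On the far part, integrate the expansion. The constant term gives $\Re w$ up to $O(1)$, the $\mp1/(2w)$ term gives $\mp\tfrac12\ln|w|+O(1)$ (the argument contributes only a bounded real part), and the $O(w^{-2})$ term contributes $O(1)$. Path independence in the right half-plane, already noted in the paper, transfers the result to the closed half-plane by continuity.
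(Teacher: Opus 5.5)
Your argument is correct, but it reaches the derivative asymptotics by a different route from the paper.

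You expand the Stieltjes-type integral $I$ directly. This forces you to confront the rays $\mathrm iy$, $|y|>b$, where $t^2+w^2$ vanishes and $I$ is only a principal value plus a residue. The paper avoids this entirely. It uses the inversion functional equation from Lemma~\ref{lem:elliptic-differential-inversion}: $\Phi(w)+\frac{ab}{w^2}\Phi(ab/w)$ equals $P(w)/w^2-1/(2w)$ on the sphere and $1/(2w)\pm Kb\kappa'^2/(\pi P(w))$ in the binary cases. It then notes that $w\mapsto ab/w$ carries a neighborhood of infinity in the closed half-plane, rays included, into the disk $|z|<a$, where $\Phi$ is holomorphic. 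The correction term is therefore $O(w^{-2})$, and only the elementary expansions of $P(w)/w^2-1$ and $1/P(w)$ remain.

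That gives uniformity up to the boundary for free; it even shows that $\Phi$ continues analytically to $|w|>b$. The price is that it depends on the special inversion symmetry of this particular $\rho$. Your route is self-contained and would work for any density with $t^2\rho(t)=1+O(t^{-2})$. However, the Plemelj step carries all the weight, so a few points should be stated precisely.

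\begin{itemize}
\item The function $g(t)=t^2\rho(t)-1$ is not $O(t^{-2})$ near $t=b$, where it blows up like $(t-b)^{-1/2}$. That range still contributes only $O(|w|^{-2})$, but the bound needs to say so.
\item On $|w|/2\le t\le 2|w|$ you need $g'(t)=O(t^{-3})$ and a bound uniform in $\Re w\ge0$, not only at the boundary values. Write $t^2+w^2=(t-y+\mathrm ix)(t+y-\mathrm ix)$ for $w=x+\mathrm iy$, and subtract $h(y)$ from $h(t)=g(t)/(t+y-\mathrm ix)$. This gives $O(|w|^{-3})$ uniformly in $x\ge0$.
\item In the bounded part of the growth argument, $\Phi_{\mathrm B}$ also contains the term $K/(b(1+w^2))$. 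Its pole at $w=\mathrm i$ sits at a branch point of $P$ (since $a=1$ or $b=1$), so $\Phi_{\mathrm B}=O(|w-\mathrm i|^{-1/2})$ there, not bounded. This is still uniformly integrable along your paths. So the bound on the fixed semicircle, which the paper takes from the continuity of $W$, goes through.
\end{itemize}
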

\begin{proof}
Lemma~\ref{lem:elliptic-differential-inversion} gives the functional equation
\[
 \Phi_{\mathrm S}(w)+\frac{ab}{w^2}\Phi_{\mathrm S}\Bigl(\frac{ab}w\Bigr)
 =\frac{P(w)}{w^2}-\frac1{2w},\qquad
 \Phi_{\mathrm B\pm}(w)+\frac{ab}{w^2}\Phi_{\mathrm B\pm}\Bigl(\frac{ab}w\Bigr)
 =\frac1{2w}\pm\frac{Kb\kappa'^2}{\pi P(w)}.
\]
The disk $|z|<a$ contains no branch point of $P$, no singularity of
$I$, and no pole of $(1+z^2)^{-1}$, so $\Phi$ is holomorphic there.
By the functional equation, the terms $P(w)/w^2-1$ and $1/P(w)$ are holomorphic on
$b<|w|\le\infty$ and vanish at infinity to second order.
Considering boundary values on the imaginary axis gives \eqref{eq:elliptic-derivative-asymptotics}.

Fix $R>b$, and write $w=re^{\mathrm i\theta}$, where $r\ge R$ and
$|\theta|\le\pi/2$.  Integrate the derivative asymptotics along the
radial segment from $Re^{\mathrm i\theta}$ to $w$.  The leading terms
give $\Re w-\frac12\ln|w|$ in the spherical case and
$\frac12\ln|w|$ in the binary case.  The error is uniformly bounded because
\[
 \int_R^r O(t^{-2})\,dt=O(1).
\]
Since $W$ is bounded on the fixed semicircle $|w|=R$, the growth
estimates in~\eqref{eq:elliptic-growth} are uniform in the closed
right half-plane.  Finally,
$\ln|\sinh2\lambda|=2\Re\lambda-\ln2+O(e^{-4\Re\lambda})$, so
$\mathcal W_{\mathrm B}(\lambda)=\Re\lambda+O(1)$ on the binary
half-strip.
\end{proof}

\begin{theorem}[Elliptic solution: rate and threshold]\label{thm:elliptic-obstacle}
For each $0<\kappa<1$, the construction above equals the Perron envelope
$V_R$ of \eqref{eq:perron-problem}, at the following rates.  In the binary
case put
\[
 R_\pm(\kappa)=\frac12\ln2\pm B(\kappa).
\]
Then
\begin{equation}\label{eq:binary-elliptic-threshold}
 c(R_+(\kappa))=\frac{2E}{\pi},\qquad
 c(R_-(\kappa))=
       \frac{2\bigl(E-\kappa'^2K\bigr)}{\pi\kappa}.
\end{equation}
At the rate $R=\frac12\ln2$, one has $c(R)=2/\pi$.
Equivalently, $R_\pm=\ln2\,\mathcal J(\delta_\pm)$ and
$c(R_\pm)=1-2\delta_\pm$.  In the spherical case put
\[
 R_{\mathrm S}(\kappa)=
 \frac12+\frac12\ln\frac K\pi+\frac14\ln\kappa
 +\frac{K\bigl((1+\kappa^2)K'-2E'\bigr)}{2\pi}.
\]
Then
\begin{equation}\label{eq:spherical-elliptic-threshold}
 c(R_{\mathrm S}(\kappa))=\frac{K-E}{\kappa K}.
\end{equation}
Equivalently, $R_{\mathrm S}(\kappa)=F_{\mathrm S}(s(\kappa))$ and
$c(R_{\mathrm S}(\kappa))=s(\kappa)$.
\end{theorem}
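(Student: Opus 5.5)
The plan is to verify that $\mathcal W$ (spherical or binary, with $R$ given by the stated formula) is a subharmonic minorant of $\psi_R$ which touches $\psi_R$ on a set large enough to force maximality. Then read off $m_R(\sigma)$ from the growth of $\mathcal W$ in the real direction.

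\textbf{Step 1: rate from the ray data.} By Proposition~\ref{prop:elliptic-boundary}, on the rays $|y|\ge b$ the boundary values of $W$ have the same logarithmic slope as the obstacle $R+\Theta$ (respectively $R+T$). For spheres this slope is $-\tfrac12\ln y$; for binary codes it is $+\tfrac12\ln y$. So contact there is equivalent to one scalar identity, $W(\mathrm ib)=R+\Theta(\mathrm ib)$ or its binary analogue. I would compute $W(\mathrm ib)$ using Proposition~\ref{prop:elliptic-gap-inversion}: $2W(\mathrm ib)=\Re\int_\gamma(\alpha-\iota_E^*\alpha)$. Lemma~\ref{lem:elliptic-differential-inversion} makes this integrand explicit. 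The $dw/w$ term gives $\tfrac12\ln(b/a)=-\tfrac12\ln\kappa$. The $dw/Y$ term over a gap is a complete elliptic integral, equal to $K'/b$ up to sign. The $Y\,dw/w^2$ term reduces by Legendre-type reductions to combinations of $K',E'$. Solving for $R$ should give $R_\pm$ and $R_{\mathrm S}$; the $KK'$ term in $B(\kappa)$ comes from $\tfrac{Kb\kappa'^2}{\pi}\cdot\tfrac{K'}{b}$. On the central segment $W=0$, so I must check that $0\le\psi_R$ there and that this is consistent with the corner $w=\mathrm i$ in the binary branches.

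\textbf{Step 2: $\mathcal W\le\psi_R$ and subharmonicity.} $\mathcal W$ is harmonic off the contact set, and the even reflection across the gaps is harmonic because the normal derivative vanishes there. On the contact set the reflected function has a kink, and its Laplacian there is $2\partial_\nu W$. I must show this is nonpositive, i.e.\ $\Re\Phi\ge0$ on contacts; note the sign convention, since a subharmonic minorant needs a concave kink. This forces the measure to be positive, and positivity of $\rho$ and $I$ should give it for $\Phi_{\mathrm S}$.

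This is also where the exact choice of free endpoints enters. Near $\mathrm ia$ and $\mathrm ib$ the quantity $\Phi$ behaves like $\sqrt{w-\mathrm ib}$, which is the smooth-fit condition. The inequality $\mathcal W\le\psi_R$ on $\mathbb C$ then follows in three steps: harmonic majorization in each half-plane, using the growth in Lemma~\ref{lem:elliptic-growth} against $\mathsf h$ and $\Theta$ at infinity; the maximum principle applied to $\psi_R-\mathcal W$, which is superharmonic; and checking $\mathcal W\le\psi_R$ on the gaps via monotonicity of $W$ along the gap. That monotonicity follows from the sign of $\Im\Phi$ on the gaps.

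\textbf{Step 3: maximality and slope.} Any subharmonic $v\le\psi_R$ satisfies $v\le\psi_R=\mathcal W$ on the contact set. Since $v-\mathcal W$ is subharmonic on the complement and $\le O(1)$ at infinity by the growth estimates, Phragmén–Lindelöf in each slit region gives $v\le\mathcal W$. Hence $V_R=\mathcal W$.

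For the slope, $m_R(\sigma)=\sup_\eta \mathcal W(\sigma+\mathrm i\eta)$. For the binary case I expect the supremum over $\eta$ to be on the real axis by the $\mathrm i\pi/2$-periodicity and symmetry. Then $c(R)=\inf_\sigma \mathcal W(\sigma)/\sigma$, which is approached as $\sigma\to\infty$ because $\mathcal W$ is convex-like along the real axis. So $c(R)=\lim_{\sigma\to\infty}\mathcal W(\sigma)/\sigma$ in the binary case, which equals the coefficient of $\Re\lambda$, namely $\tfrac12\cdot2\cdot$ the residue-type constant from $\Phi_{\mathrm B}$ at infinity. More precisely, $W(\mathrm ib)$ is offset against $\Theta$, so one gets $c=1-2\delta$ with $\delta=\tfrac12-\tfrac E\pi$ after using Legendre's relation $EK'+E'K-KK'=\pi/2$. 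For spheres I would instead evaluate $c$ as $\inf_\sigma \mathcal W(\sigma)/\sigma$ at a finite optimizer, computing $\partial_\sigma$ via $\Phi$ on the real axis.

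The main obstacle is this last evaluation together with the sign checks in Step 2: showing the supremum in $\eta$ is at the real axis and simplifying the elliptic expressions to $2E/\pi$, $2(E-\kappa'^2K)/(\pi\kappa)$ and $(K-E)/(\kappa K)$. I would attempt these using the functional equation of Lemma~\ref{lem:elliptic-growth} and Legendre's relation.
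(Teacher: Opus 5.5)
Your Steps 1--2 and the maximality argument follow essentially the paper's route. You use gap inversion with Lemma~\ref{lem:elliptic-differential-inversion} to compute $W(\mathrm ib)$, the sign of $\Re\Phi$ on $C_{a,b}$ for subharmonicity, and a maximum principle off the contact set. The threshold computation in Step~3, however, is wrong.

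If $\sigma\mapsto\mathcal W(\sigma)$ is convex on $[0,\infty)$ with $\mathcal W(0)=0$, the secant slopes $\mathcal W(\sigma)/\sigma$ are \emph{increasing}. So $c(R)=\inf_{\sigma>0}\mathcal W(\sigma)/\sigma$ is approached as $\sigma\downarrow0$ and equals $\mathcal W'(0+)$, not the limit at infinity. Lemma~\ref{lem:elliptic-growth} gives $\mathcal W(\sigma)=\sigma+O(1)$ in the binary case and $\mathcal W(\sigma)=\sigma-\tfrac12\ln\sigma+O(1)$ on the sphere. Hence $\lim_{\sigma\to\infty}\mathcal W(\sigma)/\sigma=1$ for every $\kappa$, and your recipe yields the trivial threshold $s=1$. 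No ``offsetting of $W(\mathrm ib)$ against $\Theta$'' or appeal to Legendre's relation turns this into $2E/\pi$, and there is no interior optimizer on the sphere either.

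The missing step is $c(R)=\mathcal W'(0+)$. This equals $\Phi_{\mathrm S}(0)=abI(0)/\pi$ on the sphere and $2\Phi_{\mathrm B}(0)=2ab(K/b-I(0))/\pi$ in the binary case, where the factor $2$ is $dw/d\lambda$ at $\lambda=0$. The substitution $t=b/u$ gives $I(0)=\int_b^\infty t^{-2}\rho(t)\,dt=(K-E)/(\kappa^2b^3)$. The three scales then give $2E/\pi$, $2(E-\kappa'^2K)/(\pi\kappa)$ and $(K-E)/(\kappa K)$ directly.

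Two facts you take for granted in Step~3 also need proof, and they are linked. Evenness and $\mathrm i\pi/2$-periodicity do not by themselves put $\sup_\eta\mathcal W(\sigma+\mathrm i\eta)$ on the real axis. The paper shows $\partial_\eta\mathcal W\le0$ in the upper quadrant (sphere) or upper quarter-strip (binary) by the maximum principle for the harmonic function $\partial_\eta\mathcal W$. This function:
\begin{itemize}
\item vanishes on the real axis;
\item is $\le0$ on the imaginary axis by \eqref{eq:elliptic-gap-comparisons} and the contact/ray data;
\item equals $-2\sinh(2\sigma)\Re\Phi_{\mathrm B}(\mathrm i\cosh 2\sigma+0)\le0$ on the edge $\Im\lambda=\pi/4$;
\item tends to $0$ at infinity;
\item is bounded via the $1$-Lipschitz regularity of the obstacle solution.
\end{itemize}
Convexity then follows: since $\eta=0$ is a maximum, $\partial_\sigma^2\mathcal W=-\partial_\eta^2\mathcal W\ge0$ on the real axis.

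Finally, your kink bookkeeping in Step~2 has the sign backwards. Subharmonicity needs a convex, $|x|$-type kink, with Laplacian $2\Re\Phi\ge0$ times arclength on the contact set. The condition $\Re\Phi\ge0$ you end up checking is nonetheless the correct one.
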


The proof of Theorem~\ref{thm:elliptic-obstacle} occupies the next two
subsections.  The half-rate statement follows from the convergence
of the $R_+$ and $R_-$ branches.  The next subsection will prove that
$\mathcal W=V_R$, while the final subsection will compute the explicit
rates and thresholds required above.

\subsection{Edwards' duality and identification of the solution}

The convex duality statement for this obstacle problem is Edwards' duality between
subharmonic minorants and Jensen measures.  We identify the candidate by
the extended maximum principle.

\begin{theorem}[Extended maximum principle {\cite[Theorem~3.6.9]{Ransford1995}}]
\label{thm:extended-maximum-principle}
Let $D\subseteq\mathbb C$ be a domain with nonpolar boundary, and let
$u$ be subharmonic on $D$ and bounded above.  If
\[
 \limsup_{\substack{z\to\zeta\\z\in D}}u(z)\le0
\]
for every $\zeta\in\partial D$ outside a polar set, then $u\le0$ on $D$.
Here the boundary is taken in the Riemann sphere; in particular, infinity
may be included in the exceptional set.
\end{theorem}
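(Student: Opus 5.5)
The plan is to reduce to the ordinary maximum principle by perturbing $u$ with a small multiple of a subharmonic function that equals $-\infty$ on the exceptional polar set. Let $E\subset\partial D$ be the polar set where the boundary hypothesis may fail; enlarging $E$ by the single (polar) point $\infty$ costs nothing. Since every polar set is contained in a Borel ($G_\delta$) polar set, we may assume $E$ is Borel. The key input is the Evans--Deny type theorem: for a Borel polar set there is a subharmonic function $v$ on $\mathbb C$, $v\not\equiv-\infty$, with $v=-\infty$ on $E\cap\mathbb C$.

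First treat the case where $D$ is bounded. Then $v$ is bounded above on $\overline D$, and after subtracting a constant we have $v\le0$ on $D$. For $\varepsilon>0$ put $u_\varepsilon=u+\varepsilon v$, which is subharmonic on $D$. We check its boundary behaviour.
\begin{itemize}
\item At $\zeta\in\partial D\setminus E$: $\limsup u_\varepsilon\le\limsup u\le0$, since $v\le0$.
\item At $\zeta\in E$: $u$ is bounded above by some $M$, while upper semicontinuity of $v$ and $v(\zeta)=-\infty$ give $v(z)\to-\infty$ as $z\to\zeta$. Hence $u_\varepsilon\to-\infty$ there.
\end{itemize}
The classical maximum principle, which requires the boundary inequality at every boundary point, then yields $u_\varepsilon\le0$ on $D$.

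Let $\varepsilon\downarrow0$. This gives $u\le0$ on $D\setminus\{v=-\infty\}$, and the set $\{v=-\infty\}$ is polar, hence of area zero. For any $z\in D$, the submean inequality over small disks, whose area averages ignore this null set, gives $u(z)\le0$.

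For unbounded $D$, the difficulty is that $v$ typically grows like $c\ln|z|$ and is no longer bounded above on $D$. Here the nonpolarity of $\partial D$ enters. I would first try a M\"obius change of variables that preserves subharmonicity and maps polar sets to polar sets. Pick $\zeta_0\in\partial D\setminus E$, which is possible because $\partial D$ is not polar, and move $\zeta_0$ to $\infty$ by $z\mapsto1/(z-\zeta_0)$. The new exceptional set then stays away from $\infty$, and the hypothesis still holds at the new point at infinity.

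If the image domain is still unbounded, I would instead replace $v$ by a potential of a measure on a compact subset of $E$. Then $v$ is harmonic near $\infty$ with $v(z)=c\ln|z|+O(1)$, and $v$ can be compensated by $-c\,g(z,\infty)$, where $g$ is the Green function with pole at infinity of the complement of a compact nonpolar piece $F\subset\partial D\setminus E$. The function $g$ exists and is finite because $F$ is nonpolar, and $g$ vanishes quasi-everywhere on $F$. The combined function is subharmonic on $D$, bounded above, and still $-\infty$ on the chosen compact part of $E$. Since polar sets are inner-capacitable, exhausting $E$ by compact subsets and using a countable sum $\sum_k 2^{-k}v_k$ handles all of $E$. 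The bounded case then goes through verbatim.

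I expect this last step, controlling the perturbation near $\infty$ for unbounded $D$ and reconciling the polar exceptional set of $g$ on $F$ with the boundary condition, to be the main obstacle.
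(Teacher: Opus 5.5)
The paper does not prove this statement; it quotes Ransford. Your bounded case is fine, granting an Evans-type function. The unbounded case, however, does not close as written. It is the only case where nonpolarity of $\partial D$ matters, and it is the case the paper needs: in Lemma~\ref{lem:elliptic-maximality} the complementary components are unbounded and $\infty$ is the exceptional point.

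\textbf{The gap at $\infty$.} You put $\infty$ into $E$. Running the bounded-case argument therefore requires the perturbation to tend to $-\infty$ at $\infty$. The exact compensation $v-c\,g(\cdot,\infty)$ is only $O(1)$ there, so $\limsup_{z\to\infty}u_\varepsilon(z)$ is not controlled and the classical maximum principle cannot be invoked. The claim that ``the bounded case then goes through verbatim'' fails precisely at this boundary point.

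The missing step is to over-compensate. Let $\mu_F$ be the equilibrium measure of a compact nonpolar $F\subset\partial D\cap\mathbb C$, and put $g=p_{\mu_F}-I(\mu_F)$. Then:
\begin{itemize}
\item $g\ge0$ on all of $\mathbb C$ by Frostman's theorem;
\item $g$ is harmonic on $D$;
\item $g(z)=\ln|z|+O(1)$ at $\infty$.
\end{itemize}
So $-g$ is a nonpositive harmonic function on $D$ tending to $-\infty$ at $\infty$. Adding $-\varepsilon g$ as well (equivalently, using $v-(c+1)g$) handles $\infty$; this is exactly where the nonpolar boundary enters. It also shows that your worry about the polar set where $g\ne0$ on $F$ is unfounded: only the bound $-g\le0$ is ever used, and $F$ need not avoid $E$.

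\textbf{The M\"obius step.} This step accomplishes nothing. Since $\zeta_0\in\partial D$, the image of $D$ under $z\mapsto1/(z-\zeta_0)$ is always unbounded. Moreover $E$ may accumulate at $\zeta_0$ (take $E$ a countable dense subset of $\partial D$), so the new exceptional set need not stay away from $\infty$.

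\textbf{The exhaustion of $E$.} Inner capacitability does not let you write $E$ as a countable union of compact sets, and a $G_\delta$ polar hull need not be $\sigma$-compact. Indeed, under Ransford's energy definition of polarity, a polar set, such as a Bernstein set, need not lie in any Borel polar set.

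What works is to take $E$ to be the actual failure set $\{\zeta\in\partial D:\limsup_{z\to\zeta}u(z)>0\}$. This set is $F_\sigma$ in the Riemann sphere, because $\zeta\mapsto\limsup_{z\to\zeta}u(z)$ is upper semicontinuous on $\partial D$. Write $E\cap\mathbb C$ as a union of compact pieces $E_k$. Evans' theorem then gives probability measures $\nu_k$ on $E_k$ with $p_{\nu_k}=-\infty$ on $E_k$. Fixed weights $2^{-k}$ are not enough; choose weights $\epsilon_k$ so that
\begin{itemize}
\item $\sum_k\epsilon_k\le1$;
\item $\sum_k\epsilon_k\ln(1+\max_{E_k}|\zeta|)<\infty$;
\item the series converges at one point of $D$ lying off the polar sets $\{p_{\nu_k}=-\infty\}$.
\end{itemize}
Then $\sum_k\epsilon_kp_{\nu_k}-2g$ is subharmonic on $D$, bounded above there, and tends to $-\infty$ at every point of $E$, including $\infty$. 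With this perturbation your bounded-case argument goes through unchanged, and you no longer need a Deny-type theorem for arbitrary Borel polar sets.
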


For $\lambda_0\in\mathbb C$, let $J_{\lambda_0}$ be the set of compactly
supported probability measures $\mu$ satisfying
\[
 u(\lambda_0)\le\int u\,d\mu
 \qquad\text{for every subharmonic }u\text{ on }\mathbb C.
\]
These are the Jensen measures at $\lambda_0$.  Edwards' duality
\cite{Edwards1966}, in its noncompact subharmonic form
\cite[\S1]{GogusPerkinsPoletsky2013}, gives
\begin{equation}\label{eq:edwards-obstacle-duality}
 V_R(\lambda_0)
 =\sup_{v\in\mathcal V(R)}v(\lambda_0)
 =\inf_{\mu\in J_{\lambda_0}}\int\psi_R\,d\mu.
\end{equation}
The obstacle is continuous in both settings, so this form of the theorem
applies.  The inequality from left to right follows directly from
$v(\lambda_0)\le\int v\,d\mu\le\int\psi_R\,d\mu$.
We first establish that $\mathcal W$ is an admissible minorant.
Throughout the following two lemmas, let $R$ satisfy
\begin{equation}\label{eq:elliptic-endpoint-matching}
 W(\mathrm ib)=
 \begin{cases}
 R+\Theta(\mathrm ib),&\text{spherical},\\
 R+T(\mathrm ib),&\text{binary}.
 \end{cases}
\end{equation}
Since $b\ge1/2$ on the sphere and $b\ge1$ in the binary case,
Proposition~\ref{prop:elliptic-boundary} then gives equality with the
corresponding obstacle on the rays.

\begin{lemma}[Admissibility of the elliptic potential]\label{lem:elliptic-obstacle-check}
The functions $\mathcal W$ are globally subharmonic and satisfy
$\mathcal W\le\min\{\mathsf h,R+\Theta\}$.  Equality with the
obstacle holds on the contact set.
\end{lemma}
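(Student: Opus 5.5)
The proof has three parts. First, $\mathcal W$ is harmonic off the contact set and subharmonic across it. Second, $\mathcal W$ lies below the obstacle on and near the imaginary axis. Third, the inequality extends to the open half-planes by the extended maximum principle.

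\emph{Harmonicity and subharmonicity across the axis.} $W$ is the real part of a holomorphic primitive on the right half-plane, hence harmonic there; for the binary case its pullback under $w=\sinh(2\lambda)$ is harmonic on the half-strip. By Proposition~\ref{prop:elliptic-boundary}, $\Phi$ is purely imaginary on the gaps, so the normal derivative vanishes there. The even reflection across the imaginary axis (and, in the binary case, across the horizontal sides $\Im\lambda=\pm\pi/4$, where $w$ lies on $\mathrm i[1,\infty)$ or its image) is therefore harmonic across the gaps. On the contact set the reflected function is continuous, and the jump of the normal derivative equals twice the outward normal derivative. It is a nonnegative measure exactly when $\partial_\sigma W\ge 0$ at $\sigma=0^+$, i.e.\ when $\Re\Phi(\mathrm iy+0)\ge0$ on the central segment and on the rays. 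So I would check the sign of $\Re\Phi$ on $C_{a,b}$ directly. On the central segment $\Phi=P\,I/\pi$, with $P>0$ for $|y|<a$ and $I(\mathrm iy)=\int\rho/(t^2-y^2)>0$ for the spherical case. For the binary case one needs $K/(b(1-y^2))\ge I(\mathrm iy)$, which follows from $t^2-y^2\ge b^2(1-y^2)\cdots$; more cleanly, compare with $\int\rho=K/b$. On the rays, use the inversion relation of Lemma~\ref{lem:elliptic-differential-inversion}: $\Re\Phi(\mathrm iy)$ is expressed through the real values of $\Phi$ on the central segment times $-ab/w^2=ab/y^2>0$, plus the real parts of the rational terms, which have the correct sign because $P(\mathrm iy+0)$ is real of sign fixed by the branch. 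Singular points (the endpoints $\pm\mathrm ia,\pm\mathrm ib$ and the binary corner $w=\mathrm i$) are finite sets where $\mathcal W$ is continuous, hence removable for subharmonicity.

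\emph{The obstacle on the axis.} On the central segment $\mathcal W=0=\mathsf h$, and $0\le R+\Theta$ there, since $\Theta(\mathrm iy)$ decreases from $0$ and the matching \eqref{eq:elliptic-endpoint-matching} together with monotonicity keeps $R+\Theta\ge0$ up to $a$. On the rays, Proposition~\ref{prop:elliptic-boundary} together with \eqref{eq:elliptic-endpoint-matching} gives $\mathcal W=R+\Theta$ exactly, because $\Theta$ has the same $\mp\tfrac12\ln y$ profile for $y\ge b$. This gives equality on the contact set. On the gaps I would show $\mathcal W\le\min\{0,R+\Theta\}$. Since $\partial_y W=-\Im\Phi$ and $\Phi$ is purely imaginary there with a sign fixed by $P$, $W$ is monotone on each gap. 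Its values run from $0$ to $W(\mathrm ib)$, and $R+\Theta$ is concave in $\ln y$ there, which gives the comparison.

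\emph{Off the axis, and the main obstacle.} In each open half-plane $u=\mathcal W-\psi_R$ is the difference of a harmonic function and a minimum of harmonic functions, hence subharmonic. It is $\le0$ on the axis, and bounded above by Lemma~\ref{lem:elliptic-growth}, because $\mathsf h=\sigma$ and $R+\Theta$ have the same leading growth as $\mathcal W$. Theorem~\ref{thm:extended-maximum-principle} on the half-plane, with $\infty$ exceptional, gives $u\le0$. For binary codes, use the half-strip and periodicity, with the horizontal sides handled by reflection symmetry of the obstacle. I expect the main difficulty to be the gap inequality and the sign of $\Re\Phi$ on the rays, because both depend on the specific choice of $a,b$ in Definition~\ref{def:elliptic-contacts}. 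If monotonicity alone fails, I would fall back on the maximum principle in the slit domain bounded by the gap, comparing $W$ with the harmonic extension of $R+T$.
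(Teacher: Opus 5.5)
Your outline matches the paper's: the sign of $\Re\Phi$ on $C_{a,b}$ gives subharmonicity, the obstacle inequalities are checked on the imaginary axis, and a maximum principle with $\infty$ exceptional finishes. Your final step via boundedness above is fine. The proof breaks down in the two places where the specific scales $a,b$ enter.

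\textbf{Ray positivity for binary plus.} Your inversion argument works for the spherical and binary minus branches, but not for binary plus.
\begin{itemize}
\item There the rational term is $+Kb\kappa'^2/(\pi P(\mathrm iy+0))$, with $P(\mathrm iy+0)=-\sqrt{(y^2-a^2)(y^2-b^2)}<0$, so it is negative.
\item Since $a=1$, $\Phi_{\mathrm B+}$ has a $(1+w^2)^{-1/2}$ singularity at $w=\pm\mathrm ia$. As $y\downarrow b$, both $\frac{ab}{y^2}\Phi_{\mathrm B+}(-\mathrm iab/y)$ and the negative term are $\frac{K\kappa'}{\pi}(y^2-b^2)^{-1/2}$ to leading order, and they cancel.
\item So no termwise sign argument can decide $\Re\Phi_{\mathrm B+}\ge0$ on the rays; you need a genuine estimate.
\end{itemize}
The paper's estimate writes $\Re\Phi_{\mathrm B}(\mathrm iy+0)=\frac{P(\mathrm iy)}{\pi(1-y^2)}\operatorname{pv}\int_b^\infty\frac{(t^2-1)\rho(t)}{t^2-y^2}\,dt$. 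After the substitution $t^2=b^2+u^2$, $c^2=y^2-b^2$, the principal value equals $\int_0^\infty\frac{f(u)-f(c)}{u^2-c^2}\,du$. This is $\ge0$ because $f=(t^2-1)/(t\sqrt{t^2-a^2})$ is increasing, and the prefactor is positive.

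\textbf{The gap inequalities.} Monotonicity of $W$ on a gap plus concavity of $R+\Theta$ in $\ln y$ does not give $W\le R+\Theta$ there. Nothing controls how fast $W$ moves relative to $\Theta$. The missing ingredient is a quantitative slope comparison from the integral formula.
\begin{itemize}
\item Spherical: on the gap, $\sqrt{y^2-a^2}/\sqrt{t^2-a^2}\le y/t$ and $\int_0^\infty(u^2+c^2)^{-1}\,du=\pi/(2c)$ give $0\le-\partial_yW\le\frac{1-\sqrt{1-y^2/b^2}}{2y}$. Using $b=K/\pi\ge\frac12$, this is at most $-\partial_y\Theta$, so $R+\Theta-W$ decreases to $0$ at $\mathrm ib$.
\item Binary minus: the argument is analogous, with $-T_y=((1-y^2)^{-1/2}-1)/(2y)$.
\item Binary plus: your target inequality is wrong. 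The gap $\mathrm i[1,1/\kappa]$ lies where $\mathsf h=H=\frac12\operatorname{arccosh}y>0$, and $W$ increases from $0$ there, so $\mathcal W\le\min\{0,R+\Theta\}$ fails. What is needed is $\frac1{2y}\le\partial_yW\le\frac1{2\sqrt{y^2-1}}$, that is, $T_y\le W_y\le H_y$.
\end{itemize}

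\textbf{Consequences for the contact set.} Your central-segment claim rests on this comparison at $\mathrm ia$ in every branch, so it is unproved as it stands. In the binary plus case it is exactly $R\ge\frac12\ln2$, since $T(\mathrm i)=-\frac12\ln2$. That bound comes from integrating the lower slope bound across the gap.

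Likewise, on the binary rays you assert equality with the obstacle without checking $R+T\le H$, which is also needed for $\mathcal W\le\mathsf h$.
\begin{itemize}
\item For binary minus, this uses $W(\mathrm i)<0$, hence $R<\frac12\ln2$, together with $\operatorname{arccosh}y\ge\ln y$.
\item For binary plus, it uses $H_y\ge T_y$ on the rays together with the gap bound at $\mathrm ib$.
\end{itemize}
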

\begin{proof}
\emph{Subharmonicity.} The construction is harmonic off the imaginary
axis in the $w$-plane.  On the gaps, $\Re\Phi=0$ by
Proposition~\ref{prop:elliptic-boundary}, so even reflection is harmonic
across them.  Thus $\Delta\mathcal W$ is supported on the pullbacks
and reflections of $C_{a,b}$, and its positivity as a measure reduces to showing
$\Re\Phi\ge0$ on $C_{a,b}$, which we check by cases.
On $\mathrm i(-a,a)$, the formula in
\eqref{eq:elliptic-primitive} represents $\Phi$ as a positive integral in the spherical code case.
For binary codes, \eqref{eq:elliptic-density-mass} gives
\[
 \Phi_{\mathrm B}(w)=\frac{P(w)}{\pi(1+w^2)}
 \int_b^\infty\frac{(t^2-1)\rho(t)}{t^2+w^2}\,dt.
\]
The prefactor and integrand are positive at $w=\mathrm iy$ with
$|y|<a\le1$, proving this positivity in both cases.
For the outer rays, consider a point $\mathrm iy$ and set
$c=\sqrt{y^2-b^2}$ and $t=\sqrt{b^2+u^2}$.  Its principal-value integral
has the form
\[
 \operatorname{pv}\int_0^\infty\frac{f(u)}{u^2-c^2}\,du
 =\int_0^\infty\frac{f(u)-f(c)}{u^2-c^2}\,du.
\]
For spheres $f=1/(t\sqrt{t^2-a^2})$ is decreasing.  For binary codes
$f=(t^2-1)/(t\sqrt{t^2-a^2})$ is increasing, but the additional factor
$(1-y^2)^{-1}$ is negative.  Since $P(\mathrm iy)<0$, both cases give
$\Re\Phi\ge0$.  Even reflection has a nonnegative derivative jump.

Summing up, away from the branch points, the reflected function is subharmonic.
By the removable-singularity theorem for subharmonic functions, the subharmonic property extends across these points as well by continuity.
The result transfers to $\mathcal{W}$ in the binary code case by pulling back along $w=\sinh(2\lambda)$.

\emph{Obstacle inequalities.} We check $W \le H$ and $W \le R + T$ on the imaginary axis, then extend to the interior by the maximum principle.
We note $W = H$ on the contact segment and $W = R + T$ on the rays.
We check both inequalities on the gaps, after which the complementary inequalities on the contact segment and rays follow by monotonicity.  Differentiating along $w=\mathrm iy$ gives the comparisons
\begin{equation}\label{eq:elliptic-gap-comparisons}
\begin{array}{ll}
 \text{spherical:}&
 0\le-\dfrac{d}{dy}W(\mathrm iy)
 \le\dfrac{1-\sqrt{1-y^2/b^2}}{2y},\\[6pt]
 \text{binary }-:&
 0\le-\dfrac{d}{dy}W(\mathrm iy)
 \le\dfrac{(1-y^2)^{-1/2}-1}{2y},\\[6pt]
 \text{binary }+:&
 \dfrac1{2y}\le\dfrac{d}{dy}W(\mathrm iy)
 \le\dfrac1{2\sqrt{y^2-1}}.
\end{array}
\end{equation}
For the first two comparisons use
$\sqrt{y^2-a^2}/\sqrt{t^2-a^2}\le y/t$ for $t\ge b>y$.
For the last use
$t/y\le\sqrt{t^2-1}/\sqrt{y^2-1}$ for the lower bound and
$\sqrt{t^2-1}\le t$ for the upper bound.  The remaining integrals reduce,
by $t^2=b^2+u^2$, to integrals of the form $\int_0^\infty(u^2+c^2)^{-1}du=\pi/(2c)$.

On the sphere $\mathsf h(\mathrm iy)=0$ and
\[
 -\frac{d}{dy}\Theta(\mathrm iy)=
 \begin{cases}(1-\sqrt{1-4y^2})/(2y),&0<y<1/2,\\
 1/(2y),&y>1/2.
 \end{cases}
\]
Because $b\ge1/2$, the first comparison shows that
$R+\Theta-W$ decreases across the gap to its value zero at $b$.
Also $W$ decreases from zero at $a$.  Thus both obstacle inequalities
hold on the gap.  They hold on the central segment because $\Theta$
is decreasing, and on the rays because $W$ continues decreasing.

On the binary minus gap, $H=0$ and
$-T_y=((1-y^2)^{-1/2}-1)/(2y)$.  The same argument applies.
Here $W(\mathrm i)<0$, so $R<\tfrac12\ln2$; on the rays
$H=\tfrac12\operatorname{arccosh}y\ge R+\tfrac12\ln(y/2)$.
On the plus gap the last comparison is $T_y\le W_y\le H_y$.
Since $H-W=0$ at $a=1$ and $R+T-W=0$ at $b$, both differences
are nonnegative.  Integrating the lower bound gives
$R\ge\tfrac12\ln2$, which also checks the central segment.
On the rays $H_y\ge T_y$, so the support inequality persists.

Finally both obstacles are harmonic in the right half-plane in the
spherical case, and in the binary fundamental half-strip.  Their
differences from $W$ have at most logarithmic growth by
\eqref{eq:elliptic-growth}.  The half-plane maximum principle for
sublinear growth extends the boundary inequalities to the interior
(one may subtract $\epsilon\Re w^p$, $0<p<1$, and let $\epsilon\downarrow0$).
Reflection gives the obstacle inequalities globally.  Together with the values on the contact set, this proves the lemma.
\end{proof}

\begin{lemma}[Identification by the maximum principle]\label{lem:elliptic-maximality}
For each branch, $V_R=\mathcal W$ when
\eqref{eq:elliptic-endpoint-matching} holds.
\end{lemma}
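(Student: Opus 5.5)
One inequality is already in hand. Lemma~\ref{lem:elliptic-obstacle-check} shows that $\mathcal W$ is subharmonic on $\mathbb C$ and satisfies $\mathcal W\le\psi_R$, so $\mathcal W\in\mathcal V(R)$. Since $\mathcal W$ is continuous, this gives $\mathcal W\le V_R$. It remains to prove $V_R\le\mathcal W$.

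For the reverse inequality, fix any $v\in\mathcal V(R)$ and set $u=v-\mathcal W$. I will show $u\le0$ and then pass to the upper semicontinuous envelope. I work on the complement $D$ of the contact set, pulled back and reflected in the binary case: on the sphere $D=\mathbb C\setminus C_{a,b}$, in the binary case the preimage under $w=\sinh(2\lambda)$ together with its reflections and $\mathrm i\pi/2$-translates. On $D$ the function $\mathcal W$ is harmonic, across the gaps by the even reflection used in Lemma~\ref{lem:elliptic-obstacle-check}. Hence $u$ is subharmonic on $D$. At every boundary point $\zeta$ of $D$, that is, on the contact set, upper semicontinuity of $v$ and continuity of $\mathcal W$ give $\limsup_{z\to\zeta}u(z)\le v(\zeta)-\mathcal W(\zeta)\le\psi_R(\zeta)-\mathcal W(\zeta)=0$. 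The last equality is the contact-set equality in Lemma~\ref{lem:elliptic-obstacle-check}. The boundary is nonpolar, since it contains segments. The extended maximum principle, Theorem~\ref{thm:extended-maximum-principle}, would then give $u\le0$ on $D$, provided $u$ is bounded above. On the contact set itself, $v\le\psi_R=\mathcal W$ directly, so $v\le\mathcal W$ everywhere, and taking the envelope gives $V_R\le\mathcal W$.

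The main obstacle is the boundedness hypothesis, and I will handle it separately for each setting.

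\emph{Spherical case.} Here $v\le\mathsf h=|\Re\lambda|$, while Lemma~\ref{lem:elliptic-growth} gives $\mathcal W=|\Re\lambda|-\tfrac12\ln|\lambda|+O(1)$. So $u\le\tfrac12\ln|\lambda|+O(1)$, which is not bounded. I will therefore not apply Theorem~\ref{thm:extended-maximum-principle} to $u$ directly. Instead I compare $v$ with $\mathcal W$ on $D$ through the harmonic majorant. The bound $v\le R+\Theta$, together with $\Theta(\lambda)=|\Re\lambda|-\tfrac12\ln|\lambda|+O(1)$ at infinity (read off from \eqref{eq:spherical-Theta}, since $\operatorname{Re}w\approx2|\Re\lambda|$ and the logarithm contributes $-\tfrac12\ln|\lambda|$), gives $v\le\mathcal W+O(1)$ near infinity. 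Hence $u$ is bounded above, and infinity sits in the exceptional polar set.

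\emph{Binary case.} Here $\mathcal W=|\Re\lambda|+O(1)$ by Lemma~\ref{lem:elliptic-growth}, while $v\le\mathsf h=|\Re\lambda|$. So $u$ is bounded above outright. The $\mathrm i\pi/2$-periodicity of $\mathcal W$ and of the obstacle keeps the bound uniform along the vertical direction.

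In both cases $u$ is then a bounded-above subharmonic function on $D$, with nonpositive boundary limits except at infinity. Theorem~\ref{thm:extended-maximum-principle} yields $v\le\mathcal W$, and hence $V_R=\mathcal W$.
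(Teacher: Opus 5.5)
Your proof is correct and follows essentially the paper's argument: admissibility from Lemma~\ref{lem:elliptic-obstacle-check} gives $\mathcal W\le V_R$, and for the reverse inequality you apply the extended maximum principle to $v-\mathcal W$ off the contact set, using $v\le\psi_R=\mathcal W$ on the contact set, boundedness above near infinity from \eqref{eq:elliptic-growth} (via $v\le R+\Theta$ on the sphere), and infinity as the polar exception. The only difference is cosmetic: in the binary case you bound $v$ by $\mathsf h$ rather than by $R+\Theta$, which works equally well because both are $|\Re\lambda|+O(1)$ there, with periodicity giving uniformity in the vertical direction (strictly, the maximum principle should be applied on each component of the complement, each of which has nonpolar boundary).
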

\begin{proof}
By Lemma~\ref{lem:elliptic-obstacle-check}, $\mathcal W$ is an admissible
minorant and is harmonic off its contact set.
We let $C$ be the preimage of the contact set in the $\lambda$-plane.
The growth estimate \eqref{eq:elliptic-growth} bounds
$\psi_R-\mathcal W\le R+\Theta-\mathcal W$ above near infinity.
For binary codes the bound is uniform in the imaginary direction by
periodicity of $\Theta$ and $\mathcal W$.
For any subharmonic minorant $v\le\psi_R$, the difference
$v-\mathcal W$ is therefore subharmonic and bounded above away from $C$,
with nonpositive upper boundary limits at every finite boundary point.
The set $C$ contains a nondegenerate segment, so each component of its
complement has nonpolar boundary.  Theorem~\ref{thm:extended-maximum-principle}, with infinity as a polar
exception, gives $v\le\mathcal W$ off $C$; this inequality already holds
on $C$.  Thus $V_R=\mathcal W$.
\end{proof}

\subsection{Evaluation of rates and thresholds}

We can therefore relate $W$ directly to
the rates in Theorem~\ref{thm:elliptic-obstacle}.

\begin{lemma}[Evaluation of the rate]\label{lem:elliptic-rate-evaluation}
For the binary branches, with $b=1/\kappa$ and $b=1$, respectively,
\[
 R_\pm(\kappa)=\tfrac12\ln2\pm B(\kappa)
 =W_{\mathrm B\pm}(\mathrm ib)+\tfrac12\ln(2/b).
\]
For the spherical branch, with $b=K/\pi$,
\begin{align*}
 R_{\mathrm S}(\kappa)
 &=\frac12+\frac12\ln\frac K\pi+\frac14\ln\kappa
   +\frac{K\bigl((1+\kappa^2)K'-2E'\bigr)}{2\pi}\\
 &=W_{\mathrm S}(\mathrm ib)+\tfrac12+\tfrac12\ln b.
\end{align*}
Thus the rates in Theorem~\ref{thm:elliptic-obstacle} satisfy
\eqref{eq:elliptic-endpoint-matching}.
\end{lemma}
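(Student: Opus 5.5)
The plan is to compute $W(\mathrm ib)$ from Proposition~\ref{prop:elliptic-gap-inversion}. Adding it to $W(\mathrm ib)=\Re\int_\gamma\alpha$ gives
\[
 2W(\mathrm ib)=\Re\int_\gamma\bigl(\alpha-\iota_E^*\alpha\bigr),
\]
and Lemma~\ref{lem:elliptic-differential-inversion} makes the integrand explicit. The differential $I$ then disappears. Along the lower gap $w=-\mathrm iy$ with $y$ running from $a$ to $b$, and there $Y$ is real. On this path $\frac12\,dw/w$ has real part $\frac12\,d\ln y$ (the argument of $w$ is constant), so it contributes $\pm\frac12\ln(b/a)=\pm\frac12\ln(1/\kappa)$. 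The sign has to be tracked against the orientation, and it should produce the $\frac14\ln\kappa$ appearing in $B$ and $R_{\mathrm S}$.

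For the binary branches the remaining term is $\pm\frac{Kb\kappa'^2}{\pi}\Re\int_\gamma dw/Y$. On the gap $dw/Y=-\mathrm i\,dy/(\pm\sqrt{(y^2-a^2)(b^2-y^2)})$ with a purely imaginary $Y$. I will use the boundary value from the right half-plane, which is $P(-\mathrm iy+0)=\mathrm i\,\epsilon\sqrt{(y^2-a^2)(b^2-y^2)}$ with a sign $\epsilon$ fixed by continuation from the positive axis. Then $dw/Y$ is real, and the integral is the standard complete integral
\[
 \int_a^b\frac{dy}{\sqrt{(y^2-a^2)(b^2-y^2)}}=\frac{K'}{b}.
\]
This follows from the substitution $y^2=b^2-(b^2-a^2)\sin^2\theta$, which gives $K(\kappa')/b$. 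The gap term is therefore $\pm\kappa'^2KK'/\pi$, and halving gives $W_{\mathrm B\pm}(\mathrm ib)=\pm\bigl(\frac14\ln\kappa+\frac{\kappa'^2KK'}{2\pi}\bigr)+(\text{log term})$. The $\frac12\ln(2/b)$ then reconciles the log terms. For the plus branch $b=1/\kappa$, so $\frac12\ln(2/b)=\frac12\ln2+\frac12\ln\kappa$. For the minus branch $b=1$. Checking that the logarithm bookkeeping lands exactly on $\frac12\ln2\pm\frac14\ln\kappa$ is routine once the orientation sign is fixed.

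For the sphere the extra term is $\Re\int_\gamma Y\,dw/w^2$. With $w=-\mathrm iy$ we have $dw/w^2=\mathrm i\,dy/y^2$ up to sign and $Y$ is purely imaginary, so the term equals $\pm\int_a^b\sqrt{(y^2-a^2)(b^2-y^2)}\,dy/y^2$. I will reduce this to complete elliptic integrals of modulus $\kappa'$ with the same substitution; integration by parts reduces it to $\int dy/\sqrt{\cdots}$ and $\int y^2\,dy/\sqrt{\cdots}$. The expected value is $b\bigl((1+\kappa^2)K'-2E'\bigr)$, since the $y^2$-moment gives $b\,E'$ and the $1/y^2$ terms give combinations of $K'$ and $E'$ with $a^2$ weights. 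Substituting $b=K/\pi$ then yields $\frac{K((1+\kappa^2)K'-2E')}{2\pi}$ after halving. The remaining step is to check that $R_{\mathrm S}-W_{\mathrm S}(\mathrm ib)=\frac12+\frac12\ln b$, which holds because $\frac12\ln b=\frac12\ln(K/\pi)$. The final assertion then follows because $\Theta(\mathrm ib)=-\frac12-\frac12\ln b$ for $b\ge\frac12$ and $T(\mathrm ib)=\frac12\ln(b/2)$ for $b\ge1$ (Definition~\ref{def:elliptic-coordinates}), so $R=W(\mathrm ib)-\Theta(\mathrm ib)$ or $R=W(\mathrm ib)-T(\mathrm ib)$ is exactly \eqref{eq:elliptic-endpoint-matching}. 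I also need $K/\pi\ge\frac12$, which holds because $K\ge\pi/2$.

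The main obstacle is the spherical elliptic integral evaluation, together with consistently fixing the branch signs of $Y$ on the gap. I would first verify the signs in the limit $\kappa\to1$, where the gap collapses and $W(\mathrm ib)\to0$, and check them against the known value $\frac12$ at $\delta_\pi$ in the binary case.
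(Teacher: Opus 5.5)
Your proposal is correct and is essentially the paper's proof: you combine Proposition~\ref{prop:elliptic-gap-inversion} with Lemma~\ref{lem:elliptic-differential-inversion} to get $2W(\mathrm ib)=\Re\int_\gamma(\alpha-\iota_E^*\alpha)$, evaluate $\int_\gamma dw/Y=K'/b$, $\int_\gamma w^2\,dw/Y=-bE'$ and $\int_\gamma dw/w=\ln(b/a)$ by the substitution $y^2=b^2-(b^2-a^2)\sin^2\theta$, and then integrate by parts, which is the paper's identity $\frac{Y}{w^2}dw=(2w^2+a^2+b^2)\frac{dw}{Y}-d(Y/w)$ with $Y=0$ at both ends of $\gamma$, before subtracting the obstacle values. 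The signs you left open are forced, not free: the right-half-plane boundary value is $Y=-\mathrm i\sqrt{(y^2-a^2)(b^2-y^2)}$ (your $\epsilon=-1$), so $dw/Y$ and $Y\,dw/w^2$ are positive on the gap as you implicitly assumed, and your $\kappa\to1$ sanity check could not detect the sign of the $\kappa'^2KK'$ term because it vanishes there, whereas the limit $\kappa\to0$, where $R_+\to\ln2$, would.
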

\begin{proof}
Proposition~\ref{prop:elliptic-gap-inversion} gives
\[
 W(\mathrm ib)=\frac12\Re\int_\gamma
                   (\alpha-\iota_E^*\alpha).
\]
Along $\gamma$, write $w=-\mathrm iy$ and
$Y=-\mathrm i\sqrt{(y^2-a^2)(b^2-y^2)}$, with $a\le y\le b$.
The substitution $y^2=b^2-(b^2-a^2)\sin^2\theta$ gives
\[
 \int_\gamma\frac{dw}{Y}=\frac{K'}b,\qquad
 \int_\gamma\frac{w^2\,dw}{Y}=-bE',\qquad
 \int_\gamma\frac{dw}{w}=\ln\frac ba=-\ln\kappa.
\]
The curve equation also gives
\[
 \frac{Y}{w^2}\,dw
 =(2w^2+a^2+b^2)\frac{dw}{Y}-d\!\left(\frac Yw\right).
\]
Since $Y=0$ at both endpoints of $\gamma$,
\[
 \int_\gamma\frac{Y}{w^2}\,dw
 =b\bigl((1+\kappa^2)K'-2E'\bigr).
\]
Applying Lemma~\ref{lem:elliptic-differential-inversion} therefore yields
\begin{align*}
 W_{\mathrm S}(\mathrm ib)
 &=\frac{K\bigl((1+\kappa^2)K'-2E'\bigr)}{2\pi}
   +\frac14\ln\kappa,\\
 W_{\mathrm B\pm}(\mathrm ib)
 &=\pm\frac{\kappa'^2KK'}{2\pi}-\frac14\ln\kappa,
\end{align*}
where $b=K/\pi$ in the spherical case.
Finally, $\Theta(\mathrm ib)=-\tfrac12-\tfrac12\ln b$ and
$T(\mathrm ib)=\tfrac12\ln(b/2)$.
Subtracting these obstacle values and using the two binary scales
$b=1/\kappa$ and $b=1$ gives the stated rates.
\end{proof}

\begin{lemma}\label{lem:elliptic-slope}
For every $\sigma>0$,
\[
 \sup_{\eta\in\mathbb R}V_R(\sigma+\mathrm i\eta)=\mathcal W(\sigma).
\]
The function $\sigma\mapsto\mathcal W(\sigma)$ is convex on $[0,\infty)$.
Consequently $c(R)=\mathcal W'(0+)$.
\end{lemma}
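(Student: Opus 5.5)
We work with $V_R=\mathcal W$ from Lemma~\ref{lem:elliptic-maximality} and treat the three claims in order: the supremum over vertical lines, convexity, and the slope formula.

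\emph{Step 1: the supremum on a vertical line.} Fix $\sigma>0$. The key observation is that $\mathcal W(\sigma+\mathrm i\eta)\le\mathcal W(\sigma)$, i.e.\ the real axis is the ridge.

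For spheres, $W$ is harmonic in the right half-plane and even in $\eta$. We study $u(w)=\partial_\eta W=-\Im\Phi$. It is harmonic there, vanishes on the real axis by conjugation symmetry, and is odd in $\eta$. On the boundary axis $w=\mathrm iy$ with $y>0$ it is $\Re\Phi(\mathrm iy)\cdot(-1)\le0$. Indeed, $\partial_\eta W$ at the boundary is the tangential derivative $dW(\mathrm iy)/dy$, which is $\le0$ in the spherical case:
\begin{itemize}
\item on the central segment it vanishes;
\item on the gap it is $\le0$ by \eqref{eq:elliptic-gap-comparisons};
\item on the rays it equals $-1/(2y)$.
\end{itemize}
By Lemma~\ref{lem:elliptic-growth}, $u=O(|w|^{-1})$ at infinity. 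The maximum principle on the quarter-plane $\{\Re w>0,\Im w>0\}$ then gives $u\le0$, so $\eta\mapsto\mathcal W(\sigma+\mathrm i\eta)$ is nonincreasing for $\eta>0$. By evenness in $\eta$, the supremum is attained at $\eta=0$.

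For binary codes we argue the same way in the $\lambda$ half-strip $0<\Im\lambda<\pi/4$. There $\partial_\eta\mathcal W$ is harmonic and vanishes on $\Im\lambda=0$. It also vanishes on $\Im\lambda=\pi/4$, because of the even reflection across the horizontal side. On the segment $\Re\lambda=0$ it is the tangential derivative along the imaginary axis in the $w$-plane, multiplied by the positive factor $|dw/d\lambda|$. That tangential derivative is $\le0$ on $\mathrm i[0,1]$, which is the central segment together with (for the minus branch) the gap with nonpositive derivative. The pieces $y>1$ correspond to the side $\Im\lambda=\pi/4$, where $\sinh(2\lambda)=\mathrm i\cosh(2\Re\lambda)$; there the normal derivative vanishes after reflection, which is what we need.

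The plus branch is more delicate: the gap $[1,1/\kappa]$ and the rays lie on the top side, where $\mathcal W$ increases in $y$. However, on that side $\partial_\eta\mathcal W=0$ by reflection, so only the vertical side matters, and there only $\mathrm i[0,1]$, the contact segment, appears. At infinity, Lemma~\ref{lem:elliptic-growth} and the decay $O(e^{-4\Re\lambda})$ make $\partial_\eta\mathcal W\to0$. The maximum principle then gives $\partial_\eta\mathcal W\le0$ on the half-strip. Periodicity and reflection extend this to all $\eta$.

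\emph{Step 2: convexity.} On the real axis, $\mathcal W(\sigma)=\Re\int_0^\sigma\Phi$, so convexity of $\sigma\mapsto\mathcal W(\sigma)$ reduces to monotonicity of $\Phi(x)$ for real $x>0$. Harmonicity gives $\partial_\sigma^2\mathcal W=-\partial_\eta^2\mathcal W$. At $\eta=0$, Step 1 shows $\eta=0$ is a maximum of $\eta\mapsto\mathcal W(\sigma+\mathrm i\eta)$, so $\partial_\eta^2\mathcal W\le0$ there. Hence $\partial_\sigma^2\mathcal W(\sigma)\ge0$, which is convexity on $(0,\infty)$. Continuity at $0$ extends it to $[0,\infty)$. (In the binary case the map $w=\sinh 2\lambda$ is real-analytic and harmonicity is preserved, so no extra work is needed.)

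\emph{Step 3: the slope.} We have $m_R(\sigma)=\mathcal W(\sigma)$ and $\mathcal W(0)=0$. For a convex function vanishing at $0$, the quotient $\mathcal W(\sigma)/\sigma$ is nondecreasing, so its infimum over $\sigma>0$ is the limit as $\sigma\downarrow0$, which is $\mathcal W'(0+)$. This limit is finite because $\Phi$ is continuous at $0$; for instance $\mathcal W_{\mathrm S}'(0+)=\Phi_{\mathrm S}(0)$.

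\textbf{Main obstacle.} The hard part is Step 1: checking the sign of the tangential derivative on every boundary piece and handling decay at infinity. In the binary plus branch one must correctly identify which boundary pieces lie on the reflecting horizontal side. If the quarter-plane maximum principle has trouble at the branch points, we would use bounded harmonic functions with finitely many boundary exceptions, which are polar, as permitted by Theorem~\ref{thm:extended-maximum-principle}.
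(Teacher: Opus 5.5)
Your route is the paper's: a maximum principle for $\partial_\eta\mathcal W$ on the upper quarter-plane (spheres) or the upper quarter-strip (binary), then $\partial_\sigma^2\mathcal W=-\partial_\eta^2\mathcal W\ge0$ on the real axis, then monotone secant slopes. The spherical case and Steps~2--3 are fine, apart from a slip: you write $-\Re\Phi(\mathrm iy)$ where you mean $-\Im\Phi(\mathrm iy)=dW(\mathrm iy)/dy$.

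The genuine error is the binary boundary datum on the top side $\Im\lambda=\pi/4$, where you assert $\partial_\eta\mathcal W=0$ ``by even reflection''. Even reflection only makes the extension continuous. The one-sided normal derivative vanishes only where the reflection is harmonic, i.e.\ on the gap. Since $\partial_\eta\sinh(2\lambda)=-2\sinh(2\sigma)$ at $\lambda=\sigma+\mathrm i\pi/4$, the value from inside the strip is
\[
 \partial_\eta\mathcal W(\sigma+\mathrm i\pi/4)
 =-2\sinh(2\sigma)\,\Re\Phi_{\mathrm B}(\mathrm i\cosh(2\sigma)+0).
\]
This is a negative multiple of the $w$-plane \emph{normal} derivative at $y=\cosh(2\sigma)$. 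The contact rays $y\ge b$ make up the whole top side for the minus branch, and the part $\cosh(2\sigma)\ge1/\kappa$ for the plus branch. There $\Re\Phi_{\mathrm B}$ measures the Riesz mass of $\mathcal W$, and the principal-value formula in Lemma~\ref{lem:elliptic-obstacle-check} (whose $f$ is strictly increasing) shows it is strictly positive. So $\partial_\eta\mathcal W<0$ there, and your claim that ``only the vertical side matters'' is false. The fact that $\mathcal W$ increases in $y$ along the top side concerns $\partial_\sigma$, not $\partial_\eta$.

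What the argument needs is the sign, not vanishing: $\Re\Phi_{\mathrm B}\ge0$ on $C_{a,b}$, from the subharmonicity part of Lemma~\ref{lem:elliptic-obstacle-check}. Equivalently, the evenly reflected $\mathcal W$ is subharmonic, so its jump in normal derivative across $\Im\lambda=\pi/4$ is nonnegative, and symmetry then forces the one-sided derivative from below to be $\le0$. With boundary data $\le0$ instead of $=0$, the maximum principle still gives $\partial_\eta\mathcal W\le0$; this is exactly how the paper argues.

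You should also justify boundedness of $\partial_\eta\mathcal W$ before using the maximum principle with finitely many exceptional boundary points. The paper gets $|\partial_\eta\mathcal W|\le1$ from the Lipschitz regularity of the obstacle solution. If you prefer to check boundedness from the explicit formulas, in the binary case you must do it in the $\lambda$-variable. The reason is that $\Phi_{\mathrm B}$ itself blows up like $(w-\mathrm i)^{-1/2}$ at $w=\mathrm i$, the image of the corner; only the factor $2\cosh(2\lambda)$ makes the $\lambda$-derivative bounded there.
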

\begin{proof}
To show the maximum statement, we will show that derivative $\partial_\eta\mathcal W(\sigma+\mathrm i\eta) \le 0$ for $\eta \ge 0$ in both the binary and spherical case.  This derivative is automatically harmonic
in the upper quadrant for spheres, and in the upper quarter-strip for
binary codes and equal to zero on the real axis.  Its values on the imaginary
axis are nonpositive by \eqref{eq:elliptic-gap-comparisons}.  On the upper boundary $\Im\lambda=\pi/4$ of the binary half-strip, we compute the derivative
\[
 \partial_\eta\mathcal W(\sigma+\mathrm i\pi/4)
 =-2\sinh(2\sigma)\Re\Phi_{\mathrm B}
                  (\mathrm i\cosh(2\sigma)+0)\le0.
\]
For the spherical case, the derivative tends to zero at infinity by
Lemma~\ref{lem:elliptic-growth}.  For binary codes,
$w=\sinh(2\lambda)$ gives the complex derivative
$2\cosh(2\lambda)\Phi_{\mathrm B}(w)
=\coth(2\lambda)+O(e^{-2\Re\lambda})$,
uniformly in the closed quarter-strip, so its imaginary part tends to zero.
Since the obstacle in both cases is $1$-Lipschitz, the solution to the obstacle problem is as well (the proof of~\cite[Theorem~3.8]{Silvestre2007} carries over word-for-word).  Then $|\partial_\eta \mathcal W|\le 1$, which supplies the needed regularity to apply the maximum principle.  We conclude that $\partial_\eta\mathcal W\le0$.
Evenness and binary periodicity consequently give
\[
 \sup_{\eta\in\mathbb R}V_R(\sigma+\mathrm i\eta)=\mathcal W(\sigma).
\]

For each $\sigma>0$, the function
$\eta\mapsto\mathcal W(\sigma+\mathrm i\eta)$ has a maximum at $\eta=0$,
so $\partial_\eta^2\mathcal W(\sigma,0)\le0$.
Since $\mathcal W$ is harmonic near the positive real axis,
\[
 \frac{d^2}{d\sigma^2}\mathcal W(\sigma)
 =-\partial_\eta^2\mathcal W(\sigma,0)\ge0.
\]
Thus $\mathcal W$ is convex on $[0,\infty)$ by continuity at zero.
Its secant slopes $\mathcal W(\sigma)/\sigma$ are therefore increasing,
and $\mathcal W(0)=0$ gives
\[
 c(R)=\inf_{\sigma>0}\frac{\mathcal W(\sigma)}\sigma
     =\mathcal W'(0+).
\]
\end{proof}

\begin{proof}[Completion of Theorem~\ref{thm:elliptic-obstacle}]
The substitution $t=b/u$ gives
$\int_b^\infty t^{-2}\rho(t)dt=(K-E)/(\kappa^2b^3)$.
Hence the derivatives at zero are $abI(0)/\pi$ on the sphere and
$2ab(K/b-I(0))/\pi$ in the binary case.  By
Lemma~\ref{lem:elliptic-slope}, these derivatives equal $c(R)$.
Substitution of the three scales proves the thresholds in
Theorem~\ref{thm:elliptic-obstacle}.

All displayed quantities are continuous on $0<\kappa<1$.
The plus threshold decreases strictly because $E$ does.  The minus
threshold increases strictly, as seen from
\[
 \frac{E-(1-\kappa^2)K}{\kappa}
 =\kappa\int_0^{\pi/2}
       \frac{\cos^2\theta}{\sqrt{1-\kappa^2\sin^2\theta}}\,d\theta.
\]
The spherical threshold is $\kappa$ times the mean of $\sin^2\theta$
under density proportional to $(1-\kappa^2\sin^2\theta)^{-1/2}$;
increasing $\kappa$ strictly increases this mean.  Since $\psi_R$ and hence $V_R$ increase with $R$, $c(R)$ is
nondecreasing, so the rates have the same strict ordering as their
thresholds.
\end{proof}

\section{Asymptotic upper bounds}\label{sec:upper-bounds}

The approach to the upper bound extends the constructions in~\cite[Chapter~2]{OpenAI2026} for spherical codes and~\cite[Section~4.2]{Salmon2026HalfRate} for binary codes.  While these constructions only establish a sphere packing and half-rate limit, respectively, in loc.~cit., we will do a more detailed analysis of both constructions.  In particular, we first draw a parallel between the two as living on bosonic, resp. fermionic Fock space and involving a pair $(A, \gamma)$ of matrix and distinguished direction.  This then gives a natural formulation in terms of interlacing spectra which leads to a variational problem.  Finally, we solve this variational problem and arrive at an upper bound that matches the lower bound from the previous section.

\subsection{Upper bounds from interlacing spectra}

Let $A$ be a real symmetric $N\times N$ matrix, let
$\gamma\in\mathbb R^N$ be a unit vector, and choose an isometry
$U:\mathbb R^{N-1}\to\gamma^\perp$.  Set $B=U^{\mathsf T}AU$.
Write $x_1\ge\cdots\ge x_N$ and $y_1\ge\cdots\ge y_{N-1}$ for the
spectra of $A$ and $B$, respectively.  Cauchy interlacing gives
\[
 x_1\ge y_1\ge x_2\ge\cdots\ge y_{N-1}\ge x_N.
\]
Conversely, every interlacing pair $(x; y)$ is realized by a pair $(A, \gamma)$.

Define the binary and Bose entropy functions by
\[
 H_{\mathrm{bin}}(p)=-p\ln p-(1-p)\ln(1-p),\qquad
 H_{\mathrm{Bose}}(p)=(p+1)\ln(p+1)-p\ln p,
\]
with $0\ln0=0$, and put
\[
\begin{array}{c|c|c|c}
 \star&I_\star&f_\star(x)&\varphi_\star(x)\\ \hline
 \mathrm B&[0,1]&H_{\mathrm{bin}}\bigl((1-\sqrt x)/2\bigr)&\sqrt{1-x}\\[2pt]
 \mathrm S&[0,\infty)&H_{\mathrm{Bose}}\bigl((\sqrt{1+4x}-1)/2\bigr)
   &\sqrt{4x/(1+4x)}
\end{array}
\]
For a datum with spectrum in $I_\star$, its cost and correlation are
\begin{equation}\label{eq:upper-matrix-statistics}
 C_\star(A,\gamma)=\operatorname{tr}f_\star(A)-\operatorname{tr}f_\star(B),
 \qquad
 S_\star(A,\gamma)=\gamma^{\mathsf T}\varphi_\star(A)\gamma.
\end{equation}
Thus the cost is $\sum_j f_\star(x_j)-\sum_k f_\star(y_k)$.
If $A=\operatorname{diag}(x_1,\ldots,x_N)$, the correlation is
$\sum_j\gamma_j^2\varphi_\star(x_j)$.

The spherical construction follows the organization of the $N$-qubit
construction in \cite[Section~4.2]{Salmon2026HalfRate}: choose a Gaussian
state and a linear observable, then reduce their statistics to the
one-particle matrix $A$.  The bosonic operation retaining the compressed
matrix $B$ is removal of one oscillator.

\begin{definition}[$N$-mode bosonic construction]\label{def:bosonic-construction}
Let $A\succeq0$.  On bosonic Fock space
$\mathcal F_N=\bigoplus_{m\ge0}\operatorname{Sym}^m\mathbb C^N$, let
$c_j,c_j^\dagger$ be annihilation and creation operators, and put
\[
 q_j=(c_j+c_j^\dagger)/\sqrt2,\qquad
 p_j=(c_j-c_j^\dagger)/(\mathrm i\sqrt2).
\]
They satisfy $[q_j,p_k]=\mathrm i\delta_{jk}I$ and
$[q_j,q_k]=[p_j,p_k]=0$.
Let $|0\rangle$ be the vacuum vector and $\mathcal N_{0,2A}$ be the
centered Gaussian probability measure with covariance $2A$.
Define the density operator
\[
 \rho_A=\int_{\mathbb R^N}
 e^{\mathrm i k^{\mathsf T}q}|0\rangle\langle0|
 e^{-\mathrm i k^{\mathsf T}q}\,d\mathcal N_{0,2A}(k),
\]
which is a mixture of coherent states with respect to the Gaussian density.
Each coherent state $e^{\mathrm i k^{\mathsf T}q}|0\rangle$ has mean
quadratures $(0,k)$ and covariance $I_{2N}/2$.  Since $\mathbb E k=0$,
the mixture is centered, so its covariance is the expectation of the
symmetrized quadrature products.  Averaging the second moments gives
\[
 \mathcal V_A=\tfrac12 I_{2N}
 +\mathbb E_k\begin{pmatrix}0&0\\0&kk^{\mathsf T}\end{pmatrix}
 =\tfrac12\begin{pmatrix}I_N&0\\0&I_N+4A\end{pmatrix},
\]
using $\mathbb E(kk^{\mathsf T})=2A$.

Select $q_\gamma=\sum_j\gamma_jq_j$ and let
$O=\begin{pmatrix}\gamma^{\mathsf T}\\U^{\mathsf T}\end{pmatrix}$,
so that $O\gamma=e_1$.  Write
\[
 \mathcal F_N=\mathcal F_1\otimes\mathcal F_{N-1}.
\]
Rotating the $\gamma$-oscillator to the first factor and tracing over
that factor retains the coherent-state displacement $U^{\mathsf T}k$,
whose Gaussian covariance is $2B$.  Thus
\[
 \operatorname{tr}_{\mathcal F_1}\rho_{OAO^{\mathsf T}}=\rho_B.
\]
For a density operator write
$\operatorname{Ent}(\rho)=-\operatorname{tr}(\rho\ln\rho)$.
Define its entropy cost and correlation by
\[
 \operatorname{Cost}=\operatorname{Ent}(\rho_A)-\operatorname{Ent}(\rho_B),
 \qquad
 \operatorname{Corr}=
 \frac{\operatorname{tr}(\rho_A^{1/2}q_\gamma\rho_A^{1/2}q_\gamma)}
      {\operatorname{tr}(\rho_Aq_\gamma^2)}.
\]
For $N=1$, $\mathcal F_0=\mathbb C$ and $\operatorname{Ent}(\rho_B)=0$.
\end{definition}

\begin{proposition}\label{prop:adjacent-degree-weights}
On one-mode Fock space, let $|m\rangle$ be the normalized vector of
degree $m$ and put
\[
 \tau_r=(1-r)\sum_{m\ge0}r^m|m\rangle\langle m|,
 \qquad 0<r<1.
\]
Let $Q$ be a self-adjoint observable satisfying
\[
 \langle m|Q|n\rangle=0\qquad\text{unless }|m-n|=1,
 \qquad m,n\ge0.
\]
If $0<\operatorname{tr}(\tau_rQ^2)<\infty$, then
\[
 \frac{\operatorname{tr}(\tau_r^{1/2}Q\tau_r^{1/2}Q)}
      {\operatorname{tr}(\tau_rQ^2)}
 =\frac{2\sqrt r}{1+r}.
\]
In particular, this holds for $Q=q$.
\end{proposition}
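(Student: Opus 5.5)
We expand both traces in the Fock basis, where $\tau_r$ is diagonal with entries $w_m=(1-r)r^m$. Write $Q_{mn}=\langle m|Q|n\rangle$, so that $|Q_{mn}|^2=|Q_{nm}|^2$ by self-adjointness. Then
\[
 \operatorname{tr}(\tau_r^{1/2}Q\tau_r^{1/2}Q)=\sum_{m,n}\sqrt{w_mw_n}\,|Q_{mn}|^2,
 \qquad
 \operatorname{tr}(\tau_rQ^2)=\sum_{m,n}w_m|Q_{mn}|^2 .
\]
The hypothesis restricts both sums to the adjacent pairs $n=m\pm1$. We regroup each sum over unordered edges $\{m,m+1\}$ and set $c_m=|Q_{m,m+1}|^2\ge0$.

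In the numerator, edge $m$ contributes $2\sqrt{w_mw_{m+1}}\,c_m=2(1-r)r^m\sqrt r\,c_m$. In the denominator, edge $m$ contributes $(w_m+w_{m+1})c_m=(1-r)r^m(1+r)\,c_m$.

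Both sums are therefore $\sum_m(1-r)r^mc_m$ multiplied by the constants $2\sqrt r$ and $1+r$ respectively. The common factor $\sum_m(1-r)r^mc_m$ is exactly $\operatorname{tr}(\tau_rQ^2)/(1+r)$. It is positive and finite by hypothesis, so it cancels and the ratio equals $2\sqrt r/(1+r)$.

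The only point needing care is justifying the rearrangement into edge sums. All terms are nonnegative, and the denominator converges absolutely by hypothesis, so Tonelli allows the regrouping. The numerator is then dominated termwise by the denominator via AM--GM, $2\sqrt{w_mw_{m+1}}\le w_m+w_{m+1}$, so it converges absolutely as well. We should also check that the trace $\operatorname{tr}(\tau_r^{1/2}Q\tau_r^{1/2}Q)$ is given by the displayed double sum. This is the Hilbert--Schmidt norm squared of $\tau_r^{1/4}Q\tau_r^{1/4}$, so the formula holds.

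Finally, for $Q=q=(c+c^\dagger)/\sqrt2$, the matrix elements are $\langle m|q|m+1\rangle=\sqrt{(m+1)/2}$, which are nonzero only on adjacent degrees. In this case
\[
 \operatorname{tr}(\tau_rq^2)=\sum_m(1-r)r^m(1+r)\,\frac{m+1}{2}
\]
is finite and positive for $0<r<1$, so the hypotheses hold and the formula applies.
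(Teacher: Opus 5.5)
Your proposal is correct and follows the paper's proof. Both expand the traces in the Fock basis, pair the two directions of each adjacent transition $\{m,m+1\}$, and use $w_{m+1}=rw_m$ to get the constant ratio $2\sqrt r/(1+r)$ on every edge. Your added justifications are sound extra rigor that the paper leaves implicit: Tonelli for the regrouping, the AM--GM domination, and reading the numerator as $\|\tau_r^{1/4}Q\tau_r^{1/4}\|_{\mathrm{HS}}^2$.
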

\begin{proof}
Write $\lambda_m=(1-r)r^m$ and
$Q_{m,m+1}=\langle m|Q|m+1\rangle$.  Pairing the two directions of
each transition gives
\begin{align*}
 \operatorname{tr}(\tau_r^{1/2}Q\tau_r^{1/2}Q)
 &=\sum_{m\ge0}2\sqrt{\lambda_m\lambda_{m+1}}\,|Q_{m,m+1}|^2,\\
 \operatorname{tr}(\tau_rQ^2)
 &=\sum_{m\ge0}(\lambda_m+\lambda_{m+1})|Q_{m,m+1}|^2.
\end{align*}
Since $\lambda_{m+1}=r\lambda_m$, the ratio of the two weights is
$2\sqrt r/(1+r)$ on every transition.  The matrix coefficients cancel.
For $q$, finiteness follows from $|q_{m,m+1}|^2=(m+1)/2$.
\end{proof}

\begin{lemma}[Bosonic entropy and correlation]\label{lem:bosonic-statistics}
The construction of Definition~\ref{def:bosonic-construction} satisfies
\[
 \operatorname{Cost}=C_{\mathrm S}(A,\gamma),\qquad
 \operatorname{Corr}=S_{\mathrm S}(A,\gamma).
\]
In particular, the cost is the conditional entropy of the first factor
given the second in $\rho_{OAO^{\mathsf T}}$.
\end{lemma}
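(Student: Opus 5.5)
The plan is to reduce everything to independent one-mode thermal states by diagonalizing the Gaussian state. First I identify $\rho_A$ as a centered Gaussian state with quadrature covariance $\mathcal V_A=\tfrac12\operatorname{diag}(I_N,I_N+4A)$. The next step is a symplectic diagonalization. Write $M=I_N+4A$ and apply the symplectic map
\[
 (q,p)\mapsto (M^{1/4}O_Aq,\,M^{-1/4}O_Ap),
\]
with $O_A$ orthogonal diagonalizing $A$. This brings $\mathcal V_A$ to $\tfrac12\operatorname{diag}(\nu_j,\nu_j)$ with symplectic eigenvalues $\nu_j=\sqrt{1+4x_j}$. Each mode is then a thermal state $\tau_{r_j}$ with mean occupation
\[
 \bar n_j=(\nu_j-1)/2=(\sqrt{1+4x_j}-1)/2.
\]
Its entropy is $H_{\mathrm{Bose}}(\bar n_j)=f_{\mathrm S}(x_j)$, so
\[
 \operatorname{Ent}(\rho_A)=\operatorname{tr}f_{\mathrm S}(A).
\]
The reduced state after tracing out the $\gamma$-oscillator is $\rho_B$, as already noted in Definition~\ref{def:bosonic-construction}. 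The same computation gives $\operatorname{Ent}(\rho_B)=\operatorname{tr}f_{\mathrm S}(B)$, hence $\operatorname{Cost}=C_{\mathrm S}(A,\gamma)$. Since $\operatorname{tr}_{\mathcal F_1}\rho_{OAO^{\mathsf T}}=\rho_B$ and Gaussian unitaries preserve entropy, $\operatorname{Ent}(\rho_A)-\operatorname{Ent}(\rho_B)$ is exactly the conditional entropy $S(1|2)$ of $\rho_{OAO^{\mathsf T}}$.

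For the correlation, I expand $q_\gamma$ in the diagonalizing modes. The symplectic map sends $q=O_A^{\mathsf T}M^{-1/4}\tilde q$, so
\[
 q_\gamma=\sum_j \gamma'_j\nu_j^{-1/2}\tilde q_j,\qquad \gamma'=O_A\gamma.
\]
In the product thermal state, the bilinear forms $\operatorname{tr}(\rho^{1/2}\tilde q_j\rho^{1/2}\tilde q_k)$ and $\operatorname{tr}(\rho\,\tilde q_j\tilde q_k)$ vanish for $j\ne k$. This is because each $\tilde q_j$ changes the occupation of mode $j$ by one while $\rho$ is diagonal in the number basis, so cross terms have zero trace. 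Both numerator and denominator are therefore sums over modes. The denominator is
\[
 \operatorname{tr}(\rho_Aq_\gamma^2)=\sum_j\gamma_j'^2\nu_j^{-1}\cdot\tfrac{\nu_j}{2}=\tfrac12,
\]
which is just $\gamma^{\mathsf T}(\tfrac12 I)\gamma$, consistent with the $q$-block of $\mathcal V_A$. For the numerator I apply Proposition~\ref{prop:adjacent-degree-weights} mode by mode with $Q=\tilde q_j$ (tensored with identity on the other modes, which factor out). This gives
\[
 \operatorname{tr}(\tau^{1/2}\tilde q_j\tau^{1/2}\tilde q_j)=\frac{2\sqrt{r_j}}{1+r_j}\cdot\frac{\nu_j}{2}.
\]
With $r_j=\bar n_j/(\bar n_j+1)$ one has $\frac{2\sqrt{r_j}}{1+r_j}=\frac{2\sqrt{\bar n_j(\bar n_j+1)}}{2\bar n_j+1}=\frac{\sqrt{4x_j}}{\nu_j}$, since $4\bar n_j(\bar n_j+1)=\nu_j^2-1=4x_j$. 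The $j$-th contribution to the numerator is therefore $\gamma_j'^2\nu_j^{-1}\cdot\tfrac{\nu_j}{2}\cdot\frac{\sqrt{4x_j}}{\nu_j}=\tfrac12\gamma_j'^2\varphi_{\mathrm S}(x_j)$. Dividing by $\tfrac12$ gives $\operatorname{Corr}=\gamma^{\mathsf T}\varphi_{\mathrm S}(A)\gamma$.

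The main obstacle I expect is bookkeeping in the symplectic diagonalization. Because the $q$ and $p$ blocks differ, the normal-mode quadratures are rescaled, and one has to track correctly that $q_\gamma$ picks up the factor $\nu_j^{-1/2}$ rather than $\nu_j^{1/2}$. One also has to confirm that the Gaussian state is exactly the product of thermal states $\tau_{r_j}$ in the new Fock basis. A further point is justifying that the cross terms vanish in the $\rho^{1/2}$-weighted form. I would handle this by noting that $\rho_A^{1/2}$ is also diagonal in the product number basis. Finally, the degenerate case $x_j=0$ gives a vacuum mode, for which both formulas hold by continuity since $\varphi_{\mathrm S}(0)=0$.
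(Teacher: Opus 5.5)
Your proposal is correct and follows essentially the same route as the paper: the Gaussian entropy formula at the normal-mode occupations $(\sqrt{1+4x_j}-1)/2$ gives the cost, and unitary invariance gives the conditional-entropy reading. For the correlation, both arguments diagonalize $A$, dilate each mode to a thermal state $\tau_{r_j}$, apply Proposition~\ref{prop:adjacent-degree-weights}, and discard cross terms.

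The differences are cosmetic. You track the dilation factors $\nu_j^{-1/2}$ explicitly, which gives denominator $\tfrac12$, where the paper only notes that a dilation rescales both traces by the same factor. Your $\nu_j=\sqrt{1+4x_j}$ is the symplectic eigenvalue of $2\mathcal V_A$, not of $\mathcal V_A$, but this labeling is harmless because your occupations $(\nu_j-1)/2$ are correct.
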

\begin{proof}
The symplectic eigenvalues of $\mathcal V_A$ are
$\sqrt{1+4x_j}/2$, so its normal-mode occupations are
$a_j=(\sqrt{1+4x_j}-1)/2$.  The Gaussian entropy formula
\cite[Section~II.C]{Weedbrook2012} gives
$\operatorname{Ent}(\rho_A)=\sum_jH_{\mathrm{Bose}}(a_j)$.
The same argument for $B$ gives
$\operatorname{Ent}(\rho_B)=\operatorname{tr}f_{\mathrm S}(B)$,
proving the cost identity.  Unitary rotation preserves entropy, giving
the conditional entropy interpretation.

For the correlation, diagonalize $A$.  Each mode is conjugate by a
unitary dilation to $\tau_{r_j}$, where
\[
 \nu_j=\frac{\sqrt{1+4x_j}}2,\qquad
 r_j=\frac{2\nu_j-1}{2\nu_j+1}.
\]
Dilation rescales $q$, multiplying both traces by the same factor.
Since $q$ raises or lowers degree by one,
Proposition~\ref{prop:adjacent-degree-weights} gives, for $x_j>0$, their ratio as
$2\sqrt{r_j}/(1+r_j)=\sqrt{1-(2\nu_j)^{-2}}$.
In our normalization every $q_j$ has variance $1/2$, and distinct
modes have zero cross terms in both expressions.  Thus
\[
 \operatorname{Corr}=\sum_j\gamma_j^2
 \sqrt{\frac{4x_j}{1+4x_j}}
 =\gamma^{\mathsf T}\varphi_{\mathrm S}(A)\gamma.
\]
Vacuum modes contribute zero to the numerator, so the argument also
covers zero eigenvalues.
\end{proof}

\begin{theorem}[$N$-mode spherical bound]\label{thm:spherical-matrix-upper}
For the bosonic construction above and $-1<s<1$, if
$s<\operatorname{Corr}$, then
\[
 \overline R_D^{\mathbb S}(s)\le\operatorname{Cost}
 =C_{\mathrm S}(A,\gamma).
\]
\end{theorem}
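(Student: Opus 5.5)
The plan is to build, for each large $n$, an explicit feasible auxiliary function for the spherical Delsarte program out of the bosonic construction, and then read its value $f(1)$ and its sign condition off the cost and correlation of Lemma~\ref{lem:bosonic-statistics}. The bound is a rate statement at fixed $N$ as $n\to\infty$, so the natural template is a tensor-power (i.i.d.\ blocks) construction. We take $n$ copies, essentially, of the $N$-mode Gaussian state $\rho_A$ together with the observable $q_\gamma$.

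\textbf{Step 1: the kernel.} We realize points of $S^{n-1}$ through the linear observable. Put the $n$-fold copy of $\mathcal F_N$ in the form $\mathcal F_1^{\otimes n}\otimes\mathcal F_{N-1}^{\otimes n}$, where the first factor is the $\gamma$-oscillator after the rotation by $O$. A unit vector $x\in S^{n-1}$ gives the collective quadrature $q_\gamma(x)=\sum_i x_iq_\gamma^{(i)}$. This is a single oscillator, rotated by the $O(n)$-action on the $n$ copies of the first factor. Under this action $\rho_A^{\otimes n}$ is invariant, because rotating the $n$ copies of $\mathcal F_N$ preserves the Gaussian mixture.

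We then define $f(\langle x,y\rangle)$ as a normalized Hilbert--Schmidt-type overlap between conditional objects attached to the directions $x$ and $y$. The model is the $N$-qubit construction of \cite[Section~4.2]{Salmon2026HalfRate}: there the kernel is $\operatorname{tr}(\sigma_x\sigma_y)$-like, with $\sigma_x$ the state conditioned on the $x$-oscillator. Such a kernel is automatically positive semidefinite and $O(n)$-invariant, so Bochner gives $a_j\ge0$. Its constant coefficient is the inner product of the averaged objects, which we normalize to $1$.

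\textbf{Step 2: the two estimates.}

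The diagonal value $f(1)$ is a collision quantity. By Rényi-to-von Neumann concentration, or more precisely typicality for the i.i.d.\ Gaussian blocks, it equals $e^{n(\operatorname{Ent}(\rho_A)-\operatorname{Ent}(\rho_B))+o(n)}$. This is the conditional entropy of the removed mode given the rest, i.e.\ $C_{\mathrm S}(A,\gamma)$, once the kernel is truncated to a typical subspace.

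The sign condition $f(t)\le0$ for $t\le s$ should come from a correlation term. Write $f(t)=\Phi(t)-\epsilon\cdot(\text{const})$. Here $\Phi(t)$ behaves like $e^{n\,\phi(t)}$, with $\phi$ increasing in $t$, and the threshold where the exponent changes sign is governed by the second-moment ratio $\operatorname{Corr}$ of Proposition~\ref{prop:adjacent-degree-weights}. The natural choice is to subtract a multiple of the constant polynomial. Alternatively, we compose with a projection onto an eigenspace of $q_\gamma(x)$ near its typical value, so that the overlap for $\langle x,y\rangle=t$ behaves like a Gaussian large deviation in $t-\operatorname{Corr}$. This makes $f$ negative below $\operatorname{Corr}-o(1)$, hence below $s$.

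\textbf{Step 3: the limit and the main obstacle.} Taking $n\to\infty$ at fixed $N$ gives $\frac1n\ln D_{\mathbb S}(n,s)\le C_{\mathrm S}(A,\gamma)+o(1)$, which is the claim for $\overline R_D^{\mathbb S}(s)$. The one-sided inequality comes from the strict inequality $s<\operatorname{Corr}$.

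The hardest step is Step~2's sign condition. We need $f(t)\le0$ on all of $[-1,s]$, not only near the typical overlap, while retaining positive definiteness. If the direct overlap kernel is not negative there, the fallback is the standard Delsarte trick of taking $f=g\cdot(t-s)$-type corrections. These are justified by the three-term recurrence for Gegenbauer polynomials, which is the analogue of \cite[Theorem~2.20]{Salmon2026HalfRate}. That route requires controlling the tail $t\to-1$ using the Gaussian tails of $\rho_A$.
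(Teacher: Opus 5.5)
There is a genuine gap: you never define a feasible auxiliary function, and the two properties it needs are asserted rather than derived.

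\textbf{The diagonal value.} In Step~1 the kernel is a ``Hilbert--Schmidt-type overlap between conditional objects'' that is never specified. The only datum you attach to $x\in S^{n-1}$ is one collective oscillator $q_\gamma(x)$. By the $O(n)$-invariance you invoke, tracing out that mode changes the entropy of $\rho_A^{\otimes n}$ by exactly $\operatorname{Ent}(\rho_A)-\operatorname{Ent}(\rho_B)$, independent of $n$. The exponent $n(\operatorname{Ent}(\rho_A)-\operatorname{Ent}(\rho_B))$ you assign to $f(1)$ in Step~2 is instead the entropy drop from tracing out \emph{all} $n$ copies of the $\gamma$-mode, which does not depend on $x$. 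Nothing in the construction explains why the diagonal of your kernel should grow at exactly this rate. Moreover, a collision quantity is a R\'enyi-2 object. Passing to von Neumann entropy by typical-subspace truncation would require checking that the truncated kernel stays $O(n)$-invariant and positive definite with $a_0=1$, and you do not address this.

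\textbf{The sign condition.} The more serious gap is $f\le0$ on $[-1,s]$, which is the whole content of a Delsarte bound; you leave it at ``should come from a correlation term.'' Your fallback fails as stated. If $g\ge0$ is positive definite, then $(t-s)g$ does have the right sign. But the three-term recurrence only makes $t\,g(t)$ positive definite, and $-s\,g$ has nonpositive coefficients for $s>0$. So positive definiteness, even of the constant term, can be lost. Controlling this is exactly the hard part of Levenshtein-type constructions. In effect, Step~2 would need a spherical analogue of the pretty good criterion, which your sketch does not supply.

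\textbf{The missing ingredient.} The paper does not re-derive this step; it imports it.
\begin{itemize}
\item It uses the moving-subspace bound \cite[Chapter~2, Theorem~6.2]{OpenAI2026}: $D_{\mathbb S}(n,s)\le(1-s)L_n/(\lambda_{\max}(J_n)-s)$ whenever $\lambda_{\max}(J_n)>\max\{s,0\}$.
\item It uses the highest-weight realization of that bound. For $A\succ0$ with strictly interlacing spectra, this gives $\frac1n\ln L_n\to\sum_iH_{\mathrm{Bose}}(a_i)-\sum_jH_{\mathrm{Bose}}(b_j)$. It also gives $\liminf_n\lambda_{\max}(J_n)\ge\sum_iw_i\varphi_{\mathrm S}(x_i)$, where $w_i=\prod_j(x_i-y_j)/\prod_{k\ne i}(x_i-x_k)$.
\item The cofactor identity $\gamma^{\mathsf T}(zI-A)^{-1}\gamma=\det(zI-B)/\det(zI-A)$ gives $w_i=\|P_{x_i}\gamma\|^2$. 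Lemma~\ref{lem:bosonic-statistics} then identifies the two limits with $\operatorname{Cost}$ and $\operatorname{Corr}$.
\item Since $\operatorname{Corr}>\max\{s,0\}$, the denominator stays bounded away from zero.
\item A density argument removes the positivity and genericity assumptions on $(A,\gamma)$.
\end{itemize}
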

\begin{proof}
The moving-subspace construction of~\cite[Chapter~2, Theorem~6.2]{OpenAI2026} bounds the Delsarte linear program.
In the language there, let $J_n$ be the symmetric transition matrix of the construction, and let
$L_n$ be the dimension of the ambient space divided by the rank of the
projection.  The resulting Delsarte bound is
\begin{equation}\label{eq:spherical-moving-subspace-bound}
 D_{\mathbb S}(n,s)\le
 \frac{(1-s)L_n}{\lambda_{\max}(J_n)-s},
 \qquad \lambda_{\max}(J_n)>\max\{s,0\}.
\end{equation}
Thus it suffices to realize a limiting transition eigenvalue at least
$\operatorname{Corr}$ and a rate $\lim_n n^{-1} \ln L_n$ equal to
$\operatorname{Cost}$.

Initially assume $A\succ0$ and strictly interlacing spectra.
Put $a_i=(\sqrt{1+4x_i}-1)/2$ and
$b_j=(\sqrt{1+4y_j}-1)/2$.
The highest-weight construction in~\cite[Chapter~2, Lemmas~7.1--7.2,
Equations~(87)--(88), and Lemma~A.1]{OpenAI2026} supplies $J_n,L_n$ with
\begin{align*}
 \liminf_n\lambda_{\max}(J_n)
 &\ge\sum_i w_i\sqrt{\frac{4x_i}{1+4x_i}},&
 w_i&=\frac{\prod_j(x_i-y_j)}{\prod_{k\ne i}(x_i-x_k)},\\
 \lim_n\frac1n\ln L_n
 &=\sum_iH_{\mathrm{Bose}}(a_i)-\sum_jH_{\mathrm{Bose}}(b_j)
 =\operatorname{Cost}.
\end{align*}
The cofactor identity
\[
 \gamma^{\mathsf T}(zI-A)^{-1}\gamma
 =\frac{\det(zI-B)}{\det(zI-A)}
\]
identifies the residues $w_i$ with $\|P_{x_i}\gamma\|^2$.
By Lemma~\ref{lem:bosonic-statistics}, the transition eigenvalue therefore
has lower limit at least $\operatorname{Corr}$.
Since $A\succ0$, we have $\operatorname{Corr}>0$, and hence
\[
 \liminf_n\lambda_{\max}(J_n)
 \ge\operatorname{Corr}>\max\{s,0\}.
\]
Thus~\eqref{eq:spherical-moving-subspace-bound} applies for all sufficiently
large $n$, with its denominator bounded away from zero.
Taking logarithms and dividing by $n$ leaves only the dimension-ratio
exponent $\operatorname{Cost}$.

The case of general $A\succeq0$ and arbitrary $\gamma$ follows by a density argument: approximate $A$ by positive definite matrices with
simple spectrum and $\gamma$ by unit vectors having nonzero component
in every eigenspace.
\end{proof}

The construction for binary codes uses fermionic Fock space via the $N$-qubit construction, introduced in the case of half rate in~\cite[Section~4.2]{Salmon2026HalfRate}.  We give the general construction below.

We recall Alrabiah and Guruswami's pretty good criterion~\cite[Theorem~1]{AG2026}, which gives upper bounds for binary codes.
The Delsarte version is established
in~\cite[Section~4.1, Equation~(4.18), and Theorem~2.20]{Salmon2026HalfRate}.

\begin{proposition}[Pretty good criterion for the Delsarte bound]\label{prop:pgm-delsarte}
Let $\sigma_+,\sigma_-$ be density matrices on a finite-dimensional Hilbert
space, regarded as equiprobable channel outputs.  Put
\[
 \varrho=\tfrac12(\sigma_++\sigma_-),\qquad
 M_\pm=\tfrac12\varrho^{-1/2}\sigma_\pm\varrho^{-1/2},
\]
where inverses are taken on $\operatorname{supp}\varrho$.
The Holevo information in bits and the error probability of the pretty
good measurement $(M_+,M_-)$ are
\[
\begin{aligned}
 \chi&=\frac{\operatorname{Ent}(\varrho)
 -\tfrac12\operatorname{Ent}(\sigma_+)
 -\tfrac12\operatorname{Ent}(\sigma_-)}{\ln2},\\
 p_{\mathrm e}&=\tfrac12\operatorname{tr}(\sigma_+M_-)
              +\tfrac12\operatorname{tr}(\sigma_-M_+).
\end{aligned}
\]
For $0<\delta<\frac12$, if $p_{\mathrm e}<\delta$, then
\[
 \overline R_D(\delta)\le\chi.
\]
\end{proposition}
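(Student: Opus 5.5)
The plan is to deduce the proposition from the Delsarte version of the Alrabiah--Guruswami argument in \cite{Salmon2026HalfRate}, so that no new analysis is needed. Three steps carry it out. First, reduce to an output-symmetric channel. Second, check that the resulting pair of states is one of the normalized Horn pairs of \cite[Theorem~2.20]{Salmon2026HalfRate}, with Holevo information $\chi$ and pretty-good error $p_{\mathrm e}$. Third, invoke that theorem, which realizes such a pair asymptotically as feasible solutions of the binary Delsarte program at relative distance $\delta$ with rate $\chi$.

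\emph{Symmetrization.} Replace $(\sigma_+,\sigma_-)$ by the pair on $\mathbb C^2\otimes\mathcal H$
\[
\tilde\sigma_\pm=\tfrac12\bigl(|0\rangle\langle0|\otimes\sigma_\pm+|1\rangle\langle1|\otimes\sigma_\mp\bigr).
\]
The unitary $X\otimes I$ exchanges these two states, so the new pair is output-symmetric. The average $\tilde\varrho=\tfrac12 I\otimes\varrho$ is block diagonal, and each block is the original problem with the roles of $\sigma_\pm$ possibly swapped. Hence the pretty-good measurement acts blockwise, and the error probability is again $p_{\mathrm e}$. For the entropies, $\operatorname{Ent}(\tilde\varrho)=\ln2+\operatorname{Ent}(\varrho)$ and $\operatorname{Ent}(\tilde\sigma_\pm)=\ln2+\tfrac12\operatorname{Ent}(\sigma_+)+\tfrac12\operatorname{Ent}(\sigma_-)$. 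The $\ln2$ terms cancel, so $\chi$ is unchanged. After this step we may assume output symmetry, which is the setting of \cite[Section~4.1]{Salmon2026HalfRate}.

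\emph{Identification with a Horn pair.} For an output-symmetric pair I will use the pretty-good Gram quantities
\[
\operatorname{tr}(\sigma_\pm M_\pm)=1-p_{\mathrm e},\qquad \operatorname{tr}(\sigma_\mp M_\pm)=p_{\mathrm e}.
\]
These follow from $M_++M_-=\Pi_{\operatorname{supp}\varrho}$ together with the exchange symmetry. Equation~(4.18) of \cite{Salmon2026HalfRate} packages them, together with $\operatorname{Ent}(\varrho)$ and $\operatorname{Ent}(\sigma_\pm)$, into a normalized Horn pair. Its entropy parameter is $\chi\ln2$ and its distance parameter is $p_{\mathrm e}$. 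Concretely, tensor powers $\sigma_x=\bigotimes_i\sigma_{x_i}$ give a positive kernel on $\{\pm\}^n$ that depends only on Hamming distance. Projecting onto the typical subspace of $\varrho^{\otimes n}$, of dimension $e^{n\operatorname{Ent}(\varrho)+o(n)}$, produces an auxiliary function. Its value at $t=1$ is governed by $e^{n\chi\ln2}$, and by a Chernoff bound its mass concentrates on relative distance near $p_{\mathrm e}$.

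\emph{Conclusion and main obstacle.} Theorem~2.20 of \cite{Salmon2026HalfRate} then gives feasible Delsarte functions $f_n$ with $\limsup_n\frac1n\log_2 f_n(1)\le\chi$ at every relative distance strictly larger than $p_{\mathrm e}$. The strict inequality $p_{\mathrm e}<\delta$ provides the slack needed to absorb both the $o(n)$ typicality errors and the rounding $d=\lceil\delta n\rceil$. This yields $\overline R_D(\delta)\le\chi$. I expect the main point to check to be the sign condition. The function built from the kernel must be nonpositive on all of $t\le1-2\delta$, not merely small there. This is exactly where the typical-subspace truncation and the gap $\delta-p_{\mathrm e}>0$ are used in Theorem~2.20. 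Rather than redo it, I would verify carefully that the hypotheses of that theorem are met after the symmetrization step, including the case where $\varrho$ is not of full rank, where inverses are taken on $\operatorname{supp}\varrho$.
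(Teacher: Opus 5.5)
Your proposal is correct and follows the paper, which likewise obtains this proposition purely by citing \cite[Section~4.1, Equation~(4.18), and Theorem~2.20]{Salmon2026HalfRate}: a pair with Holevo information $\chi$ and pretty-good error $p_{\mathrm e}$ is a normalized Horn pair, and such pairs are realized asymptotically by feasible Delsarte functions. Your symmetrization $\tilde\sigma_\pm=\tfrac12\bigl(|0\rangle\langle0|\otimes\sigma_\pm+|1\rangle\langle1|\otimes\sigma_\mp\bigr)$ is a correct addition that the paper omits: it preserves both $\chi$ and $p_{\mathrm e}$, so it removes any output-symmetry hypothesis in the cited setting, although the paper only applies the proposition to the parity-exchanged, output-symmetric pair of Theorem~\ref{thm:horn-upper}.
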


\begin{theorem}[$N$-qubit projection bound]\label{thm:horn-upper}
For $0\preceq A\preceq I$ and $0<\delta<\frac12$, if
$1-2\delta<S_{\mathrm B}(A,\gamma)$, then
\begin{equation}\label{eq:horn-rate-upper}
 \overline R_D(\delta)\le\frac{C_{\mathrm B}(A,\gamma)}{\ln2}.
\end{equation}
\end{theorem}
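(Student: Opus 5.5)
We will realize the pair $(A,\gamma)$ as a fermionic (quasi-free) $N$-qubit channel and then apply Proposition~\ref{prop:pgm-delsarte}. This mirrors the bosonic argument of Theorem~\ref{thm:spherical-matrix-upper}, with the moving-subspace bound replaced by the pretty good criterion.

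\textbf{Step 1: the states.} On fermionic Fock space $\bigwedge\mathbb C^N\cong(\mathbb C^2)^{\otimes N}$, use Majorana operators, realized by Jordan--Wigner. Let $\varrho$ be the centered Gaussian (quasi-free) state whose one-particle correlation operator is determined by $A$. For each eigenvalue $x_j$ we want an occupation $(1-\sqrt{x_j})/2$, so that
\[
 \operatorname{Ent}(\varrho)=\sum_j H_{\mathrm{bin}}\bigl((1-\sqrt{x_j})/2\bigr)=\operatorname{tr}f_{\mathrm B}(A).
\]
Let $c_\gamma=\sum_j\gamma_j c_j$ be the Majorana mode in direction $\gamma$, which is self-adjoint with square $I$. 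Define
\[
 \sigma_\pm=\varrho^{1/2}\,\tfrac12(I\pm P)\,\varrho^{1/2}\cdot 2,
\]
or equivalently split $\varrho$ along the parity of the $\gamma$-mode. The requirements are that $\tfrac12(\sigma_++\sigma_-)=\varrho$ and that the two states are exchanged by the unitary $c_\gamma$ (output symmetry). This is the construction of \cite[Section~4.2]{Salmon2026HalfRate}, generalized from the arcsine spectrum to an arbitrary $A$.

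\textbf{Step 2: Holevo information.} Rotate $\gamma$ to the first mode, as was done with $O$ in the bosonic case. The compression $B=U^{\mathsf T}AU$ then gives the reduced quasi-free state on the remaining $N-1$ modes. Each $\sigma_\pm$ should be unitarily equivalent to a pure qubit on the $\gamma$-mode tensored with (a purification-compatible copy of) $\rho_B$, so that $\operatorname{Ent}(\sigma_\pm)=\operatorname{tr}f_{\mathrm B}(B)$. Hence
\[
 \chi\ln2=\operatorname{tr}f_{\mathrm B}(A)-\operatorname{tr}f_{\mathrm B}(B)=C_{\mathrm B}(A,\gamma).
\]
To check this, I would compute the entropies through the Gaussian formula from the eigenvalues of the correlation matrices. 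Cauchy interlacing guarantees the compressed matrix has the right spectrum.

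\textbf{Step 3: error probability.} With equal priors we have $M_\pm=\tfrac12\varrho^{-1/2}\sigma_\pm\varrho^{-1/2}$, and
\[
 1-2p_{\mathrm e}=\operatorname{tr}\bigl(\varrho^{1/2}Q\varrho^{1/2}Q\bigr)/\operatorname{tr}(\varrho Q^2)
\]
for the observable $Q$ implementing the $\pm$ split, which is the analogue of $\operatorname{Corr}$. Diagonalize $A$. Each mode then contributes a two-level thermal factor with weights $p,1-p$, and $Q$ connects the two levels, just as in Proposition~\ref{prop:adjacent-degree-weights}. That proposition's computation gives per mode
\[
 2\sqrt{p(1-p)}=\sqrt{1-x_j}=\varphi_{\mathrm B}(x_j).
\]
Summing with weights $\gamma_j^2$ gives $1-2p_{\mathrm e}=S_{\mathrm B}(A,\gamma)$. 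Consequently the hypothesis $1-2\delta<S_{\mathrm B}$ is exactly $p_{\mathrm e}<\delta$, and Proposition~\ref{prop:pgm-delsarte} yields \eqref{eq:horn-rate-upper}.

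\textbf{Main obstacle and remaining cases.} The hardest step is Step 2. The difficulty is making the states genuinely finite-dimensional and output-symmetric while keeping $\operatorname{Ent}(\sigma_\pm)$ equal to the entropy of the compressed Gaussian state. For this I would first try the Jordan--Wigner parity split of \cite[Section~4.2]{Salmon2026HalfRate} and verify the reduced covariance directly. Degenerate spectra, or eigenvalues equal to $0$ or $1$, would be handled by the same density argument as in Theorem~\ref{thm:spherical-matrix-upper}. This works because $C_{\mathrm B}$ and $S_{\mathrm B}$ are continuous and the hypothesis is a strict inequality.
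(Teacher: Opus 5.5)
Your Step 3 is fine and agrees with the paper: cross terms between modes vanish and each eigenmode contributes $\sqrt{1-x_j}$. The genuine gap is Step 2, which you flag yourself and which is the substance of the theorem. Step 1 also misidentifies the symmetry.

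\emph{Step 2.} The value $\operatorname{Ent}(\sigma_\pm)=\operatorname{tr}f_{\mathrm B}(B)$ is correct, but nothing in your proposal proves it.

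\begin{itemize}
\item The $\sigma_\pm$ are not Gaussian, since they contain the odd term $\varrho^{1/2}c_\gamma\varrho^{1/2}$. So the Gaussian entropy formula cannot be applied to them directly.
\item Your bosonic-style substitute does not produce $B$. If you rotate both Majorana families by the same $O$ and discard the first mode, the remaining quasi-free state has coupling $U^{\mathsf T}A^{1/2}U$. In general $(U^{\mathsf T}A^{1/2}U)^2\neq U^{\mathsf T}AU$.
\item Even a correctly chosen reduced state would still need an argument linking it to $\sigma_\pm$.
\end{itemize}

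The paper supplies two ingredients you are missing.

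\begin{enumerate}
\item $\sigma_\pm=2X_\pm^*X_\pm$ with $X_\pm=\Pi_\pm\varrho^{1/2}$. Hence $\sigma_\pm$ has the same nonzero spectrum as $2X_\pm X_\pm^*=2\Pi_\pm\varrho\Pi_\pm$.
\item The pinched state $\varrho_e=\Pi_+\varrho\Pi_++\Pi_-\varrho\Pi_-=\tfrac12(\varrho+c_\gamma\varrho c_\gamma)$ is again Gaussian. Conjugation by $c_\gamma$ negates every Majorana orthogonal to it. So, in an orthonormal Majorana basis containing $c_\gamma$, averaging kills exactly the nonvanishing moments that involve $c_\gamma$. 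By Wick's theorem, $\varrho_e$ is the Gaussian state whose covariance has the $c_\gamma$ row and column set to zero.
\end{enumerate}

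In the eigenbasis of $A$, with $D=\operatorname{diag}(\sqrt{x_j})$, the coupling matrix of $\varrho_e$ is $L=(I-\gamma\gamma^{\mathsf T})D$. Since $LL^{\mathsf T}$ kills $\gamma$ and restricts to $B$ on $\gamma^\perp$, the singular values of $L$ are $\sqrt{y_1},\ldots,\sqrt{y_{N-1}},0$. This gives $\operatorname{Ent}(\varrho_e)=\ln2+\operatorname{tr}f_{\mathrm B}(B)$.

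On the other hand, $\varrho_e$ splits into two orthogonal blocks of trace $\tfrac12$, so $\operatorname{Ent}(\varrho_e)=\ln2+\tfrac12(\operatorname{Ent}\sigma_++\operatorname{Ent}\sigma_-)$. Subtracting from $\operatorname{Ent}(\varrho)=\operatorname{tr}f_{\mathrm B}(A)$ yields $\chi\ln2=C_{\mathrm B}(A,\gamma)$.

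The zero singular value is a maximally mixed mode of $\varrho_e$, which $\Pi_\pm$ projects to a pure qubit. So your ``pure qubit $\otimes\rho_B$'' picture is right, but it is this pinching computation, not Cauchy interlacing, that proves it.

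\emph{Step 1.} For Step 3 to apply, the split must be along the Majorana itself, $\Pi_\pm=\tfrac12(I\pm c_\gamma)$; this is the paper's $J_\gamma=\sum_j\gamma_ja_j$. It cannot be along a mode parity. For instance, with the paper's $\varrho=2^{-N}\prod_j(I-\mathrm i\sqrt{x_j}\,a_jb_j)$, the operator $P=-\mathrm i\,c_\gamma\sum_j\gamma_jb_j$ has $\operatorname{tr}(\varrho P)=\gamma^{\mathsf T}D\gamma$. This is nonzero in general, so the $\sigma_\pm$ would not even have trace one.

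With $P=c_\gamma$, however, $c_\gamma$ commutes with $\Pi_\pm$ but not with $\varrho$. So it does not exchange $\sigma_+$ and $\sigma_-$. The exchanging unitary is the global fermionic parity, which commutes with the even state $\varrho$ and anticommutes with $c_\gamma$.

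Evenness of $\varrho$ also gives $\operatorname{tr}(\varrho c_\gamma)=0$, hence $\operatorname{tr}\sigma_\pm=1$. Invertibility of $\varrho$ when all $x_j<1$ gives $M_\pm=\Pi_\pm$, which your formula for $1-2p_{\mathrm e}$ uses implicitly.
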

\begin{proof}
By Proposition~\ref{prop:pgm-delsarte}, it suffices to construct equiprobable
output states with Holevo information and decoding error
$\chi = C_{\mathrm B}(A,\gamma) / \ln 2$ and $p_{\mathrm e} = \frac{1 - S_{\mathrm B}(A,\gamma)}{2}$.

Diagonalize $A=\operatorname{diag}(x_1,\ldots,x_N)$ and initially assume
$x_j<1$.  Set $\eta_j=\sqrt{x_j}$ and $D=\operatorname{diag}(\eta_j)$.
Define the state, involution,
and projections of the $N$-qubit construction as
\[
 \varrho=2^{-N}\prod_j(I-\mathrm i\eta_ja_jb_j),\qquad
 J_\gamma=\sum_j\gamma_ja_j,\qquad
 \Pi_\pm=(I\pm J_\gamma)/2.
\]
Here $a_j,b_j$ are self-adjoint Clifford generators on
$(\mathbb C^2)^{\otimes N}$, with
$\{a_j,a_k\}=\{b_j,b_k\}=2\delta_{jk}I$ and $\{a_j,b_k\}=0$.
Define $\sigma_\pm=2\varrho^{1/2}\Pi_\pm\varrho^{1/2}$.
Parity exchanges $\sigma_+$ and $\sigma_-$, so
$\sigma_\pm$ are output-symmetric states with average $\varrho$.
Their pretty good measurement is $(\Pi_+,\Pi_-)$.
\cite[Theorem~4.5, Equation~(4.36)]{Salmon2026HalfRate}
gives
\[
 1-2p_{\mathrm e}
 =\operatorname{tr}(J_\gamma\varrho^{1/2}J_\gamma\varrho^{1/2})
 =\gamma^{\mathsf T}(I-D^2)^{1/2}\gamma
 =S_{\mathrm B}(A,\gamma).
\]

It remains to identify the Holevo information.  The product state has
$\operatorname{Ent}(\varrho)=\sum_jf_{\mathrm B}(x_j)$.
Put $L=(I-\gamma\gamma^{\mathsf T})D$ and
$\varrho_e=(\varrho+J_\gamma\varrho J_\gamma)/2$.
The identity for the part invariant under conjugation by
$J_\gamma$~\cite[Equation~(4.40)]{Salmon2026HalfRate}
says that $\varrho_e$ is the fermionic Gaussian state with coupling matrix $L$.
The singular values of $L$ are $\sqrt{y_1},\ldots,\sqrt{y_{N-1}}, 0$, counting multiplicities,
since $LL^{\mathsf T}$ restricts to $B$ on $\gamma^\perp$.
The Gaussian entropy formula therefore gives
\[
 \operatorname{Ent}(\varrho_e)
 =f_{\mathrm B}(0)+\sum_{j=1}^{N-1}f_{\mathrm B}(y_j)
 =\ln2+\sum_{j=1}^{N-1}f_{\mathrm B}(y_j).
\]
On the other hand,
\[
 \varrho_e=\Pi_+\varrho\Pi_++\Pi_-\varrho\Pi_-.
\]
These two orthogonal blocks each have trace $1/2$, and
$2\Pi_\pm\varrho\Pi_\pm$ have the same nonzero spectra as
$\sigma_\pm$.  Hence
\[
 \operatorname{Ent}(\varrho_e)
 =\ln2+\tfrac12\operatorname{Ent}(\sigma_+)
       +\tfrac12\operatorname{Ent}(\sigma_-).
\]
It follows that
\[
 \chi=\frac{\operatorname{Ent}(\varrho)
 -\tfrac12\operatorname{Ent}(\sigma_+)
 -\tfrac12\operatorname{Ent}(\sigma_-)}{\ln2}
 =\frac{C_{\mathrm B}(A,\gamma)}{\ln2}.
\]
The bound now follows from the pretty good criterion.
\end{proof}

There are several clear parallels between the above constructions.  They begin with a moving-subspace construction, pass to Gaussian quantum information theory, and end at interlacing spectra.  In the following subsection, we will explore a variational problem over measures that arises from interlacing spectra via the Markov--Krein correspondence.

\subsection{The Markov--Krein correspondence and the measure problem}

We now introduce ideas of Kerov~\cite[Section~2]{Kerov2000} to pass from
interlacing spectra to probability measures.

For a Borel subset $I\subseteq\mathbb R$, write $\mathcal P(I)$ for the
space of Borel probability measures on $I$.  The spectral measure of $A$ at $\gamma$ is
$\mathfrak m=\sum_jw_j\delta_{x_j}$, where
$w_j=\|P_{x_j}\gamma\|^2$ and $P_{x_j}$ is the spectral projection.
The sum here runs over distinct eigenvalues.  This is a probability
measure.  Its Cauchy transform satisfies
\begin{equation}\label{eq:upper-markov-krein}
 G_{\mathfrak m}(z)=\int\frac{d\mathfrak m(x)}{z-x}
 =\gamma^{\mathsf T}(zI-A)^{-1}\gamma
 =\frac{\prod_k(z-y_k)}{\prod_j(z-x_j)}.
\end{equation}
The products count eigenvalues with multiplicity.  To see the last equality,
choose an orthonormal basis whose first vector is $\gamma$; the corresponding
diagonal cofactor of $zI-A$ is $\det(zI-B)$.

After canceling common eigenvalues, the remaining spectra strictly interlace.
Partial fractions give the weights explicitly:
\[
 w_j=\frac{\prod_k(x_j-y_k)}{\prod_{i\ne j}(x_j-x_i)}.
\]
Interlacing makes these weights positive, and
$G_{\mathfrak m}(z)\sim1/z$ gives $\sum_jw_j=1$.
Conversely, the Cauchy transform of a finite atomic probability measure
has exactly one zero between each pair of consecutive atoms: it is
strictly decreasing there, from $+\infty$ to $-\infty$.
Its poles and zeros therefore recover the interlacing pair.
The choice $A=\operatorname{diag}(x_j)$ and $\gamma_j=\sqrt{w_j}$
realizes this pair by matrices.

This bijection between strictly interlacing pairs and finite atomic
probability measures is the discrete Markov--Krein correspondence.  The measure is called
the transition measure of the interlacing pair; in our matrix realization,
it is exactly the spectral measure of $A$ at $\gamma$.

With this correspondence in mind, we recast the optimization problem in terms of measures.  Define
\[
 g_{\mathfrak m}(u)=\int\frac{d\mathfrak m(x)}{1-u^2x},\qquad
 T_{\mathfrak m}(r)=\int\frac{\operatorname{sech}^2r}
 {\operatorname{sech}^2r+4x}\,d\mathfrak m(x).
\]
Here $0\le u<1$, $r\ge0$, and $\operatorname{sech}r=1/\cosh r$.
The measure costs and correlations are
\begin{align}
 C_{\mathrm B}(\mathfrak m)&=\ln2-\frac12\int_0^1
                  \frac{\ln g_{\mathfrak m}(u)}{u^2}\,du,
 &S_{\mathrm B}(\mathfrak m)&=\int\sqrt{1-x}\,d\mathfrak m(x),
 \label{eq:binary-measure-statistics}\\
 C_{\mathrm S}(\mathfrak m)&=-\frac12\int_0^\infty
             \operatorname{sech}^2r\ln T_{\mathfrak m}(r)\,dr,
 &S_{\mathrm S}(\mathfrak m)&=\int\sqrt{\frac{4x}{1+4x}}\,d\mathfrak m(x).
 \label{eq:spherical-measure-statistics}
\end{align}
These agree with \eqref{eq:upper-matrix-statistics} for finite atomic
measures.  To see the cost identities, use the scalar entropy
representations
\[
 f_{\mathrm B}(x)=\ln2+\frac12\int_0^1
       \frac{\ln(1-u^2x)}{u^2}\,du,\qquad
 f_{\mathrm S}(x)=\frac12\int_0^\infty
       \operatorname{sech}^2r\ln(1+4x\cosh^2r)\,dr.
\]
Each identity follows by differentiation in $x$ and its value at $x=0$;
continuity gives the endpoints.  The determinant identity
\eqref{eq:upper-markov-krein} gives
\[
 g_{\mathfrak m}(u)=\frac{\det(I-u^2B)}{\det(I-u^2A)},\qquad
 T_{\mathfrak m}(r)=
 \frac{\det(I+4\cosh^2r B)}{\det(I+4\cosh^2r A)}.
\]
Subtracting the scalar entropy integrals proves the cost identities.

\begin{definition}[Optimization over measures]
\label{def:binary-measure-variational}\label{def:spherical-measure-variational}
For $\star\in\{\mathrm B,\mathrm S\}$ and $0<s<1$, put
\begin{equation}\label{eq:upper-unified-measure-variational}
 \mathcal U_\star(s)=\inf\{C_\star(\mathfrak m):
 \mathfrak m\in\mathcal P(I_\star),
 \ S_\star(\mathfrak m)\ge s\}.
\end{equation}
\end{definition}
Both are convex problems: correlation is linear in the measure, and cost
is a convex functional.

\begin{lemma}[Weak continuity]\label{lem:upper-continuity}
Both $S_\star$ and $C_\star$ are continuous for weak (narrow) convergence on
$\mathcal P(I_\star)$.
\end{lemma}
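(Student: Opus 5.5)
The plan is to treat the two correlation functionals first and the two cost functionals afterwards. Each is written as the integral of a bounded continuous kernel against $\mathfrak m$, or as a limit of such integrals. Throughout we use the portmanteau characterization of narrow convergence on $\mathcal P(I_\star)$: $\int F\,d\mathfrak m_k\to\int F\,d\mathfrak m$ for every bounded continuous $F$ on $I_\star$.

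\emph{Correlations.} On $I_{\mathrm B}=[0,1]$ the function $\sqrt{1-x}$ is continuous and bounded. On $I_{\mathrm S}=[0,\infty)$ the function $\sqrt{4x/(1+4x)}$ is continuous with values in $[0,1)$. Hence $S_\star(\mathfrak m)=\int\varphi_\star\,d\mathfrak m$ is narrowly continuous directly from the definition.

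\emph{Binary cost.} For fixed $u\in[0,1)$ the kernel $x\mapsto(1-u^2x)^{-1}$ is continuous on $[0,1]$ with values in $[1,(1-u^2)^{-1}]$, so $g_{\mathfrak m_k}(u)\to g_{\mathfrak m}(u)$ pointwise. To pass the limit through $\int_0^1 u^{-2}\ln g\,du$ we apply dominated convergence, using the bounds
\[
 0\le\ln g_{\mathfrak m}(u)\le-\ln(1-u^2),
\]
which hold uniformly in $\mathfrak m$. Near $u=0$ the integrand needs more care, since $u^{-2}$ is not integrable there. We use $\ln g\le g-1=\int u^2x/(1-u^2x)\,d\mathfrak m$, which gives $u^{-2}\ln g\le(1-u^2)^{-1}$. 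The dominating function is therefore $u^{-2}\min\{-\ln(1-u^2),\,u^2(1-u^2)^{-1}\}$, which is integrable on $[0,1)$: near $0$ it is bounded by $(1-u^2)^{-1}$, and near $1$ it behaves like $-\ln(1-u)$.

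\emph{Spherical cost.} For fixed $r$ the kernel $x\mapsto\operatorname{sech}^2r/(\operatorname{sech}^2r+4x)$ is continuous on $[0,\infty)$ with values in $(0,1]$, so $T_{\mathfrak m_k}(r)\to T_{\mathfrak m}(r)$ pointwise. Here $-\ln T\ge0$, and the difficulty is that $-\ln T$ can be large when mass sits at large $x$. This is the main obstacle, because $I_{\mathrm S}$ is noncompact and narrow convergence allows mass to escape to infinity only in a tight way. The first attempt will be Jensen's inequality in the form
\[
 -\ln T_{\mathfrak m}(r)\le\int\ln(1+4x\cosh^2r)\,d\mathfrak m.
\]
This bound is not uniform in $\mathfrak m$ without a moment bound, so it cannot serve as a dominating function by itself. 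Two routes remain, and both exploit the weight $\operatorname{sech}^2r$. If $C_{\mathrm S}$ is only needed as a lower semicontinuous functional, for existence of minimizers, Fatou's lemma suffices. If full continuity is needed, the tail of the $r$-integral has to be controlled uniformly in $\mathfrak m$. The route I would try first is a direct uniform bound: since $T_{\mathfrak m}(r)\ge\int\operatorname{sech}^2 r/(1+4x)\,d\mathfrak m=\operatorname{sech}^2r\cdot c_{\mathfrak m}$ with $c_{\mathfrak m}=\int(1+4x)^{-1}\,d\mathfrak m>0$, we get
\[
 -\ln T_{\mathfrak m}(r)\le 2\ln\cosh r-\ln c_{\mathfrak m}.
\]
Narrow convergence gives $c_{\mathfrak m_k}\to c_{\mathfrak m}>0$, so these constants are bounded below along the sequence. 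The dominating function $\operatorname{sech}^2r\,(2\ln\cosh r+C)$ is integrable on $[0,\infty)$, and dominated convergence then yields $C_{\mathrm S}(\mathfrak m_k)\to C_{\mathrm S}(\mathfrak m)$.

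A final check is needed at the endpoints. The measure costs are defined by the integral formulas, which remain valid when $\mathfrak m$ has atoms at $x=1$ in the binary case or at $x=0$ in either case. No extra case analysis should be required beyond confirming that the pointwise bounds above hold there.
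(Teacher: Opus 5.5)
Your proof is correct and follows the paper's route: pointwise convergence of $g_{\mathfrak m}$ and $T_{\mathfrak m}$ plus dominated convergence. Your lower bound $T_{\mathfrak m}(r)\ge c_{\mathfrak m}\operatorname{sech}^2 r$, with $c_{\mathfrak m}=\int(1+4x)^{-1}\,d\mathfrak m$, does the job of the paper's choice of $M,c$ with $\mathfrak m_j([0,M])\ge c$, and is slightly cleaner because it uses a single bounded continuous test function. The extra care near $u=0$ in the binary case is unnecessary: since $-\ln(1-u^2)\le u^2/(1-u^2)$, the function $-\ln(1-u^2)/u^2$ is itself an integrable dominating function on $(0,1)$, which is the one the paper uses.
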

\begin{proof}
The correlation integrands are
bounded and continuous.  In the binary case, the integral
converges pointwise, and for $0<u<1$,
\[
 0\le\ln g_{\mathfrak m}(u)\le-\ln(1-u^2),
\]
whose upper bound divided by $u^2$ is integrable.  For spheres, if
$\mathfrak m_j\to\mathfrak m$ weakly, choose $M<\infty$ and $c>0$
such that $\mathfrak m_j([0,M])\ge c$ for all sufficiently large $j$.
Then $T_{\mathfrak m_j}(r)\to T_{\mathfrak m}(r)$ pointwise and
\[
 0\le-\ln T_{\mathfrak m_j}(r)
 \le-\ln c+\ln(1+4M\cosh^2r),
\]
which is integrable against $\operatorname{sech}^2r\,dr$.  Dominated
convergence proves cost continuity in both settings.
\end{proof}

Finite atomic probability measures are weakly dense in
$\mathcal P(I_\star)$, so by Lemma~\ref{lem:upper-continuity} every
measure admits finite atomic approximants whose costs and correlations
converge to its own.

\begin{lemma}[Measure upper bound]\label{lem:upper-approximation}
For $0<s<1$,
\[
 \ln2\,\overline R_D((1-s)/2)\le\mathcal U_{\mathrm B}(s),\qquad
 \overline R_D^{\mathbb S}(s)\le\mathcal U_{\mathrm S}(s).
\]
\end{lemma}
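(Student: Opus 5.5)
The plan is to reduce to the matrix bounds, Theorem~\ref{thm:horn-upper} and Theorem~\ref{thm:spherical-matrix-upper}, by approximating a near-optimal measure with finite atomic ones. Fix $0<s<1$ and $\star\in\{\mathrm B,\mathrm S\}$. If $\mathcal U_\star(s)=+\infty$ there is nothing to prove. Otherwise, fix $\epsilon>0$ and choose $\mathfrak m\in\mathcal P(I_\star)$ with $S_\star(\mathfrak m)\ge s$ and $C_\star(\mathfrak m)\le\mathcal U_\star(s)+\epsilon$.

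The matrix theorems need the strict inequality $S_\star>s$ (in the binary case $1-2\delta<S_{\mathrm B}$ with $\delta=(1-s)/2$), so the first step is to push the correlation strictly above $s$ at small cost. I would mix with a measure of larger correlation: $\mathfrak m_t=(1-t)\mathfrak m+t\delta_{x_0}$. In the binary case take $x_0=0$, which gives $\varphi_{\mathrm B}(0)=1$. In the spherical case take $x_0$ large, so that $\varphi_{\mathrm S}(x_0)$ is close to $1$. Since $s<1$ and $S_\star(\mathfrak m)\ge s$, we get $S_\star(\mathfrak m_t)>s$ for every $t>0$, provided $\varphi_\star(x_0)>s$. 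By Lemma~\ref{lem:upper-continuity}, $\mathfrak m_t\to\mathfrak m$ weakly as $t\to0$, so $C_\star(\mathfrak m_t)\to C_\star(\mathfrak m)$. Alternatively, convexity of the cost gives the bound $C_\star(\mathfrak m_t)\le(1-t)C_\star(\mathfrak m)+tC_\star(\delta_{x_0})$, and $C_\star(\delta_{x_0})=f_\star(x_0)$ is finite. Fix $t$ with $C_\star(\mathfrak m_t)\le\mathcal U_\star(s)+2\epsilon$.

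Next, use weak density of finite atomic measures together with Lemma~\ref{lem:upper-continuity}. This gives a finite atomic $\mathfrak m'\in\mathcal P(I_\star)$ with $S_\star(\mathfrak m')>s$ and $C_\star(\mathfrak m')\le\mathcal U_\star(s)+3\epsilon$. Since $\varphi_\star$ is continuous and the atoms can be chosen inside $I_\star$, no boundary issue arises. Write $\mathfrak m'=\sum_jw_j\delta_{x_j}$ with distinct atoms and realize it by the Markov--Krein correspondence: take $A=\operatorname{diag}(x_j)$ and $\gamma_j=\sqrt{w_j}$. Then $0\preceq A\preceq I$ in the binary case and $A\succeq0$ in the spherical case. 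The identities following \eqref{eq:binary-measure-statistics}--\eqref{eq:spherical-measure-statistics} give $C_\star(A,\gamma)=C_\star(\mathfrak m')$ and $S_\star(A,\gamma)=S_\star(\mathfrak m')$; these come from the determinant formulas for $g_{\mathfrak m}$ and $T_{\mathfrak m}$ and the scalar entropy integrals. Theorem~\ref{thm:horn-upper} then gives $\ln2\,\overline R_D((1-s)/2)\le C_{\mathrm B}(A,\gamma)\le\mathcal U_{\mathrm B}(s)+3\epsilon$. Likewise, Theorem~\ref{thm:spherical-matrix-upper} gives $\overline R_D^{\mathbb S}(s)\le\mathcal U_{\mathrm S}(s)+3\epsilon$. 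Letting $\epsilon\to0$ finishes the argument.

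The main point to check is the strict-inequality perturbation, together with the fact that the cost of the perturbed measure stays finite and close to the original. This is handled by the convexity bound $C_\star(\mathfrak m_t)\le(1-t)C_\star(\mathfrak m)+tf_\star(x_0)$, or directly by weak continuity. A second point is that the cost identity for atomic measures relies on the Cauchy-transform formula \eqref{eq:upper-markov-krein}. With the diagonal realization this is exact, because the atoms are distinct and $y$ is defined as the zero set of $G_{\mathfrak m'}$, so nothing degenerate needs separate treatment.
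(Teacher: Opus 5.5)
Your proposal is correct and follows essentially the paper's proof. You mix with $\delta_0$ (binary) or with $\delta_L$ where $\varphi_{\mathrm S}(L)>s$ (spherical) to make the correlation strict, control the cost by convexity, approximate by finite atomic measures using weak continuity, apply the matrix bounds, and then let the mixing weight vanish. The only differences are cosmetic: you fix a near-minimizer with $\epsilon$ slack and spell out the diagonal Markov--Krein realization, whereas the paper starts from an arbitrary feasible $\mathfrak m$ and takes the infimum at the end.
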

\begin{proof}
Suppose $S_\star(\mathfrak m)\ge s$.
Mix $\mathfrak m$ with $\varepsilon\delta_0$ in the binary case,
or with $\varepsilon\delta_L$ for a finite $L$ satisfying
$\varphi_{\mathrm S}(L)>s$ in the spherical case.  The resulting correlation
is strictly greater than $s$.  Convexity bounds its cost by
$(1-\varepsilon)C_\star(\mathfrak m)+\varepsilon f_\star(L)$
(with $L=0$ in the binary case).
By Lemma~\ref{lem:upper-continuity}, approximate this mixture by finite atomic
measures, preserving the strict inequality.  Apply the matrix bound
and then let $\varepsilon\downarrow0$.
Taking the infimum proves the assertion.
\end{proof}

\subsection{Optimization over measures and elliptic integrals}

We are now ready to solve the optimization problem over measures from the previous subsection and make our second contact with elliptic integrals.  Recall $K,K',E,E'$ from the introduction and the spacing
$\ell=\pi K'/K$ from Definition~\ref{def:elliptic-contacts}.  Put
\begin{equation}\label{eq:upper-elliptic-spacing}
 d_\ell=\frac{2K}\pi.
\end{equation}
To motivate the binary measure, one could write out KKT first-order optimality conditions and trust their global optimality since the underlying problem over measures is convex.  We will not do so fully and instead give a related ansatz.  Our measure must be supported on $I = [0,1]$, and we will assume that $\mathfrak{m}$ is a discrete measure whose support has a single accumulation point at $1$.  A priori, the function $g_{\mathfrak m}$ is holomorphic on the slit domain $\Omega = \mathbb{C} \setminus ((-\infty,-1] \cup [1,\infty))$, and we make the following substitution
\[
  H_{\mathfrak m}(r)=g_{\mathfrak m}(\tanh r)/\cosh r.
\]
We seek the simplest meromorphic map compatible with this function.

Under the assumptions above, $H_{\mathfrak m}$ has a meromorphic continuation and satisfies
\[
 H_{\mathfrak m}(-r)=H_{\mathfrak m}(r),\qquad
 H_{\mathfrak m}(r+\mathrm i\pi)=-H_{\mathfrak m}(r),\qquad
 H_{\mathfrak m}(0)=1.
\]
Writing $x=\tanh^2\xi$ with $\xi\ge0$, an atom at $x<1$
contributes to $H_{\mathfrak m}$ through the kernel
\begin{equation}\label{eq:upper-binary-kernel}
 \frac1{\cosh r(1-\tanh^2r\tanh^2\xi)}
 =\frac{\cosh\xi}{2}
   \bigl(\operatorname{sech}(r-\xi)+\operatorname{sech}(r+\xi)\bigr).
\end{equation}
Thus the poles of $H_{\mathfrak m}$ lie on $\Im r=\pi/2$
modulo $\mathrm i\pi$.
Supply the real period $\ell$, and seek a meromorphic map
\[
 H:E_\ell\longrightarrow\mathbb P^1,\qquad
 E_\ell=\mathbb C/(\ell\mathbb Z+2\pi\mathrm i\mathbb Z).
\]
The smallest possible degree of this (nonconstant) map is two.  We therefore ask for two simple poles at
$[\tau_{\rm lat}\ell+\mathrm i\pi/2]$ and
$[\tau_{\rm lat}\ell+3\mathrm i\pi/2]$; evenness forces
$\tau_{\rm lat}=0$ or $\frac12$ modulo $1$.

There is a unique measure that produces such a function $H$.  Periodicity makes the
residues equal (up to sign in the $\mathrm i\pi$ antiperiod).  This requires weights
proportional to $\operatorname{sech}((j+\tau_{\rm lat})\ell)$.
With that preparation, we are ready for the following one-parameter families of solutions to $\mathcal{U}_\star(s)$ in Definition~\ref{def:binary-measure-variational}.
\begin{definition}
Define the binary
``lattice measure'', for any $0\le\tau_{\rm lat}<1$, by
\begin{equation}\label{eq:upper-binary-lattice}
 \mathfrak m^{\mathrm B}_{\ell,\tau_{\rm lat}}
 =\frac1{Z_{\ell,\tau_{\rm lat}}}\sum_{j\in\mathbb Z}
 \operatorname{sech}((j+\tau_{\rm lat})\ell)
 \delta_{\tanh^2((j+\tau_{\rm lat})\ell)},\qquad
 Z_{\ell,\tau_{\rm lat}}=\sum_{j\in\mathbb Z}
 \operatorname{sech}((j+\tau_{\rm lat})\ell).
\end{equation}
\end{definition}
We also define a spherical measure arising from $\mathfrak{m}^{\mathrm B}_{\ell,1/2}$ by reweighting and pushing forward under the M\"obius map
\[
 M(x)=\frac{1-x}{4x},\qquad
 M(z)-M(x)=\frac{x-z}{4zx}.
\]
This map reverses order on $(0,1)$ and preserves interlacing.  For an
atomic probability measure $\mathfrak m=\sum_jp_j\delta_{x_j}$ with
$x_j>0$ and $A=\sum_jp_j/x_j<\infty$, put
\begin{equation}\label{eq:upper-mobius-transfer}
 \widetilde{\mathfrak m}=\sum_j\frac{p_j}{Ax_j}\delta_{M(x_j)},
 \qquad
 G_{\widetilde{\mathfrak m}}(M(z))=-\frac{4z}{A}G_{\mathfrak m}(z).
\end{equation}
Applying this to $\mathfrak m^{\mathrm B}_{\ell,1/2}$, with
$M(\tanh^2\xi)=(4\sinh^2\xi)^{-1}$, gives
\begin{equation}\label{eq:upper-spherical-lattice}
 \begin{gathered}
 \mathfrak m^{\mathrm S}_{\ell,1/2}=\sum_{j\ge0}w_j\delta_{x_j},\qquad
 \xi_j=(j+\tfrac12)\ell,\qquad x_j=\frac1{4\sinh^2\xi_j},\\
 w_j=\frac{\cosh\xi_j}{\mathcal N_\ell\sinh^2\xi_j},\qquad
 \mathcal N_\ell=\sum_{j\ge0}\frac{\cosh\xi_j}{\sinh^2\xi_j}.
 \end{gathered}
\end{equation}

\begin{lemma}[Evaluation of the lattice measures]\label{lem:upper-lattice}
For $0<\kappa<1$, let $\ell=\pi K'/K$ and $d_\ell=2K/\pi$.
The probability measures in~\eqref{eq:upper-binary-lattice}
and~\eqref{eq:upper-spherical-lattice} have normalization constants
\begin{equation}\label{eq:upper-lattice-normalizations}
 Z_{\ell,0}=d_\ell,\qquad
 Z_{\ell,1/2}=d_\ell\kappa,\qquad
 \mathcal N_\ell=\frac{d_\ell^2\kappa}{2}.
\end{equation}
For $r\in\mathbb R$, the two binary transforms are
\begin{equation}\label{eq:upper-lattice-transforms}
 \begin{aligned}
 H_{\mathfrak m^{\mathrm B}_{\ell,0}}(r)
   &=\operatorname{dn}(d_\ell r,\kappa'),\\
 H_{\mathfrak m^{\mathrm B}_{\ell,1/2}}(r)
   &=\operatorname{dn}(d_\ell r,\kappa')^{-1}.
 \end{aligned}
\end{equation}
Their correlations and costs are
\begin{align}
 S_{\mathrm B}(\mathfrak m^{\mathrm B}_{\ell,0})&=\frac{2E}\pi,
 &C_{\mathrm B}(\mathfrak m^{\mathrm B}_{\ell,0})&=\frac{\ln2}2+B(\kappa),
 \label{eq:upper-binary-plus-statistics}\\
 S_{\mathrm B}(\mathfrak m^{\mathrm B}_{\ell,1/2})
 &=\frac{2(E-\kappa'^2K)}{\pi\kappa},
 &C_{\mathrm B}(\mathfrak m^{\mathrm B}_{\ell,1/2})&=\frac{\ln2}2-B(\kappa),
 \label{eq:upper-binary-minus-statistics}\\
 S_{\mathrm S}(\mathfrak m^{\mathrm S}_{\ell,1/2})&=\frac{K-E}{\kappa K},
 &C_{\mathrm S}(\mathfrak m^{\mathrm S}_{\ell,1/2})&=R_{\mathrm S}(\kappa).
 \label{eq:upper-spherical-statistics}
\end{align}
\end{lemma}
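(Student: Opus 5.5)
The plan is to derive everything from the classical Fourier expansions of the Jacobi elliptic functions with complementary modulus, identified through the Poisson-summation form of the sech lattice. Put $\kappa$ as modulus and $\kappa'$ as complementary modulus, so that functions of modulus $\kappa'$ have quarter periods $K'$ (real) and $K$ (imaginary) with nome $q'=e^{-\pi K/K'}$.

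\emph{Step 1: the transforms.} Apply the kernel identity \eqref{eq:upper-binary-kernel} atom by atom, with $\xi=|j+\tau_{\rm lat}|\ell$. Summing over $j\in\mathbb Z$, and using the symmetric indexing, we obtain
\[
 Z_{\ell,\tau_{\rm lat}}\,H(r)=\sum_{j\in\mathbb Z}\operatorname{sech}\bigl(r-(j+\tau_{\rm lat})\ell\bigr),
\]
since $\cosh\xi\cdot\operatorname{sech}\xi=1$ cancels the weights. This is an $\ell$-periodic function of $r$ that is antiperiodic under $r\mapsto r+\mathrm i\pi$. Its only poles are simple, at $\mathrm i\pi/2+(j+\tau_{\rm lat})\ell$, so it is an elliptic function of degree two with periods $\ell$ and $2\pi\mathrm i$. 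Under the rescaling $u=d_\ell r=2Kr/\pi$ these periods become $2K'$ and $4\mathrm iK$. These are the periods of $\operatorname{dn}(\cdot,\kappa')$, whose poles sit at $\mathrm iK$ modulo the lattice; this matches $\tau_{\rm lat}=0$. For $\tau_{\rm lat}=1/2$, the poles shift by $K'$, which matches $1/\operatorname{dn}=\operatorname{dn}(u+K')/\kappa$. Liouville's theorem fixes each function up to a constant, and the normalization $H(0)=1$ then gives \eqref{eq:upper-lattice-transforms}. Comparing residues at the poles gives $Z_{\ell,0}=d_\ell$ and $Z_{\ell,1/2}=\kappa d_\ell$: near $u=\mathrm iK$ we have $\operatorname{dn}\sim -\mathrm i/(u-\mathrm iK)$, while $\operatorname{sech}$ contributes residue $-\mathrm i$ in $r$. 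Equivalently, these constants can be read off from the classical expansion $\sum\operatorname{sech}$ via the Fourier series of $\operatorname{dn}$.

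\emph{Step 2: correlations and costs.}

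For the correlation, write $\sqrt{1-\tanh^2\xi}=\operatorname{sech}\xi$. Then
\[
 S_{\mathrm B}=Z^{-1}\sum_j\operatorname{sech}^2\bigl((j+\tau_{\rm lat})\ell\bigr),
\]
which is again a lattice sum. It equals the constant term of the elliptic function $\operatorname{dn}^2$ (or $\kappa^2\operatorname{nd}^2$) after the same rescaling. The constant term comes from $\int_0^{2K'}\operatorname{dn}^2(u,\kappa')\,du=2E'$, transferred to the nome side. I would rather obtain it by averaging $H^2$ over a vertical period through Poisson summation, which converts $E'$-type averages into $E/K$ quantities. That yields $2E/\pi$ and $2(E-\kappa'^2K)/(\pi\kappa)$.

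For the cost, substitute $u=\tanh r$ in \eqref{eq:binary-measure-statistics}. This gives $du/u^2=dr/\sinh^2r$ and $\ln g=\ln H+\ln\cosh r$, so
\[
 C_{\mathrm B}=\ln2-\tfrac12\int_0^\infty\frac{\ln\cosh r+\ln\operatorname{dn}^{\pm1}(d_\ell r)}{\sinh^2r}\,dr.
\]
The $\ln\cosh$ part is elementary. For the $\ln\operatorname{dn}$ part, insert the Fourier series $\ln\operatorname{dn}(u,\kappa')=\tfrac12\ln\kappa+\ldots$. The series is the classical expansion of $\ln\operatorname{dn}$ in $\cos(n\pi u/K')$; note that the constant term is $\tfrac12\ln$ of the complementary modulus of $\kappa'$, namely $\kappa$. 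Integrate term by term against $\sinh^{-2}r$, using $\int_0^\infty(1-\cos ar)\sinh^{-2}r\,dr=\tfrac{\pi a}{2}\coth\tfrac{\pi a}2-1$. The resulting series should telescope into $\tfrac14\ln\kappa$ plus a Lambert-type series summing to $\kappa'^2KK'/(2\pi)$. The sign flips between $\operatorname{dn}$ and $1/\operatorname{dn}$, producing $\tfrac{\ln2}2\pm B$.

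\emph{Step 3: the spherical case.} Transfer everything through the Möbius map using \eqref{eq:upper-mobius-transfer}. The normalization $\mathcal N_\ell$ is $A\cdot Z_{\ell,1/2}$ up to the factor relating $p_j/x_j$ to $\cosh\xi_j/\sinh^2\xi_j$. Its value is obtained from the derivative of $1/\operatorname{dn}$ at the pole structure, or equivalently from the second-order term of $H$ at $r=0$, since $\sum p_j/x_j$ is a derivative of $g$ at $u=0$. Expanding $\operatorname{nd}$ to second order gives $\kappa d_\ell^2/2$. The correlation $\sqrt{4x/(1+4x)}$ at $x=1/(4\sinh^2\xi)$ equals $\operatorname{sech}\xi$, so $S_{\mathrm S}$ is a lattice sum of $\cosh\xi\,\operatorname{sech}\xi/\sinh^2\xi$ weights. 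This reduces to a second Laurent coefficient of $\operatorname{nd}^2$, giving $(K-E)/(\kappa K)$. For the cost, $T_{\widetilde{\mathfrak m}}$ expressed through $G_{\mathfrak m}$ becomes a ratio involving $\operatorname{dn}(d_\ell(r+\mathrm i\pi/2))$. Along the shifted line, $\operatorname{dn}$ becomes $\kappa'\operatorname{sn}$-type functions, and the same Fourier strategy against $\operatorname{sech}^2r$ applies.

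\emph{Main obstacle.} I expect the main difficulty to be the cost integrals in closed form: keeping the branch and normalization constants straight and summing the resulting $q$-series into $KK'$ and $E'$ combinations. I would first check them numerically against Tables~\ref{tab:binary-rates} and~\ref{tab:spherical-rates}. A fallback is to differentiate in $\kappa$ (equivalently in $\ell$) and match derivatives with those of $B$ and $R_{\mathrm S}$, using the Legendre relation. The boundary value at $\kappa\to0$ or $1$ is then fixed by the known half-rate and Cohn--Elkies limits.
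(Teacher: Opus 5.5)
Your plan works, and for the costs it takes a genuinely different route from the paper's. For the transforms and correlations you do essentially what the paper does. Liouville replaces Poisson summation plus Jacobi's Fourier series, and everything rests on $\sum_j\operatorname{sech}^2(r-j\ell)=d_\ell^2\bigl(\operatorname{dn}^2(d_\ell r,\kappa')+E/K-1\bigr)$; either of your averaging arguments fixes its constant. For the costs, the paper locates the interlacing zeros through $F(r+\ell/2)F(r)=d_\ell^2\kappa$, extends the finite cost identity to these infinite interlacing pairs by weak continuity, and evaluates the entropy lattice sums as $(1-\ell\partial_\ell)$ of logarithms of theta products in the nome $q=e^{-\ell}$.

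You instead integrate the measure cost directly in the Poisson-dual nome $q'=e^{-\pi^2/\ell}$, and this closes:
\begin{itemize}
\item Since $\int_0^\infty\ln\cosh r\,\sinh^{-2}r\,dr=\ln2$, one has $C_{\mathrm B}(\mathfrak m)=\frac{\ln2}2-\frac12\int_0^\infty\ln H_{\mathfrak m}(r)\,\sinh^{-2}r\,dr$, so $C_++C_-=\ln2$ is immediate.
\item Write $\ln\operatorname{dn}(d_\ell r,\kappa')=-\sum_{m\ \mathrm{odd}}c_m\bigl(1-\cos(2\pi mr/\ell)\bigr)$ with $c_m=4q'^m/\bigl(m(1-q'^{2m})\bigr)>0$; Tonelli then allows termwise integration.
\item Your formula for $\int_0^\infty(1-\cos ar)\sinh^{-2}r\,dr$ gives $\frac14\ln\kappa+\frac{2\pi K}{K'}\sum_{N\ \mathrm{odd}}\sigma_1(N)q'^N$.
\item The Lambert series you left open is Jacobi's $\sum_{N\ \mathrm{odd}}\sigma_1(N)q'^N=\theta_2(q')^4/16=\kappa'^2K'^2/(4\pi^2)$, which produces exactly $B(\kappa)$.
\end{itemize}
Your route needs neither the zeros nor the infinite-interlacing limit. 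The paper's route buys uniformity. In your spherical case the series becomes $\sum_{m\ \mathrm{odd}}q'^{2m}/(1-q'^{2m})^2$. It equals $K'\bigl((1+\kappa^2)K'-2E'\bigr)/(8\pi^2)$ only after Ramanujan's formulas for $P(q'^2)$ and $P(q'^4)$, i.e.\ an $E_2$ identity plus a Landen step.

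Four steps need repair as written.

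(i) $H(0)=1$ is just the definition of $Z_{\ell,\tau_{\rm lat}}$, so it cannot fix the additive constant left by Liouville. That constant vanishes because both sides are antiperiodic under $r\mapsto r+\mathrm i\pi$, using $\operatorname{dn}(u+2\mathrm iK,\kappa')=-\operatorname{dn}(u,\kappa')$. Matching the residue then gives $F(r)=d_\ell\operatorname{dn}(d_\ell r,\kappa')$ and $Z_{\ell,0}=F(0)=d_\ell$.

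(ii) $\sum_jp_j/x_j=\int x^{-1}\,d\mathfrak m$ is not a Taylor coefficient of $g$ at $u=0$. Expanding $\operatorname{nd}$ to second order at $r=0$ gives the first moment of $\mathfrak m^{\mathrm B}_{\ell,1/2}$, namely $\frac12+\frac12\kappa'^2d_\ell^2$, not $\mathcal N_\ell$. The negative moment sits at $u=\infty$, i.e.\ $r=\mathrm i\pi/2$, where $H$ has a simple zero with $H'(\mathrm i\pi/2)=\mathrm i\int x^{-1}\,d\mathfrak m$. For $H=\operatorname{nd}(d_\ell r,\kappa')$ this is $\mathrm i d_\ell$, whence $\mathcal N_\ell=\frac12d_\ell Z_{\ell,1/2}$. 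So your first suggestion is the right one, and it matches the paper's computation of $F'(\ell/2+\mathrm i\pi/2)$.

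(iii) $\sum_{j\ge0}\operatorname{csch}^2\xi_j$ is not a Laurent coefficient of $\operatorname{nd}^2$. It is the value of the $\operatorname{sech}^2$ identity above at $r=\ell/2+\mathrm i\pi/2$, where $\operatorname{dn}(K'+\mathrm iK,\kappa')=0$. This gives $2\sum_{j\ge0}\operatorname{csch}^2\xi_j=d_\ell^2(1-E/K)$.

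(iv) No shifted argument appears in the spherical cost. By \eqref{eq:upper-mobius-transfer}, or directly from $\sinh^2\xi+\cosh^2r=\cosh(r-\xi)\cosh(r+\xi)$, one gets $T_{\widetilde{\mathfrak m}}(r)=\operatorname{nd}(d_\ell r,\kappa')/(d_\ell\cosh r)$ for real $r$. Hence $C_{\mathrm S}=\frac12+\frac12\ln\frac K\pi+\frac12\int_0^\infty\operatorname{sech}^2r\,\ln\operatorname{dn}(d_\ell r,\kappa')\,dr$. Using $\int_0^\infty\operatorname{sech}^2r\cos(ar)\,dr=\frac{\pi a/2}{\sinh(\pi a/2)}$, your Fourier strategy turns the last term into $\frac14\ln\kappa+\frac{4\pi K}{K'}\sum_{m\ \mathrm{odd}}q'^{2m}/(1-q'^{2m})^2$. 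That is the series identified above, which yields $R_{\mathrm S}(\kappa)$.
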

\begin{proof}
Write $d=2K/\pi$ and $q=e^{-\ell}$.  Poisson summation and Jacobi's
Fourier series~\cite[Eq.~22.11.3]{DLMF} give
\[
 F(r):=\sum_{j\in\mathbb Z}\operatorname{sech}(r-j\ell)
       =d\operatorname{dn}(dr,\kappa').
\]
The half-period identity is
\[
 \operatorname{dn}(u+K',\kappa')
 =\frac{\kappa}{\operatorname{dn}(u,\kappa')},\qquad
 F(r+\ell/2)F(r)=d^2\kappa.
\]
Pairing the terms at $a$ and $-a$ using~\eqref{eq:upper-binary-kernel}
gives~\eqref{eq:upper-lattice-transforms} and the normalizing factors
in~\eqref{eq:upper-lattice-normalizations}.
\cite[Eq.~22.11.14]{DLMF} gives
\[
 \sum_{j\in\mathbb Z}\operatorname{sech}^2(r-j\ell)
 =d^2\left(\operatorname{dn}^2(dr,\kappa')+\frac EK-1\right).
\]
Evaluation at $r=0$ and $r=\ell/2$ gives the binary correlations.
Evaluation at $r=\ell/2+\mathrm i\pi/2$ gives
\[
 2\sum_{j\ge0}\operatorname{csch}^2\xi_j=d^2(1-E/K).
\]
The half-period identity gives the derivative
\[
 F'(\ell/2+\mathrm i\pi/2)
 =2\mathrm i\sum_{j\ge0}\frac{\cosh\xi_j}{\sinh^2\xi_j}
 =\mathrm i d^2\kappa,
\]
using the fact that $-\mathrm i$ is the residue of $F$ at $\mathrm i\pi/2$.  Thus $\mathcal N_\ell=d^2\kappa/2$, and the correlation is
\[
 S_{\mathrm S}(\mathfrak m^{\mathrm S}_{\ell,1/2})
 =\frac1{\mathcal N_\ell}\sum_{j\ge0}\operatorname{csch}^2\xi_j
 =\frac{K-E}{\kappa K}.
\]

For the costs, write $C_+$ and $C_-$ for the binary costs at shifts
$0$ and $\frac12$, and $C_{\mathrm S}$ for the spherical cost.
To evaluate the costs, we must find the $y_j$ interlacing the $x_i$,
which are the zeros, resp.~the poles, of the Cauchy transform $G_{\mathfrak m}(z)$.
The half-period identity $F(r+\ell/2)F(r)=d^2\kappa$ places the zeros
of $F$ halfway between its poles.  The two binary transforms are
$F/d$ and $d/F$, so the poles associated to $+$ are the zeroes associated to $-$ and vice versa.
The spherical zeros are the images of the half-shifted binary zeros under $M$
by~\eqref{eq:upper-mobius-transfer}.

The cost identity~\eqref{eq:upper-matrix-statistics} extends to these
infinite interlacing pairs and gives the three entropy sums
\begin{align}
 C_+&=\sum_{n\ge0}f_{\mathrm B}(\tanh^2(n\ell))
       -\sum_{n\ge0}f_{\mathrm B}(\tanh^2((n+\tfrac12)\ell)),
 \label{eq:upper-raw-binary-plus-cost}\\
 C_-&=\sum_{n\ge0}f_{\mathrm B}(\tanh^2((n+\tfrac12)\ell))
       -\sum_{n\ge1}f_{\mathrm B}(\tanh^2(n\ell)),
 \label{eq:upper-raw-binary-minus-cost}\\
 C_{\mathrm S}&=\sum_{n\ge0}f_{\mathrm S}((4\sinh^2((n+\tfrac12)\ell))^{-1})
       -\sum_{n\ge1}f_{\mathrm S}((4\sinh^2(n\ell))^{-1}).
 \label{eq:upper-raw-spherical-cost}
\end{align}
All sums are absolutely convergent by the exponential decay of the
scalar entropies.  Applying the finite cost identity to finite atomic
approximations and using weak continuity
from Lemma~\ref{lem:upper-continuity} identifies them with the measure costs.

It remains to evaluate these sums.  The entropy identities are
\[
 f_{\mathrm B}(\tanh^2a)=(1-a\partial_a)\ln(1+e^{-2a}),\qquad
 f_{\mathrm S}((4\sinh^2a)^{-1})=-(1-a\partial_a)\ln(1-e^{-2a}).
\]
These are the fermionic and bosonic single-mode entropies at energy
$2a$.  On a lattice $a=c\ell$, define the differential operator
$D=1-\ell\partial_\ell$.  The difference of the integer and half-integer lattice sums
is $D$ applied to logarithms of partition-function ratios:
\[
 P_+=\prod_{n\ge1}\frac{1+q^{2n}}{1+q^{2n-1}},\qquad
 P_{\mathrm S}=\prod_{n\ge1}\frac{1-q^{2n}}{1-q^{2n-1}}.
\]
Explicitly,
\begin{align*}
 C_+&=\ln2+D\ln P_+,\qquad C_-=\ln2-C_+,\\
 C_{\mathrm S}&=D\ln P_{\mathrm S}.
\end{align*}
We remark that the products $P_+$ and $P_{\mathrm S}$ converge locally uniformly for $\ell>0$, so the differentiation is well-defined.

Write $\theta_j=\theta_j(0,q)$ for the values of Jacobi theta functions at
$q=e^{-\ell}$.  Their product formulas~\cite[Eqs.~20.4.3--20.4.4]{DLMF} are
\[
 \theta_2=2q^{1/4}\prod_{n\ge1}(1-q^{2n})(1+q^{2n})^2,
 \qquad
 \theta_3=\prod_{n\ge1}(1-q^{2n})(1+q^{2n-1})^2.
\]
Taking their ratio and product, respectively, gives
\[
 P_+^2=\frac{\theta_2}{2q^{1/4}\theta_3},\qquad
 P_{\mathrm S}^2=\frac{\theta_2\theta_3}{2q^{1/4}}.
\]
The theta expressions for the elliptic modulus and
period~\cite[Eqs.~20.9.1--20.9.2]{DLMF} are
\[
 \kappa=\frac{\theta_2^2}{\theta_3^2},\qquad
 K=\frac\pi2\theta_3^2.
\]
Substituting the two product ratios therefore gives
\[
 \kappa=4q^{1/2}P_+^4,\qquad
 \frac{K\sqrt\kappa}{\pi}=q^{1/4}P_{\mathrm S}^2.
\]
Since $D\ln q=0$, it follows that
\[
 C_+=\frac12\ln2+\frac14D\ln\kappa,\qquad
 C_{\mathrm S}=\frac12D\ln\frac{K\sqrt\kappa}{\pi}.
\]
Finally, we take derivatives~\cite[\S19.4(i)]{DLMF}
\[
 \frac{d\ell}{d\kappa}=-\frac{\pi^2}{2\kappa\kappa'^2K^2},\qquad
 \frac{dK}{d\kappa}=\frac{E}{\kappa\kappa'^2}-\frac K\kappa,
\]
and hence
\begin{align*}
 C_\pm&=\frac12\ln2
       \pm\left(\frac14\ln\kappa+\frac{\kappa'^2KK'}{2\pi}\right)
       =\frac12\ln2\pm B(\kappa),\\
 C_{\mathrm S}&=\frac12\ln\frac{K\sqrt\kappa}{\pi}
       +\frac{(2E-\kappa'^2K)K'}{2\pi}=R_{\mathrm S}(\kappa).
\end{align*}
The last equality follows from Legendre's relation.
\end{proof}

Since $s=1-2\delta$, the binary evaluations in
Lemma~\ref{lem:upper-lattice} are the two formulas
\eqref{eq:binary-profile}; the spherical evaluation is
\eqref{eq:spherical-profile}.  As $\ell\downarrow0$, both binary
measures tend to the arcsine law on $[0,1]$, with correlation $2/\pi$
and cost $(\ln2)/2$.

\begin{proof}[Proof of Theorem~\ref{thm:main}]
Use correlation as the common parameter, and put
$F_{\mathrm B}(s)=\ln2\,\mathcal J((1-s)/2)$.
Theorem~\ref{thm:elliptic-obstacle} identifies the contact threshold
at rate $F_\star(s)$ with $s$.
For $0<s'<s$, Theorem~\ref{thm:barrier} therefore bounds the lower
asymptotic rate at correlation $s$ from below by $F_\star(s')$.
Letting $s'\uparrow s$ gives the lower bound $F_\star(s)$.
For $s<s''<1$, choose the lattice measure of correlation $s''$.
Lemmas~\ref{lem:upper-approximation} and~\ref{lem:upper-lattice}
give an upper bound $F_\star(s'')$.  Letting $s''\downarrow s$
gives the reverse inequality.  The rate functions are continuous, including the
``half-rate point'' $s=2/\pi$: both branches tend to $(\ln2)/2$ because
$B(\kappa)\to0$ as $\kappa\uparrow1$.
Thus the liminf and limsup agree.  Dividing the binary equality by
$\ln2$ proves the theorem in bits.
\end{proof}

\begin{corollary}[Optimality in the measure problem]
For $0<s<1$, $\mathcal U_\star(s)=F_\star(s)$.
The displayed lattice measures attain this value; at distance $1/2-1/\pi$,
a minimizing measure is the arcsine law
$dx/(\pi\sqrt{x(1-x)})$ on $[0,1]$.
\end{corollary}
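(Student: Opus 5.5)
The plan is a sandwich argument: show $F_\star(s)\le\mathcal U_\star(s)\le F_\star(s)$ on $0<s<1$, and use the lattice measures to realize the upper bound exactly. Throughout, $F_{\mathrm B}(s)=\ln2\,\mathcal J((1-s)/2)$ as in the proof of Theorem~\ref{thm:main}.

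\emph{Lower bound.} Lemma~\ref{lem:upper-approximation} gives $\ln2\,\overline R_D((1-s)/2)\le\mathcal U_{\mathrm B}(s)$ and $\overline R_D^{\mathbb S}(s)\le\mathcal U_{\mathrm S}(s)$. The lower half of the proof of Theorem~\ref{thm:main} uses only Theorems~\ref{thm:barrier} and~\ref{thm:elliptic-obstacle} together with continuity of $F_\star$. It shows $\underline R\ge F_\star(s)$ in the appropriate units, and since $\overline R\ge\underline R$ we get $\mathcal U_\star(s)\ge F_\star(s)$.

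\emph{Upper bound.} As $\kappa$ runs over $(0,1)$, the correlations in Lemma~\ref{lem:upper-lattice} sweep the required ranges:
\begin{itemize}
\item $2E/\pi$ covers $(2/\pi,1)$;
\item $2(E-\kappa'^2K)/(\pi\kappa)$ covers $(0,2/\pi)$;
\item $(K-E)/(\kappa K)$ covers $(0,1)$.
\end{itemize}
By the monotonicity shown in the completion of Theorem~\ref{thm:elliptic-obstacle}, each sweep is a bijection. So for each $s$ (other than the binary value $s=2/\pi$) there is a $\kappa$ and a lattice measure with $S_\star(\mathfrak m)=s$ exactly. Its cost is $F_\star(s)$ by \eqref{eq:binary-profile}, \eqref{eq:spherical-profile}, and the identifications $R_\pm=\ln2\,\mathcal J(\delta_\pm)$ and $R_{\mathrm S}=F_{\mathrm S}(s(\kappa))$. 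This measure is feasible in \eqref{eq:upper-unified-measure-variational}, so $\mathcal U_\star(s)\le F_\star(s)$ and the infimum is attained.

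The binary measures must lie in $\mathcal P([0,1])$. This holds because the atoms $\tanh^2$ lie in $[0,1)$. The spherical measure lies in $[0,\infty)$.

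\emph{The point $s=2/\pi$.} Take the arcsine law. Its correlation is
\[
\int\sqrt{1-x}\,\frac{dx}{\pi\sqrt{x(1-x)}}=\frac1\pi\int_0^1x^{-1/2}\,dx=\frac2\pi .
\]
For its cost, the paper notes that both binary lattice measures converge weakly to the arcsine law as $\ell\downarrow0$, with costs tending to $(\ln 2)/2$. Lemma~\ref{lem:upper-continuity} then gives cost $(\ln2)/2=F_{\mathrm B}(2/\pi)$, matching $\mathcal J(\tfrac12-\tfrac1\pi)=\tfrac12$ in bits.

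\emph{Main obstacle.} The only step needing care is this weak limit. I would verify it directly. The atoms $\tanh^2(j\ell)$ carry weights $\ell\,\mathrm{sech}(j\ell)/(\ell Z)$, and $\ell Z\to\pi$ is a Riemann sum for $\int\mathrm{sech}=\pi$. Pushing $\mathrm{sech}(\xi)\,d\xi/\pi$ on $\mathbb R$ forward by $x=\tanh^2\xi$ gives exactly the arcsine density: with $dx=2\tanh\xi\,\mathrm{sech}^2\xi\,d\xi$ and the factor $2$ for $\pm\xi$, one gets $dx/(\pi\sqrt{x(1-x)})$. If this computation were troublesome, one could instead compute the cost directly: $g(u)=(1-u^2)^{-1/2}$ gives $C_{\mathrm B}=\ln2-\tfrac14\int_0^1-\ln(1-u^2)u^{-2}\,du=\ln2-\tfrac14\cdot2\ln2$.
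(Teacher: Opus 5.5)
Your proposal is correct and follows the paper's own proof. The lower bound comes from Lemma~\ref{lem:upper-approximation} combined with Theorem~\ref{thm:main}, and the upper bound from the lattice evaluation in Lemma~\ref{lem:upper-lattice}. At the junction you use the arcsine law with $g(u)=(1-u^2)^{-1/2}$, correlation $2/\pi$ and cost $(\ln2)/2$, exactly as the paper does; your extra checks (that the correlation sweeps are bijections, and the weak-limit computation) fill in details the paper leaves implicit.
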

\begin{proof}
The approximation lemma and Theorem~\ref{thm:main} give
$F_\star(s)\le\mathcal U_\star(s)$, and the lattice evaluation gives
the opposite inequality.  At the junction, the arcsine law has
$g(u)=(1-u^2)^{-1/2}$, correlation $2/\pi$, and cost $(\ln2)/2$.
\end{proof}

\section*{Acknowledgements}
We thank Alex Samorodnitsky, Alexander Barg, and Henry Cohn for helpful comments.

\bibliographystyle{alpha}
\begingroup
\raggedright
\bibliography{references}
\endgroup

\end{document}